\newtheorem{theorem}{Theorem}[section]
\newtheorem{corollary}{Corollary}
\newtheorem{lemma}[theorem]{Lemma}
\newtheorem{proposition}{Proposition}
\theoremstyle{definition}
\newtheorem{definition}[theorem]{Definition}
\newtheorem{remark}{Remark}
\DeclareMathAlphabet{\mathpzc}{OT1}{pzc}{m}{it}
\newcommand{\Uspace}{\U }
\newcommand{\intT}{\int_0^T }
\newcommand{\gr}{>}
\newcommand{\mi}{<}
\def\half{\mbox{$\frac{1}{2}$}}
\def\supp{\mathop{\rm supp}}
\newcommand{\ddt}{\frac{\rm d}{{\rm d} t} }
\def\rh{\hat{r}}
\def\uh{\hat{u}}
\def\wh{\hat{w}}
\def\xh{\hat{x}}
\def\yh{\hat{y}}
\def\Th{\hat{T}}
\def\hb{\bar{h}}
\def\vb{\bar{v}}
\def\yb{\bar{y}}
\def\zb{\bar{z}}
\def\A{\mathcal{A}}
\def\C{\mathcal{C}}
\def\L{\mathcal{L}}
\def\P{\mathcal{P}}
\def\Q{\mathcal{Q}}
\def\R{\mathcal{R}}
\def\U{\mathcal{U}}
\def\W{\mathcal{W}}
\def\X{\mathcal{X}}
\def\Y{\mathcal{Y}}
\newcommand{\cR}{I\!\! R}
\newcommand{\cN}{I\!\!N}
\newcommand\be{\begin{equation}}
\newcommand\ee{\end{equation}}
\newcommand{\benl}{\begin{equation*}}
\newcommand{\eenl}{\end{equation*}}
\newcommand\ba{\begin{array}}
\newcommand\ea{\end{array}}
\newcommand{\bean}{\begin{eqnarray*}}
\newcommand{\eean}{\end{eqnarray*}}
\newcommand{\bs}{\bigskip}
\def\ds{\displaystyle}
\newcommand{\dtt}{\mathrm{d}t}
\newcommand{\mr}{\mathrm}
\newcommand{\lam}{[\lambda]}
\title[Quadratic order conditions for bang-singular
extremals]{Quadratic order conditions \\ for bang-singular
extremals}
\author[M.S. Aronna, J.F. Bonnans, A.V. Dmitruk and P.A. Lotito]{}
\subjclass{Primary: 49K15.}
 \keywords{optimal control, second order condition, control constraint, singular arc, bang-singular solution.}
 \email{aronna@cmap.polytechnique.fr}
 \email{frederic.bonnans@inria.fr}
 \email{avdmi@cemi.rssi.ru}
 \email{plotito@exa.unicen.edu.ar}
\thanks{The first two authors are supported by the European Union under the 7th Framework Programme «FP7-PEOPLE-2010-ITN»  Grant agreement number 264735-SADCO}
\begin{document}
\maketitle

\centerline{\scshape M. Soledad Aronna }
\medskip
{\footnotesize
 \centerline{CONICET CIFASIS, Argentina}
   \centerline{INRIA Saclay - CMAP Ecole Polytechnique}
   \centerline{Route de Saclay,  91128 Palaiseau, France}
} 

\medskip

\centerline{\scshape J. Fr\'ed\'eric Bonnans}
\medskip
{\footnotesize
    \centerline{INRIA Saclay - CMAP Ecole Polytechnique}
\centerline{Route de Saclay, 91128 Palaiseau, France}
  }

\medskip

\centerline{\scshape Andrei V. Dmitruk}
\medskip
{\footnotesize
    \centerline{Russian Academy of Sciences - CEMI and Moscow State University}
 \centerline{47 Nakhimovsky Prospect, 117418 Moscow, Russia}
}

\medskip

\centerline{\scshape Pablo A. Lotito}
\medskip
{\footnotesize
    \centerline{CONICET PLADEMA - Univ. Nacional de Centro de la Prov. de Buenos Aires}
\centerline{Campus Universitario Paraje Arroyo Seco, B7000 Tandil, Argentina}
}

\bigskip


\begin{abstract}
This paper deals with optimal control problems for systems affine in the control variable. We consider nonnegativity
  constraints on the control, and finitely many equality and inequality constraints on the final state.
First, we obtain second order necessary optimality conditions. Secondly, we derive a second order sufficient condition for the scalar control case.
\end{abstract}

\section{Introduction}

In this article we obtain second order conditions for an
optimal control problem affine in the control. First we consider a pointwise nonnegativity constraint on the control,
end-point state constraints and a fixed time interval. Then we extend the result to bound constraints on the control, initial-final state constraints and problems involving parameters.
We do not assume that the  multipliers are unique. We study weak and Pontryagin minima.

There is already an important
literature on this subject. The case without control constraints, i.e. when the extremal is totally singular, has been extensively studied since the mid 1960s.
Kelley in \cite{Kel64} treated the scalar control case and presented a  necessary condition involving the second order derivative of the switching function. The result was extended by Kopp and  Moyer \cite{KopMoy65} for higher order derivatives, and in \cite{KelKopMoy67} it was shown that the order had to be even.
Goh in \cite{Goh66a} proposed a special change of variables obtained via a linear ODE and in \cite{Goh66} used this transformation to derive a necessary condition for the vector control problem. An extensive survey of these articles can be found in
Gabasov and Kirillova \cite{GabKir72}.
Jacobson and Speyer in \cite{JacSpe71}, and together with Lele in \cite{JacLelSpe71} obtained necessary conditions by adding a penalization term to the cost functional.
Gabasov and Kirillova \cite{GabKir72}, Krener \cite{Kre77}, Agrachev and Gamkrelidze \cite{AgrGam76} obtained
a countable series of necessary conditions that in fact use the idea behind the Goh transformation. Milyutin in \cite{Mil81} discovered an abstract essence of this approach and obtained
even stronger necessary conditions.
In \cite{AgrSac}  Agrachev and Sachkov investigated second order optimality conditions of the minimum time problem of a single-input system.
The main feature of this kind of problem, where the control enters linearly, is that the corresponding second variation does not contain the Legendre term, so the methods of the classical calculus of variations are not applicable for obtaining sufficient conditions. This is why the literature was mostly devoted to necessary conditions, which are actually a consequence of the nonnegativity of the second variation.
A sufficient condition for time optimality was given by  Moyer \cite{Moy73} for a system with a scalar control variable and fixed endpoints.
On the other hand,  Goh's transformation above-mentioned allows one to convert the second variation into another functional that hopefully turns out to be coercive with respect to the $L_2-$norm of some state variable.
Dmitruk in \cite{Dmi77} proved that this coercivity is a sufficient condition for the weak optimality, and presented
a closely related necessary condition. He used the abstract approach developed by Levitin, Milyutin and Osmolovskii in \cite{LMO}, and considered finitely many inequality and equality constraints on the endpoints and the possible existence of several multipliers.
In \cite{Dmi83,Dmi87} he also obtained necessary and sufficient
conditions for this norm, again closely related, for  Pontryagin minimality.
More recently, Bonnard et al. in \cite{BCT05b} provided second order sufficient conditions for the minimum time problem of a single-input system in terms of the existence of a conjugate time.

On the other hand, the case with linear control constraints and a ``purely'' bang-bang control without singular subarcs has been extensively  investigated over the past 15 years.
Milyutin and Osmolovskii in \cite{MilOsm98} provided necessary and sufficient conditions based on the general theory of \cite{LMO}.
Osmolovskii in \cite{Osm04} completed some of the proofs of the latter article.
Sarychev in \cite{Sar97} gave first and second order  sufficient condition for Pontryagin solutions.
Agrachev, Stefani, Zezza \cite{ASZ02} reduced the problem to a finite dimensional problem with the switching instants as variables and obtained a sufficient condition for strong optimality. The result was recently extended by Poggiolini and Spadini in \cite{PogSpa11}.
 On the other hand, Maurer and Osmolovskii in \cite{MauOsm03,MauOsm03b} gave a second order sufficient condition that is suitable for practical verifications and presented a numerical procedure that allows to verify the positivity of certain quadratic forms.
Felgenhauer in \cite{Fel03,Fel04,Fel05} studied both second order optimality conditions and sensitivity of the optimal solution.

The mixed case, where the control is partly bang-bang, partly singular was studied in \cite{PogSte05} by
Poggiolini and Stefani. They obtained a second order sufficient condition with an additional geometrical hypothesis (which is not  needed here) and claimed that it is not clear whether  this hypothesis is `almost necessary', in the sense that it is not obtained straightforward from a necessary condition by strengthening an inequality. In \cite{PogSte07,PogSte08} they derived a second order sufficient condition for the special case of a time-optimal problem.
The main result of the present article is to provide a sufficient condition that is `almost necessary'
for bang-singular extremals in a general Mayer problem.

On the other hand, the single-input time-optimal problem was extensively studied by means of synthesis-like methods.
See, among others, Sussmann \cite{Sus87,Sus87b,Sus87c}, Sch\"attler \cite{Sch91} and Sch\"attler-Jankovic \cite{SchJan93}. Both bang-bang and bang-singular structures were analysed in these works.

The article is organized as follows. In the second
section we present the problem and give basic
definitions. In the third section we perform a second
order analysis. More precisely, we obtain the second variation of the
Lagrangian functions and a necessary condition.
Afterwards, in the fourth section, we present the Goh transformation and a new necessary condition in the transformed variables. In the fifth section we show a sufficient condition for scalar control. Finally, we give an example with a scalar control
where the second order sufficient condition can be verified. The appendix is devoted to a series of technical properties that are used to prove the main results.

\section{Statement of the problem and assumptions}

\subsection{Statement of the problem} Consider the spaces
$\Uspace:=L_{\infty}(0,T;\mathbb{R}^m)$ and
$\mathcal{X}:=W_{\infty}^1(0,T;\mathbb{R}^n)$ as control
and state spaces, respectively. Denote with $u$ and $x$ their elements, respectively.  When needed, put $w=(x,u)$ for a point in $\mathcal{W}:=\mathcal{X}\times \Uspace.$
In this paper we investigate the optimal control problem
\begin{align}
 &\label{chap1cost} J:=\varphi_0(x(T))\rightarrow \min,\\
 &\label{chap1stateeq}\dot{x}(t)=\sum_{i=0}^mu_if_i(x),\quad
x(0)=x_0,\\
 &\label{chap1controlcons} u(t)\geq 0,\ \mr{a.e.}\ \mr{on}\
t\in [0,T],\\
 &\label{chap1finalcons}  \varphi_i(x(T))\leq 0,\ \mathrm{for}\
i=1,\hdots,d_{\varphi},\quad \eta_j(x(T))=0,\ \mathrm{for}\
j=1\hdots,d_{\eta}.
\end{align}
where $f_i:\cR^n\rightarrow
\cR^n$ for $i=0,\hdots,m,$
$\varphi_i:\cR^n\rightarrow \cR$  for $i=0,\hdots,d_{\varphi},$
$\eta_j:\cR^n\rightarrow \cR$ for $j=1,\hdots,d_{\eta}$ and $u_0\equiv 1.$ Assume that data functions $f_i$ are twice continuously
differentiable. Functions $\varphi_i$ and $\eta_j$ are
assumed to be twice differentiable.

A \textit{trajectory} is an element $w\in \mathcal{W}$
that satisfies the state equation \eqref{chap1stateeq}. If, in
addition, constraints \eqref{chap1controlcons} and \eqref{chap1finalcons}
hold, we say that $w$ is a \textit{feasible point} of the
problem \eqref{chap1cost}-\eqref{chap1finalcons}. Denote by
$\mathcal{A}$ the \textit{set of feasible points.}
A \textit{feasible variation} for $\wh\in\A$ is an element
$\delta w\in \W$ such that $\wh+\delta w\in\A.$

\begin{definition}
A pair $w^0=(x^0,u^0)\in\W$ is said to be a \textit{weak minimum} of problem \eqref{chap1cost}-\eqref{chap1finalcons} if there exists an  $\varepsilon>0$ such that the cost function attains at $w^0$ its minimum on the set
\benl
\left\{w=(x,u)\in \mathcal{A}:\|x-{x}^0\|_{\infty}<\varepsilon,\|u-u^0\|_{\infty}<\varepsilon\right\}.
\eenl
We say $w^0$ is a \textit{Pontryagin minimum} of problem \eqref{chap1cost}-\eqref{chap1finalcons} if, for any positive $N,$ there exists an $\varepsilon_N>0$ such that $w^0$ is a minimum point on the set
\benl
\left\{w=(x,u)\in \mathcal{A}:\|{x}-{x}^0\|_{\infty}<\varepsilon_N,\|u-u^0\|_{\infty}\leq N,\|u-u^0\|_1<\varepsilon_N\right\}.
\eenl
\end{definition}

Consider $\lambda=(\alpha,\beta,\psi)\in\cR^{d_{\varphi}+1,*}\times \cR^{d_{\eta},*}\times W_{\infty}^1(0,T;\cR^{n,*}),$ i.e. $\psi$ is a Lipschitz-continuous function with values in  the $n-$dimensional space of row-vectors with real components $\cR^{n,*}.$
Define the \textit{pre-Hamiltonian} function
\benl
H[\lambda](x,u,t):=\psi(t)\sum_{i=0}^mu_if_i(x),
\eenl
the \textit{terminal Lagrangian} function
\benl
\ell[\lambda](q):=\sum_{i=0}^{d_{\varphi}} \alpha_i\varphi_i(q)+\sum_{j=1}^{d_{\eta}}\beta_j \eta_j(q),
\eenl
and the \textit{Lagrangian} function
\be\label{chap1lagrangian}
\Phi[\lambda](w):= \ell[\lambda](x(T))+\int_0^{T} \psi(t)\left(\sum_{i=0}^{m}u_i(t)f_i(x(t))-\dot x(t)\right)\dtt.
\ee

In this article the optimality of a given feasible trajectory $\wh=(\xh,\uh)$ is studied. Whenever some argument of $f_i,$ $H,$ $\ell,$ $\Phi$ or their derivatives is omitted, assume that they are evaluated over this trajectory. Without loss of generality suppose that
\be
\varphi_i(\xh(T))=0,\ \mr{for}\ \mr{all}\
i=0,1,\hdots,d_{\varphi}.
\ee

\subsection{First order analysis}

\begin{definition}
Denote by $\Lambda\subset \cR^{d_{\varphi}+1,*}\times \cR^{d_{\eta},*}\times W_{\infty}^1(0,T;\cR^{n,*})$
the \textit{set of Pontryagin multipliers} associated with $\wh$ consisting of the elements $\lambda=(\alpha,\beta,\psi)$  satisfying the \textit{Pontryagin Maximum Principle,} i.e. having the following properties:
\begin{align}
\label{chap1nontriv}&|\alpha|+|\beta|=1,\\
&\label{chap1alphapos}\alpha=(\alpha_0,\alpha_1,\hdots,\alpha_{d_{\varphi}})\geq 0,
\end{align}
function $\psi$ is solution of the \textit{costate equation} and satisfies the \textit{transversality condition} at the endpoint $T,$ i.e.
\be\label{chap1costateeq}
-\dot{\psi}(t)=H_x[\lambda](\xh(t),\uh(t),t),\ \psi(T)=\ell'[\lambda](\xh(T)),
\ee
and the following \textit{minimum condition} holds
\be\label{chap1maxcond}
H[\lambda](\xh(t),\uh(t),t)=\min_{v\geq 0}H[\lambda](\xh(t),v,t), \ \mr{a.e.}\ \mr{on}\ [0,T].
\ee
\end{definition}

\begin{remark} \label{chap1Hucont}
For every $\lambda\in\Lambda,$ the following two conditions hold.
\begin{itemize}
\item[(i)] $H_{u_i}\lam(t)$ is continuous in time,
\item[(ii)] $H_{u_i}[\lambda](t)\geq 0,$ a.e. {on} $[0,T].$
\end{itemize}
\end{remark}

Recall the following well known result for which a proof can be found e.g. in  Alekseev and Tikhomirov \cite{AleTikFom79}, Kurcyusz and Zowe \cite{KurZow}.
\begin{theorem}
 The set $\Lambda$ is not empty.
\end{theorem}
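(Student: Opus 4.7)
The plan is to apply an abstract Lagrange multiplier theorem in Fritz--John form and then translate the abstract optimality system into the Pontryagin formulation. The control-affine structure is what makes this translation transparent: the full minimum condition (\ref{chap1maxcond}), imposed on a function linear in $u$ over the pointwise cone $u\ge 0$, is equivalent to $H_{u_i}\lam(t)\ge 0$ a.e.\ together with the complementarity relation $H_{u_i}\lam(t)\uh_i(t)=0$ a.e.

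First I would cast the problem abstractly. Let $G:\W\to \cR^{d_\varphi+d_\eta}\times L_\infty(0,T;\cR^n)\times\cR^n$ collect the endpoint values $\varphi_i(x(T))$ and $\eta_j(x(T))$, the state-equation residual $\dot x-\sum_{i=0}^m u_if_i(x)$ and the initial condition $x(0)-x_0$, with cost $J(w)=\varphi_0(x(T))$ and convex-cone constraint $u\in K:=\{u\in\U:u\ge 0\text{ a.e.}\}$. Under the $C^2$ assumption on the $f_i$ and the smoothness of the $\varphi_i,\eta_j$, the map $G$ is Fr\'echet-differentiable on $\W$, and its linearization at $\wh$ in a direction $\delta w=(z,v)$ yields the standard variational system $\dot z=\sum_i\uh_i f_i'(\xh)z+\sum_i v_if_i(\xh)$ with $z(0)=0$.

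Second, invoke an abstract Fritz--John theorem such as Kurcyusz--Zowe \cite{KurZow} or Alekseev--Tikhomirov \cite{AleTikFom79}, which requires no constraint qualification and yields nontrivial multipliers: nonnegative scalars $\alpha=(\alpha_0,\ldots,\alpha_{d_\varphi})$, a vector $\beta\in\cR^{d_\eta,*}$, an adjoint variable for the state equation, and a multiplier in the polar of the feasible cone of $K$ at $\uh$. Integration by parts against $z$ identifies the state-equation multiplier with a Lipschitz function $\psi$ solving the costate equation and transversality condition (\ref{chap1costateeq}) with terminal Lagrangian $\ell\lam$. Stationarity in $v$ then reads $\int_0^T H_u\lam(t)\,v(t)\,\dtt\ge 0$ for every $v$ such that $\uh+v\in K$; a pointwise separation argument, exploiting the linearity of $H$ in $u$, translates this into $H_{u_i}\lam(t)\ge 0$ a.e.\ and $H_{u_i}\lam(t)\uh_i(t)=0$ a.e., which combined give exactly (\ref{chap1maxcond}).

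Finally, if $|\alpha|+|\beta|$ were zero, the transversality condition would force $\psi(T)=0$, and the homogeneous linear costate equation would propagate this to $\psi\equiv 0$, contradicting the Fritz--John nontriviality assertion. Dividing through by $|\alpha|+|\beta|$ then delivers the normalization (\ref{chap1nontriv}) along with (\ref{chap1alphapos}). The main technical point is verifying the hypotheses of the abstract theorem, in particular the Fr\'echet-differentiability of $G$ on the $L_\infty$-topology (a routine Gronwall estimate) and handling the absence of a constraint qualification; both are treated in detail in the references cited and are facilitated here by the pointwise, convex-cone structure of the control constraint.
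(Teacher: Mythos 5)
The paper does not give a proof of this theorem; it simply cites Alekseev--Tikhomirov \cite{AleTikFom79} and Kurcyusz--Zowe \cite{KurZow} as references for the result. Your proposal correctly reconstructs the standard Fritz--John/abstract-multiplier argument from exactly those sources (abstract multiplier rule, integration by parts to obtain the costate equation and transversality, pointwise separation for the minimum condition, nontriviality by backward propagation of $\psi\equiv 0$), so it is essentially the same approach the paper implicitly invokes.
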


\begin{remark}\label{chap1Lambdacompact}
Since $\psi$ may be expressed as a linear continuous mapping of $(\alpha,\beta)$ and since \eqref{chap1nontriv} holds, $\Lambda$ is a finite-dimensional compact set. Thus, it can be identified with a compact subset of $\cR^s,$ where $s:=d_{\varphi}+d_{\eta}+1.$
\end{remark}

The following expression for the derivative of the Lagrangian function holds
\be
\label{chap1derlag}
\Phi_u[\lambda](\wh)v=\int_0^TH_u[\lambda](\xh(t),\uh(t),t)v(t)\dtt.
\ee
Consider $v\in \U$ and the \textit{linearized state equation:} 
\begin{equation} \label{chap1lineareq}
\left\{
\begin{array}{l}
\dot{z}(t)=\ds\sum_{i=0}^m \uh_i(t)f_i'(\xh(t))z(t)+\sum_{i=1}^mv_i(t)f_i(\uh(t)),\quad \mr{a.e.}\,\mr{on}\,[0,T],\\
z(0)=0.
\end{array}
\right.
\end{equation}
Its solution $z$ is called the \textit{linearized state variable.}

With each index $i=1,\hdots,m,$ we associate the sets
\be
\label{chap1setsI}
I_0^i:=\left\{t\in [0,T]: \max_{\lambda\in\Lambda}H_{u_i}\lam(t)>0\right\},\quad
I_+^i:=[0,T]\backslash I_0^i,
\ee
and the \textit{active set}
\be
\tilde I^i_0:=\{t\in [0,T]:\uh_i(t)=0\}.
\ee
Notice that $I_0^i\subset \tilde I^i_0,$ and that $I_0^i$ is relatively open in $[0,T]$ as each $H_{u_i}\lam$ is continuous.

\noindent \textbf{Assumption 1.}
Assume
\textit{strict complementarity for the control constraint,}
i.e.  for every $i=1,\hdots,m,$
\be \label{chap1sc}
I^i_0=\tilde I^i_0,\
\mr{up}\ \mr{to}\ \mr{a}\ \mr{set}\ \mr{of}\ \mr{null}\ \mr{measure.}
\ee

Observe then that for any index $i=1,\hdots,m,$ the control $\uh_i(t)\gr 0$ a.e. on $I_+^i,$ and given $\lambda\in \Lambda,$
\benl
H_{u_i}\lam(t)=0,\ \mr{a.e.}\ \mr{on}\ I_+^i.
\eenl

\noindent \textbf{Assumption 2.}
For every
$i=1,\hdots,m,$ the active set $I^i_0$ is a finite union of
intervals, i.e.
\benl
I^i_0=\ds\bigcup_{j=1}^{N_i} I^i_{j},
\eenl
for $I^i_{j}$ subintervals of $[0,T]$ of the form $[0,d),$
$(c,T];$ or $(c,d)$ if $c\neq 0$ and $d\neq T.$
Denote by
$c_1^i<d_1^i<c_2^i<\hdots<c_{N_i}^i<d_{N_i}^i$ the
endpoints of these intervals. Consequently,
$I_+^i$ is a finite union of intervals as well.

\begin{remark}[On the multi-dimensional control case]

We would like to make a comment concerning solutions with more than one control component being singular at the same time. In \cite{CJT06,CJT08}, Chitour et al. proved that generic systems with three or more control variables, or with two controls and drift did not admit singular optimal trajectories (by means of Goh's necessary condition \cite{Goh66}).  Consequently, the study of generic properties of control-affine systems is restricted to problems having either one dimensional control or two control variables and no drift.
Nevertheless, there are motivations for investigating problems with an arbitrary number of inputs that we point out next.
In \cite{LedSch12}, Ledzewicz and Sch\"attler worked on a model of cancer treatment having two control variables entering linearly in the pre-Hamiltonian and nonzero drift. They provided necessary optimality conditions for solutions with both controls being singular at the same time. Even if they were not able to give a proof of optimality they claimed to have strong expectations that this structure is part of the solution.
Other examples can be found in the literature. Maurer in \cite{Mau76} analyzed a resource allocation problem (taken from Bryson-Ho \cite{BryHo}). The model had two controls and drift, and numerical computations yielded a candidate solution containing two simultaneous singular arcs.
For a system with a similar structure,
Gajardo et al. in \cite{GRR08} discussed the optimality
of an extremal with two singular control components at the same time.
Another motivation that we would like to point out is the technique used in Aronna et al. \cite{ABM11} to study the shooting algorithm for bang-singular solutions. In order to treat this kind of extremals, they perform a transformation that yields a new system and an associated totally singular solution.
This new system involves as many control variables as singular arcs of the original solution. Hence, even a one-dimensional problem can lead to a multi-dimensional totally singular solution. These facts give a motivation for the investigation of multi-input control-affine problems.
\end{remark}


\subsection{Critical cones}
Let $1\leq p \leq \infty,$ and call $\Uspace_p:=L_p(0,T;\cR^m),$   $\Uspace_p^+:=L_p(0,T;\cR^m_+)$ and $\X_p:=W^1_p(0,T;\cR^n).$
Recall that given a topological vector space $E,$ a subset $D\subset E$ and $x\in E,$ a \textit{tangent direction} to $D$ at $x$ is an element $d\in E$ such that there exists sequences $(\sigma_k)\subset \cR_+$ and $(x_k)\subset D$ with
\benl
\frac{x_k-x}{\sigma_k}\rightarrow d.
\eenl
It is a well known result, see e.g. \cite{ComPen97}, that the tangent cone to $\U_2^+$ at $\uh$ is
\benl
\label{chap1tangentL2}
\{v\in\U_2:\ v_i\geq 0\ \mathrm{on}\ I^i_0,\ \mr{for}\, i=1,\hdots,m\}.
\eenl
Given $v\in\U_p$ and $z$  the solution of \eqref{chap1lineareq}, consider the \textit{linearization of the cost and final constraints}
\be\label{chap1linearcons}
\left\{
\begin{split}
&\varphi'_i(\xh(T))z(T)\leq 0,\ i=0,\hdots,d_{\varphi},\\
&\eta'_j(\xh(T))z(T)=0,\ j=1,\hdots,d_{\eta}.
\end{split}
\right.
\ee
For $p\in\{2,\infty\},$ define the $L_p-$\textit{critical cone} as
\benl
\label{chap1critcones}
\C_p:=
\left\{
(z,v)\in \X_p\times\Uspace_p:
v\ \mr{tangent}\ \mr{to}\  \Uspace_p^+,\ \text{\eqref{chap1lineareq} and \eqref{chap1linearcons}}\ \mr{hold}
\right\}.
\eenl
Certain relations of inclusion and density between
some approximate critical cones are needed. Given $\varepsilon\geq0$ and $i=1,\hdots,m,$ define the \textit{$\varepsilon-$active sets,} up to a set of null measure
\benl
I_{\varepsilon}^i:=\{t\in (0,T):\uh_i(t)\leq
\varepsilon\},
\eenl
and the sets
\benl
\W_{p,\varepsilon}:=\{(z,v)\in \X_p\times\Uspace_p:v_i=0\ \mathrm{on}\ I^i_{\varepsilon},\ \eqref{chap1lineareq}\ \mr{holds}\}.
\eenl
By Assumption 1, the following explicit expression for $\C_2$ holds
\be
\label{chap1C2}
\C_2=
\{
(z,v)\in\W_{2,0}:\eqref{chap1linearcons}\ \mr{holds}\}.
\ee
Consider the \textit{$\varepsilon-$critical cones}
\be
\label{chap1epsiloncone}
\C_{p,\varepsilon}:=
\{(z,v)\in \W_{p,\varepsilon}:\eqref{chap1linearcons}\ \mr{holds}\}.
\ee
Let $\varepsilon\gr 0.$ Note that by \eqref{chap1C2}, $\C_{2,\varepsilon}\subset \C_2.$ On the other hand, given $(z,v)\in \C_{\infty,\varepsilon},$ it easily follows that $\uh+\sigma v\in\U^+$ for small positive $\sigma.$ Thus $v$ is tangent to $\U^+$ at $\uh,$ and this yields $\C_{\infty,\varepsilon}\subset\C_{\infty}.$

\if{
\begin{remark}
 \label{chap1phi0}  Let $(z,v)\in \C_2.$ Observe that since $v_i=0$ on $I_0^i,$ necessarily $\intT H_u\lam(t)v(t)\dtt=0$ and thus, by \eqref{chap1derlag}, $\Phi_u\lam(\wh)v=0.$ Consequently,
\be
\sum_{i=0}^{d_{\varphi}} \alpha_i\varphi'(\xh(T))z(T)
+
\sum_{j=1}^{d_{\eta}} \beta_j \eta_j'(\xh(T))z(T)=0.
\ee
In case $\alpha_0>0,$ note that if \eqref{chap1linearcons} holds only for $i\geq 1,$ then $\varphi_0'(\xh(T))z(T)\leq 0.$
\end{remark}
}\fi

\bs

Recall the following technical result, see Dmitruk \cite{Dmi08}.
\begin{lemma}[on density]
\label{chap1lemmadense}
Consider a locally convex topological space $X,$ a
finite-faced cone $C\subset X,$ and a linear manifold $L$
dense in $X.$ Then the cone $C\cap L$ is dense in $C.$
\end{lemma}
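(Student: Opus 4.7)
The plan is to reduce $C$ to a description with a genuine relative interior, transfer the density of $L$ in $X$ to density of $L\cap M$ for a suitable finite-codimensional subspace $M$, and then approximate any $x\in C$ by elements of $L\cap M$ and push them slightly into the interior of $C$ along a direction $d\in L\cap M$. Writing $C=\{x\in X:\langle a_i,x\rangle\le 0,\ i=1,\ldots,k\}$ with $a_i\in X^*$, the first step is a Slater reduction: if no point $x^*$ satisfies $\langle a_i,x^*\rangle<0$ for all $i$, Gordan's theorem of the alternative applied to the finite-dimensional image $T(X)\subset\cR^k$ of $T(x):=(\langle a_i,x\rangle)_i$ yields scalars $\mu_i\ge 0$, not all zero, with $\sum_i\mu_i a_i=0$. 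On $C$ the identity $\sum_i\mu_i\langle a_i,x\rangle=0$ has only nonpositive terms, so $\langle a_{i_0},\cdot\rangle\equiv 0$ on $C$ whenever $\mu_{i_0}>0$; I would move such constraints to an equality list and iterate. After at most $k$ steps this produces a decomposition $C=M\cap\tilde C$ with $M=\{x\in X:\langle b_j,x\rangle=0,\ j=1,\ldots,r\}$ closed of finite codimension and $\tilde C=\{x\in X:\langle a_i,x\rangle\le 0,\ i=1,\ldots,s\}$ admitting a Slater point $x^*\in M$.

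\emph{Density of $L\cap M$ in $M$.} Since $X/M$ is finite-dimensional and the image of $L$ is dense, that image coincides with $X/M$. I would pick $\ell_1,\ldots,\ell_r\in L$ whose classes form the basis of $X/M$ dual to the basis $(b_j)$ of $(X/M)^*$, so that $\langle b_j,\ell_i\rangle=\delta_{ij}$, set $P:=\mathrm{span}(\ell_1,\ldots,\ell_r)\subset L$, and define $\pi_P(x):=\sum_j\langle b_j,x\rangle\,\ell_j$. This is a continuous projection of $X$ onto $P$ along $M$. For any $x\in M$ and any net $y_\alpha\in L$ with $y_\alpha\to x$, the net $m_\alpha:=y_\alpha-\pi_P(y_\alpha)$ belongs to $L\cap M$ (since $\pi_P(y_\alpha)\in P\subset L$ and $\ker\pi_P=M$) and converges to $x-\pi_P(x)=x$.

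\emph{Perturbation.} Applying the previous step to $x^*$ and using continuity of the $a_i$, I obtain $d\in L\cap M$ and $c>0$ with $\langle a_i,d\rangle\le -c$ for every $i=1,\ldots,s$. Given $x\in C$ and a net $y_\alpha\in L\cap M$ converging to $x$, I set $\epsilon_\alpha:=c^{-1}\max_i\max(0,\langle a_i,y_\alpha\rangle)$; since each $\langle a_i,y_\alpha\rangle\to\langle a_i,x\rangle\le 0$, one has $\epsilon_\alpha\to 0^+$. Then $x_\alpha:=y_\alpha+\epsilon_\alpha d\in L\cap M$ satisfies $\langle a_i,x_\alpha\rangle\le\langle a_i,y_\alpha\rangle-c\epsilon_\alpha\le 0$ for every $i$, so $x_\alpha\in L\cap C$ and $x_\alpha\to x$, which gives the density.

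The hard part will be the initial Slater reduction: the key observation is that when a finite-faced cone has no Slater point, Gordan's theorem of the alternative forces at least one inequality to be globally active on $C$, so inequalities can be promoted one by one to equalities until the cone acquires a relative interior inside a finite-codimensional subspace. Once this is in place, the density of $L\cap M$ in $M$ reduces to a standard quotient argument using that $X/M$ is finite-dimensional, and the final step is a routine Slater-type correction.
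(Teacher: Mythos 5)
Your proof is correct, and its core mechanism — no Slater point for the inequality block forces, via a theorem of the alternative, at least one inequality to be identically zero on $C$, which is then promoted to an equality and the process iterated — is the same engine that drives the paper's argument (the paper invokes the Dubovitskii--Milyutin theorem where you invoke Gordan's theorem applied to the finite-dimensional image $T(X)\subset\cR^k$; these are interchangeable here). Where you genuinely diverge is in the treatment of the equality block. The paper's argument eliminates equalities one at a time by an elementary segment/intermediate-value device: given $x_0$ with $\langle p,x_0\rangle=0$ and a convex neighborhood $\mathcal O(x_0)$, it picks $x_1,x_2\in L\cap\mathcal O(x_0)$ with $\langle p,x_1\rangle<0<\langle p,x_2\rangle$ and locates the crossing point on the segment $[x_1,x_2]$, which lies in $L\cap\mathcal O(x_0)$ because $L$ is a linear manifold and $\mathcal O(x_0)$ is convex; this gives a two-stage induction (remove one equality, recurse, then handle inequalities). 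You instead collect all equalities into a closed finite-codimensional subspace $M$ and prove density of $L\cap M$ in $M$ directly, by exhibiting a continuous finite-rank projection $\pi_P$ onto a complement $P\subset L$ of $M$ and observing that $y_\alpha\mapsto y_\alpha-\pi_P(y_\alpha)$ carries a net in $L$ converging to $x\in M$ to a net in $L\cap M$ converging to $x$. Your route is more structured and handles all equalities in one stroke; the paper's segment argument is more elementary and avoids having to build a complement inside $L$ (it only ever needs one approximating point on either side of a hyperplane). The final Slater perturbation $x_\alpha=y_\alpha+\epsilon_\alpha d$ is the same idea as the paper's use of $x_0+\varepsilon\hat x$ to enter $\mathrm{int}\,C$, just written as an explicit correction with a computable $\epsilon_\alpha$; both are fine. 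No gaps.
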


\if{
Let the cone $C$ be given by

\be
C=\{x\in X:(p_i,x)=0,\ \mathrm{for}\ i= 1,\hdots,\mu,\
(q_j,x)\leq 0,\ \mathrm{for}\ j=1,\hdots,\nu\}.
\ee

 Let us show first, without lost of generality, that the
equality constraints can be removed from the formulation.
It suffices to consider the case where $C$ is given by only
one equality $(p,x)=0.$

 Take any point $x_0\in C$ and a convex neighborhood
$\mathcal{O}(x_0).$ We have to show that there exists $x$
in $C\cap L\cap \mathcal{O}(x_0).$ Since the set $(p,x)<0$
is open, its intersection with $\mathcal{O}(x_0)$ is open
too and obviously nonempty, hence it contains a point $x_1$
from the set $L,$ because the last one is dense in $X.$
Similarly, the intersection of the sets $(p,x)<0$ and
$\mathcal{O}(x_0)$ contains a point $x_2\in L.$ Since
$\mathcal{O}(x_0)$ is convex, it contains a point $x$ such
that $(p,x)=0,$ which belongs to $C$ and to $L\cap
\mathcal{O}(x_0).$

 We can then consider only the case where $C$ is given by a
finite number of inequalities. Suppose first that there
exists $\xh\in C$ such that $(q_j,\xh)<0$ for all $j,$
hence $\xh \in \mathrm{int}{C}.$ Take any $x_0\in C$ and
any convex neighborhood $\mathcal{O}(x_0).$ We have to find
a point $x\in C\cap \mathcal{O}(x_0)\cap L.$ We know that,
for any positive $\varepsilon,$ the point
$x_{\varepsilon}:=x_0+\varepsilon \xh$ lies in
$\mathrm{int}(C),$ and then there exists a positive
$\varepsilon$ such that this point lies also in
$\mathcal{O}(x_0).$ Thus, the open set $\mathrm{int}{C}\cap
\mathcal{O}(x_0)$ is nonempty, and then contains a point
$x$ from the dense set $L.$

 Suppose now that the above point $\xh\in\mathrm{int}{C}$
does not exist, and, without lost of generality,  that
$q_j\neq 0$ for every $j.$ In this case, by the
Dubovitskii-Milyutin theorem, there exist  multipliers
$\alpha_j\geq0,\ j=1,\hdots,\nu,$ not all zero, such that
Euler-Lagrange equation holds: $\alpha_1
q_1+\hdots+\alpha_{\nu}q_{\nu}=0.$ Suppose, without lost of
generality, that $\alpha_{\nu}>0.$ Then, for all $x \in C$
we actually have $(q_{\nu},x)=0,$ not just $\leq0.$  This
means that the cone $C$ can be given by the constraints
$(q_j,x)\leq 0,\ j=1,\hdots,\nu-1,\ (q_{\nu},x)=0.$ But, as
was already shown, the last equality can be removed, so the
cone can be given by a smaller number of inequalities.
Applying induction arguments, we arrive at a situation when
either all the inequalities are changed into equalities and
then removed, or the strict inequalities have a nonempty
intersection. Since both cases are already considered, the
proof is complete.
}\fi

\begin{lemma}\label{chap1conedense}
 Given $\varepsilon>0$ the following properties hold.
\begin{itemize}
 \item [(a)] $\C_{\infty,\varepsilon}\subset
\C_{2,\varepsilon}$ with dense inclusion.
\item [(b)]
$\bigcup_{\varepsilon>0}\C_{2,\varepsilon}\subset \C_2$
with dense inclusion.
\end{itemize}
\end{lemma}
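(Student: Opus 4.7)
I would prove both density statements by applying the density lemma to well-chosen triples $(X,C,L)$. The inclusions themselves are immediate: for (a), $\U_\infty\subset\U_2$ and $\X_\infty\subset\X_2$; for (b), if $(z,v)\in\C_{2,\varepsilon}$ then $v_i=0$ on $I^i_\varepsilon\supset I^i_0$ (the inclusion using Assumption 1), hence $v$ is tangent to $\U_2^+$ at $\uh$ and $(z,v)\in\C_2$.

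\textbf{Part (a).} I take $X=\W_{2,\varepsilon}$ with the topology inherited from $\X_2\times\U_2$; this is a closed linear subspace, hence a locally convex space. Let $C=\C_{2,\varepsilon}$, a finite-faced cone in $X$ cut out by the $d_\varphi+1$ inequalities and $d_\eta$ equalities of \eqref{chap1linearcons}, and $L=\W_{\infty,\varepsilon}$ embedded in $X$. Clearly $C\cap L=\C_{\infty,\varepsilon}$, so the only nontrivial point is density of $L$ in $X$. Given $(z,v)\in X$, I set $v^N:=v\,\mathbf{1}_{\{|v|\leq N\}}\in\U_\infty$; this preserves the property $v^N_i=0$ on $I^i_\varepsilon$, and the corresponding $z^N$ solving \eqref{chap1lineareq} is Lipschitz, so $(z^N,v^N)\in L$. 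Dominated convergence gives $v^N\to v$ in $\U_2$, and a Gr\"onwall estimate gives $z^N\to z$ in $\X_2$. The density lemma then delivers $\C_{\infty,\varepsilon}$ dense in $\C_{2,\varepsilon}$.

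\textbf{Part (b).} By the characterization \eqref{chap1C2}, $\C_2=\{(z,v)\in\W_{2,0}:\eqref{chap1linearcons}\text{ holds}\}$. I then apply the density lemma with $X:=\W_{2,0}$, $C:=\C_2$ (a finite-faced cone in $X$), and $L:=\bigcup_{\varepsilon>0}\W_{2,\varepsilon}$. The set $L$ is a linear subspace of $X$: if $(z_k,v_k)\in\W_{2,\varepsilon_k}$ for $k=1,2$, then $v_1+v_2$ vanishes on $I^i_{\varepsilon_1}\cap I^i_{\varepsilon_2}=I^i_{\min(\varepsilon_1,\varepsilon_2)}$, so the sum belongs to $\W_{2,\min(\varepsilon_1,\varepsilon_2)}\subset L$. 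For density, given $(z,v)\in X$, set $v^\varepsilon_i(t):=v_i(t)\,\mathbf{1}_{\{\uh_i(t)>\varepsilon\}}$; then $v^\varepsilon_i=0$ on $I^i_\varepsilon$, so $(z^\varepsilon,v^\varepsilon)\in\W_{2,\varepsilon}\subset L$. Moreover $(v_i-v_i^\varepsilon)^2=v_i^2\,\mathbf{1}_{I^i_\varepsilon}$ decreases pointwise to $v_i^2\,\mathbf{1}_{I^i_0}=0$ as $\varepsilon\downarrow 0$, using the property $v_i=0$ a.e.\ on $I^i_0$ built into \eqref{chap1C2}; dominated convergence then yields $v^\varepsilon\to v$ in $\U_2$, and Gr\"onwall gives $z^\varepsilon\to z$ in $\X_2$. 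Since $C\cap L=\bigcup_{\varepsilon>0}\C_{2,\varepsilon}$, the density lemma closes the argument.

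\textbf{Main obstacle.} The critical step is the choice of ambient linear space in part (b): $\C_2$ must be regarded as a finite-faced cone inside $\W_{2,0}$ rather than inside $\X_2\times\U_2$, which relies on the nontrivial identity \eqref{chap1C2}; this is the only place where $v_i=0$ (and not merely $v_i\geq 0$) on $I^i_0$ is really used, and without it the truncation $v^\varepsilon$ would not converge to $v$ in $\U_2$. Once this is in place, taking $L$ as the nested union $\bigcup_{\varepsilon>0}\W_{2,\varepsilon}$ reduces the density verification to routine truncation plus dominated convergence.
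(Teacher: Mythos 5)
Your proof is correct and follows essentially the same route as the paper: both parts apply the density lemma (Lemma \ref{chap1lemmadense}) with the same choices of ambient space $X$, finite-faced cone $C$ and dense linear manifold $L$, namely $X=\W_{2,\varepsilon}$, $L=\W_{\infty,\varepsilon}$ in part (a) and $X=\W_{2,0}$, $L=\bigcup_{\varepsilon>0}\W_{2,\varepsilon}$ in part (b), the latter relying on the identity \eqref{chap1C2} exactly as the paper does. The only difference is that you make explicit the truncation and dominated-convergence arguments that justify the density of $L$ in $X$ in each case, which the paper simply asserts.
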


\begin{proof} \textbf{(a)} The inclusion is immediate.
As $\Uspace$ is dense in $\Uspace_2,$ $\W_{\infty,\varepsilon}$ is a dense subspace of $\W_{2,\varepsilon}.$ By Lemma \ref{chap1lemmadense}, $\C_{2,\varepsilon}\cap \W_{\infty,\varepsilon}$ is dense
in $\C_{2,\varepsilon},$  as desired.

\noindent\textbf{(b)} The inclusion is immediate. In order to
prove density, consider the following dense subspace of $\W_{2,0}:$
\benl
\W_{2,\bigcup}:=\bigcup_{\varepsilon>0} \W_{2,\varepsilon},
\eenl
and the finite-faced cone in $\C_2\subset \W_{2,0}.$
By Lemma \ref{chap1lemmadense}, $\C_2\cap \W_{2,\bigcup}$ is dense in $\C_2,$
which is what we needed to prove.
\end{proof}


\section{Second order analysis}

\subsection{Second variation}

Consider the following quadratic mapping on $\W;$
\benl
\label{chap1Omega}
\begin{split}
\Omega[\lambda](\delta x,\delta
u):=&\half\ell''[\lambda](\xh(T))(\delta x(T))^2\\
&+
\half\int_0^T \left[(H_{xx}[\lambda]\delta x,\delta
x)+2(H_{ux}[\lambda]\delta x,\delta u)\right]\dtt.
\end{split}
\eenl
The next lemma provides a second order expansion for the Lagrangian function involving operator $\Omega.$
Recall the following notation: given two functions $h:\cR^n\rightarrow\cR^{n_h} $ and
$k:\cR^n\rightarrow \cR^{n_k},$ we say
that $h$ is a \textit{big-O} of $k$ around 0 and denote
it by
\benl
h(x)=\mathcal{O}(k(x)),
\eenl
if there exists positive constants $\delta$ and $M$ such that
$|h(x)|\leq M|k(x)|$ {for}  $|x|<\delta.$
It is a \textit{small-o} if $M$ goes to 0 as $|x|$ goes
to 0. Denote this by
\benl
h(x)=o(k(x)).
\eenl

\begin{lemma}
\label{chap1expansionlagrangian}
Let $\delta w=(\delta x,\delta
u)\in\W.$ Then for every multiplier
$\lambda\in\Lambda,$ the function $\Phi$ has the following expansion (omitting time arguments):
\begin{align*}
\Phi[\lambda](\wh+\delta w)=& \int_0^T H_u[\lambda]\delta u\dtt+\Omega[\lambda](\delta
x,\delta u)+\half\int_0^T  (H_{uxx}[\lambda]\delta x,\delta x,\delta u)\dtt\\
&+\mathcal{O}(|\delta x(T)|^{3})+
\int_0^T |(\uh+\delta u)(t)| \mathcal{O}(|\delta x(t)|^3)\,  \dtt.
\end{align*}
\end{lemma}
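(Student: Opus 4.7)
The plan is a direct second-order Taylor expansion of the two constituents of $\Phi\lam(\wh+\delta w)$, followed by an integration by parts that uses the costate equation and the transversality condition to annihilate every linear-in-$\delta x$ contribution, so that only the stated terms survive.

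First I would Taylor-expand the terminal term,
\begin{equation*}
\ell\lam(\xh(T)+\delta x(T)) = \ell\lam(\xh(T)) + \ell'\lam(\xh(T))\delta x(T) + \tfrac{1}{2}\ell''\lam(\xh(T))(\delta x(T))^2 + \mathcal{O}(|\delta x(T)|^3),
\end{equation*}
and each vector field to second order:
\begin{equation*}
f_i(\xh+\delta x) = f_i(\xh) + f_i'(\xh)\delta x + \tfrac{1}{2} f_i''(\xh)(\delta x,\delta x) + \mathcal{O}(|\delta x|^3).
\end{equation*}
Multiplying the expansion of $\sum_i(\uh_i+\delta u_i)f_i(\xh+\delta x)$ by $\psi(t)$ and identifying the derivatives of the pre-Hamiltonian $H = \psi\sum_i u_i f_i$, one obtains
\begin{equation*}
\begin{split}
\psi\sum_i (\uh_i+\delta u_i) f_i(\xh+\delta x) ={}& H + H_x\delta x + \tfrac{1}{2}(H_{xx}\delta x,\delta x) + H_u\delta u \\
&+ (H_{ux}\delta x,\delta u) + \tfrac{1}{2}(H_{uxx}\delta x,\delta x,\delta u) + |\uh+\delta u|\,\mathcal{O}(|\delta x|^3),
\end{split}
\end{equation*}
where the cubic remainder picks up the factor $|\uh+\delta u|$ because every remainder of $f_i$ is multiplied by $\uh_i+\delta u_i$. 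Using $\dot\xh = \sum_i \uh_i f_i(\xh)$, the zeroth-order piece $H$ exactly cancels the contribution $-\intT \psi\dot\xh\,\dtt$ appearing in $\Phi\lam(\wh+\delta w)$.

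Next I would process the leftover piece $-\intT \psi(t)\,\delta\dot x(t)\,\dtt$ by integration by parts. Since $x(0)=x_0$ is fixed, $\delta x(0)=0$, and therefore this integral equals $-\psi(T)\delta x(T) + \intT \dot\psi\,\delta x\,\dtt$. The costate equation $\dot\psi = -H_x$ then cancels the linear term $\intT H_x\delta x\,\dtt$, and the transversality condition $\psi(T)=\ell'\lam(\xh(T))$ cancels $\ell'\lam(\xh(T))\delta x(T)$. Finally, $\ell\lam(\xh(T)) = \sum_i \alpha_i\varphi_i(\xh(T)) + \sum_j \beta_j\eta_j(\xh(T)) = 0$, thanks to feasibility together with the reduction $\varphi_i(\xh(T))=0$ made in the paper. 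Collecting the surviving terms against the definition of $\Omega\lam$ yields the announced identity.

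The only point requiring a little care is the uniform control of the $\mathcal{O}(|\delta x(t)|^3)$ remainders on $[0,T]$: this follows from $f_i\in C^2$, the uniform continuity of $f_i''$ on a fixed compact neighborhood of the graph of $\xh$, and the essential boundedness of $\uh+\delta u$, which makes the product $|\uh+\delta u|\,\mathcal{O}(|\delta x|^3)$ integrable in time. Once this uniformity is recorded, the rest of the argument is pure bookkeeping of the identifications of $H_x$, $H_{xx}$, $H_u$, $H_{ux}$ and $H_{uxx}$ from the various bilinear and trilinear products arising in the expansion.
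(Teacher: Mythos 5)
Your proof follows essentially the same route as the paper: Taylor-expand $\ell$ and each $f_i$ to second order, substitute into the expression for $\Phi[\lambda](\wh+\delta w)$, and then use integration by parts together with the costate equation and the transversality condition to absorb the linear-in-$\delta x$ terms, leaving exactly the terms in the stated expansion. (The paper packages the integration-by-parts step as a single prepared identity, and it contains a small typo referring to the state equation rather than the costate equation, but the manipulation is the same as yours.) You also correctly flag that the missing constant $\Phi[\lambda](\wh)=\ell[\lambda](\xh(T))$ vanishes thanks to the normalization $\varphi_i(\xh(T))=0$ and feasibility of $\eta_j$, a point the paper leaves implicit.
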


\begin{proof}
Omit the dependence on $\lambda$ for the sake of simplicity.
Use the Taylor expansions
\benl\label{chap1lexp}
 \ell(\xh(T)+\delta x(T))
=\ell(\xh(T))+\ell'(\xh(T))\delta
x(T)+\half\ell''(\xh(T))(\delta x(T))^2+\mathcal{O}(|\delta x(T)|^{3}),
\eenl
\benl\label{chap1fexp}
 f_i(\xh(t)+\delta x(t))
=f_i(\xh(t))+f_i'(\xh(t))\delta x(t)+\half
f_i''(\xh(t))(\delta x(t))^2+\mathcal{O}(|\delta x(t)|^3),
\eenl
 in the expression
\benl\label{chap1DeltaPhi}
\Phi(\wh+\delta w)=\ell(\xh+\delta x(T))+\intT \psi\left[
\sum_{i=0}^m(\uh_i+\delta u_i)f_i(\xh+\delta x)-\dot{\xh}-\dot{\delta x}\right]\dtt.
\eenl
Afterwards, use the identity
\benl
\int_0^T \psi\sum_{i=0}^m \uh_if'_i(\xh)\delta x \dtt
=
-\ell'(\xh(T))\delta x(T)+\int_0^T \psi\dot{\delta x}\dtt,
\eenl
obtained by integration by parts and equation \eqref{chap1stateeq} to get the desired result.
\end{proof}

The previous lemma yields the following identity for  every $(\delta x,\delta u)\in \W:$
 \benl
\Omega[\lambda](\delta x,\delta
u)=\half D^2\Phi[\lambda](\wh)(\delta x,\delta u)^2.
\eenl

\subsection{Necessary condition} This section provides the following second order necessary condition in terms of $\Omega$ and the critical cone $\C_2.$
\begin{theorem}
\label{chap1NC2}
If $\wh$ is a weak minimum then
\be
\label{chap1NC2eq}
\max_{\lambda\in\Lambda}\Omega[\lambda](z,v)\geq 0,\quad
\mr{for}\ \mr{all}\ (z,v)\in \C_2.
\ee
\end{theorem}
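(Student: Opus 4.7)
The proof follows the Levitin--Milyutin--Osmolovskii paradigm: density reduction of the critical cone, construction of a second-order feasible variation realizing a descent when $\max_{\lambda}\Omega[\lambda](z,v)<0$, and a contradiction with weak optimality.

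\emph{Density reduction.} For each fixed $\lambda\in\Lambda$, the quadratic form $\Omega[\lambda]$ extends continuously to $\X_2\times\U_2$: boundedness of $H_{xx}[\lambda]$, $H_{ux}[\lambda]$ and $\ell''[\lambda]$ together with the Sobolev embedding $\X_2\hookrightarrow C([0,T];\cR^n)$ makes every summand jointly continuous. Because $\Lambda$ is a finite-dimensional compact set (Remark \ref{chap1Lambdacompact}) and $\lambda\mapsto\Omega[\lambda](z,v)$ is linear, $(z,v)\mapsto\max_{\lambda\in\Lambda}\Omega[\lambda](z,v)$ is continuous on $\X_2\times\U_2$. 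By Lemma \ref{chap1conedense}, $\bigcup_{\varepsilon>0}\C_{\infty,\varepsilon}$ is dense in $\C_2$, so it suffices to establish \eqref{chap1NC2eq} on $\C_{\infty,\varepsilon}$ for an arbitrary $\varepsilon>0$.

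\emph{Contradiction setup.} Fix $(z,v)\in\C_{\infty,\varepsilon}$ and suppose, towards a contradiction, that $\Omega[\lambda](z,v)<0$ for every $\lambda\in\Lambda$. By compactness and continuity in $\lambda$, there exists $\kappa>0$ such that $\Omega[\lambda](z,v)\leq-\kappa$ uniformly in $\lambda\in\Lambda$. Set $u_\sigma:=\hat u+\sigma v$. The condition $v_i=0$ on $I^i_\varepsilon$ forces $u_\sigma\geq 0$ for $0<\sigma<\varepsilon/\|v\|_\infty$, and the corresponding state admits the expansion $x_\sigma=\hat x+\sigma z+O_{L_\infty}(\sigma^2)$ by standard Gronwall-type ODE estimates.

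\emph{Second-order correction.} The endpoint constraints are violated by $u_\sigma$ only at order $\sigma^2$. The core step is to build a correction $r_\sigma\in\U$, uniformly bounded in $L_\infty$ and supported on the set $\{t:\hat u_i(t)\geq\varepsilon/2\}$ (so that nonnegativity is preserved), such that the state $\tilde x_\sigma$ associated with $\tilde u_\sigma:=u_\sigma+\sigma^2 r_\sigma$ satisfies $\varphi_i(\tilde x_\sigma(T))\leq-\tfrac{\kappa}{2}\sigma^2$ for $i=0,\ldots,d_\varphi$ and $\eta_j(\tilde x_\sigma(T))=0$ for $j=1,\ldots,d_\eta$, for every $\sigma>0$ small enough. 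The existence of $r_\sigma$ follows from a Hoffman/Robinson-type stability estimate for the finite-dimensional endpoint map, driven by the uniform strict inequality $\Omega[\lambda]\leq-\kappa$ on $\Lambda$. This is the main obstacle of the argument: when $\Lambda$ contains abnormal multipliers ($\alpha_0=0$), the classical constraint qualification fails, and the correction has to be produced through the abstract Dubovitskii--Milyutin/Levitin--Milyutin--Osmolovskii scheme \cite{LMO}, which uses the uniform negativity of $\Omega[\lambda]$ over the whole multiplier set to produce a descent direction in the finite-dimensional dual problem associated with the endpoint constraints.

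\emph{Conclusion.} The variation $\tilde w_\sigma:=(\tilde x_\sigma,\tilde u_\sigma)$ is feasible, $\|\tilde x_\sigma-\hat x\|_\infty+\|\tilde u_\sigma-\hat u\|_\infty=O(\sigma)\to 0$, and $\varphi_0(\tilde x_\sigma(T))<0=\varphi_0(\hat x(T))$, contradicting the weak minimality of $\hat w$. Hence $\max_{\lambda\in\Lambda}\Omega[\lambda](z,v)\geq 0$ on $\C_{\infty,\varepsilon}$, and the density reduction of the first step propagates the inequality to all of $\C_2$.
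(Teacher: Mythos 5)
Your high-level plan — density reduction to $\C_{\infty,\varepsilon}$, a second-order feasible perturbation, and a contradiction with weak optimality — is indeed the Levitin--Milyutin--Osmolovskii paradigm that the paper follows. The density reduction and the continuity of $(z,v)\mapsto\max_{\lambda}\Omega[\lambda](z,v)$ on $\X_2\times\U_2$ are correct and match the role played by Lemma \ref{chap1conedense}. The terminal step (once a correction $r_\sigma$ producing a strict decrease of the cost and feasibility is available, weak optimality is contradicted) matches Proposition \ref{chap1valgeq0}.

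The gap is in the ``second-order correction'' step, which you yourself flag as ``the main obstacle'' and then do not resolve. You assert that ``the existence of $r_\sigma$ follows from a Hoffman/Robinson-type stability estimate for the finite-dimensional endpoint map, driven by the uniform strict inequality $\Omega[\lambda]\leq-\kappa$ on $\Lambda$.'' That inference is not valid as stated. A Hoffman-- or Robinson--type estimate gives you a nearby solution of a perturbed (in)equality system \emph{once you already know the unperturbed second-order system is solvable}; it does not see the quadratic form $\Omega$ at all. The passage from ``$\Omega[\lambda](z,v)\le -\kappa$ uniformly over $\Lambda$'' to ``the system $\bar\varphi_i'(\hat u)r+\bar\varphi_i''(\hat u)(v,v)\le\delta\zeta$, $\bar\eta'(\hat u)r+\bar\eta''(\hat u)(v,v)=0$, $-r_i\le\delta\zeta$ on $I^i_0$ admits a solution with $\delta\zeta<0$'' is a theorem of the alternative (linear programming duality), not a metric regularity estimate. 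The paper makes exactly this step explicit: Proposition~\ref{chap1valgeq0} proves $\operatorname{val}\eqref{chap1QPv}\ge 0$ by a constructive argument that is \emph{independent of $\Omega$}, and only then is the Dubovitskii--Milyutin theorem (together with Lemma~\ref{chap1barell}) invoked on the linear program~\eqref{chap1QPv} to extract a multiplier $\lambda\in\Lambda$ with $\Omega[\lambda](z,v)\ge 0$. Your contradiction argument runs exactly this duality in reverse, so it is not optional; citing the abstract LMO scheme names the right toolbox but leaves the crux unproved.

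A secondary omission: the proposal never addresses the degenerate case where $\bar\eta'(\hat u)$ is not onto, in which no correction $r_\sigma$ exists at all. The paper handles this separately in Lemma~\ref{chap1notqualified} by exhibiting a $\lambda$ (or $-\lambda$) with $\Omega[\lambda]\ge 0$ directly. In your framework one can argue that the hypothesis $\max_\lambda\Omega[\lambda](z,v)<0$ automatically forces nondegeneracy (since the $\pm\lambda$ construction of Lemma~\ref{chap1notqualified} would otherwise yield a nonnegative value), but this needs to be said: as written, your second-order correction step is applied uniformly, and it would simply fail in the degenerate case.
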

For the sake of simplicity, define  $\bar\varphi:\Uspace\rightarrow \cR^{d_{\varphi}+1},$ and
$\bar\eta:\Uspace\rightarrow \cR^{d_{\eta}}$ as
\be
\label{chap1barvarphi}
\begin{split}
\bar\varphi_i(u)&:=\varphi_i(x(T)),\ \mr{for}\
i=0,1,\hdots,d_{\varphi},\\
\bar\eta_{j}(u)&:=\eta_j(x(T)),\ \mr{for}\
j=1,\hdots,d_{\eta},
\end{split}
\ee
where $x$ is the solution of \eqref{chap1stateeq} corresponding
to $u.$

\begin{definition}\label{chap1qcdef}
We say that the \textit{equality constraints are nondegenerate} if
\be
\label{chap1cq}
\bar\eta'(\uh)\ \mathrm{is}\ \mathrm{onto}\ \mr{from}\ \U\ \mr{to}\,\cR^{d_{\eta}}.
\ee
If \eqref{chap1cq} does not hold, we call them \textit{degenerate}.
\end{definition}



Write the problem in the following way
\be\label{chap1P}\tag{P}
\bar\varphi_0(u)\rightarrow \min;\quad \bar\varphi_i(u)\leq 0,\ i=1,\hdots,d_{\varphi},\
\bar\eta(u)=0,\ u\in \Uspace_+.
\ee
\textbf{Suppose that $\uh$ is a local weak solution of \eqref{chap1P}.}  Next we prove Theorem \ref{chap1NC2}. Its proof is divided into two cases: degenerate and nondegenerate equality constraints. For the first case the result is immediate and is tackled in the next Lemma. In order to show Theorem \ref{chap1NC2} for the latter case we introduce an auxiliary problem parameterized by certain critical directions $(z,v),$
denoted by \eqref{chap1QPv}.
We prove that val$\eqref{chap1QPv}\geq0$ and, by a result on duality, the desired second order condition will be derived.

\begin{lemma}\label{chap1notqualified}
If equality constraints are degenerate, then \eqref{chap1NC2eq} holds.
\end{lemma}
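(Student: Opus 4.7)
The plan is to exploit the degeneracy of the equality constraints to produce a pair of Pontryagin multipliers $\lambda$ and $-\lambda$ in $\Lambda$ for which $\Omega[-\lambda]=-\Omega[\lambda]$; then for every $(z,v)\in\C_2$ at least one of $\Omega[\lambda](z,v)$, $\Omega[-\lambda](z,v)$ is nonnegative, so the maximum over $\Lambda$ is nonnegative and \eqref{chap1NC2eq} follows immediately.

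The first step is a finite-dimensional Hahn--Banach argument: since $\bar\eta'(\uh):\U\to\cR^{d_\eta}$ is not onto, its range is a proper linear subspace of $\cR^{d_\eta}$, so there exists $\beta\in\cR^{d_\eta,*}$ with $|\beta|=1$ satisfying $\beta\,\bar\eta'(\uh)v=0$ for every $v\in\U$.

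Next I would build $\lambda:=(0,\beta,\psi)$ by setting $\alpha=0$ and letting $\psi$ be the solution of the costate equation \eqref{chap1costateeq} with terminal datum $\psi(T)=\sum_{j=1}^{d_\eta}\beta_j\eta_j'(\xh(T))$, and verify $\lambda\in\Lambda$. The nontriviality \eqref{chap1nontriv} and the sign condition \eqref{chap1alphapos} are immediate, so the substantive point is the minimum condition \eqref{chap1maxcond}. For this I would use the classical adjoint identity: differentiating $\psi z$ and combining \eqref{chap1lineareq} with \eqref{chap1costateeq} yields, for any $v\in\U$ and its linearized state $z$,
\[
\int_0^T H_u[\lambda](t)\,v(t)\,\dtt=\psi(T)z(T)=\beta\,\bar\eta'(\uh)v=0.
\]
Since $v\in\U$ is arbitrary and $H_u[\lambda]$ is continuous by Remark~\ref{chap1Hucont}, this forces $H_u[\lambda](t)=\psi(t)f_i(\xh(t))\equiv 0$ on $[0,T]$ for $i=1,\ldots,m$. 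Consequently $H[\lambda](\xh(t),u,t)=\psi(t)f_0(\xh(t))$ is independent of $u$, and \eqref{chap1maxcond} holds trivially with equality. The same computation applied to $(0,-\beta,-\psi)$, which solves \eqref{chap1costateeq} by linearity of that equation in $(\alpha,\beta,\psi)$, shows $-\lambda\in\Lambda$ as well.

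To conclude, the second derivatives $\ell''[\lambda]$, $H_{xx}[\lambda]$ and $H_{ux}[\lambda]$ entering \eqref{chap1Omega} all depend linearly on $\lambda$, so $\Omega[-\lambda]=-\Omega[\lambda]$, and \eqref{chap1NC2eq} follows. The only step with any delicacy is the adjoint identity displayed above, together with the passage from $\int H_u[\lambda]v\,\dtt=0$ for all $v\in\U$ to $H_u[\lambda]\equiv 0$; once the continuity of $H_u[\lambda]$ is invoked, everything else is bookkeeping.
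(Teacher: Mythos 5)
Your proof is correct and follows the same high-level strategy as the paper: exploit degeneracy of $\bar\eta'(\uh)$ to produce $\beta\neq0$, form $\lambda=(0,\beta,\psi)$, check that both $\lambda$ and $-\lambda$ belong to $\Lambda$, and conclude from the linearity of $\Omega$ in $\lambda$ that $\max\{\Omega[\lambda],\Omega[-\lambda]\}\geq0$. The difference lies in how the costate is chosen and how the minimum condition \eqref{chap1maxcond} is verified. The paper asserts directly that degeneracy gives $\beta$ with $\sum_j\beta_j\eta_j'(\xh(T))=0$ as a covector in $\cR^{n,*}$, which makes $\psi\equiv0$ an admissible costate and collapses $\Omega[\lambda]$ to the single term $\half\sum_j\beta_j\eta_j''(\xh(T))(z(T))^2$. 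You instead take the weaker (and in fact the only immediate) consequence of degeneracy, namely $\beta\,\bar\eta'(\uh)v=0$ for all $v$, solve the costate equation with the genuine transversality datum, and then invoke the adjoint identity $\int_0^TH_u[\lambda]v\,\dtt=\psi(T)z(T)=0$ for all $v$ to deduce $H_u[\lambda]\equiv0$, whence the minimum condition holds trivially. This is a bit longer but arguably more robust: the paper's assertion that $\beta\,\eta'(\xh(T))$ vanishes identically (not merely on the reachable subspace of the linearized system) needs the linearized system to be controllable, or at least an additional remark, whereas your route sidesteps that point entirely. Either way the conclusion follows from the antisymmetry $\Omega[-\lambda]=-\Omega[\lambda]$, which you correctly identify as the crux.
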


\begin{proof}
Notice that there exists $\beta\neq 0$ such that $\sum_{j=1}^{d_{\eta}}\beta_j\eta_j'(\xh(T))=0,$ since  $\bar{\eta}'(\uh)$ is not onto.  Consider $\alpha=0$ and $\psi=0.$ Take $\lambda:=(\alpha,\beta,\psi)$ and notice that both $\lambda$ and $-\lambda$ are in $\Lambda.$
Observe that
\benl
\Omega\lam(z,v)=\half\sum_{j=1}^{d_{\eta}}\beta_j\eta_j''(\xh(T))(z(T))^2.
\eenl
Thus $\Omega\lam(z,v)\geq 0$ either for $\lambda$ or $-\lambda.$ The required result follows.
\end{proof}

\if{

Consider the following auxiliary problem, where variable $\zeta$ is in $\cR:$
\be\label{chap1Pzeta}\tag{P$_{\zeta}$}
\begin{split}
\zeta\rightarrow \min,\quad&\bar\varphi_0(u)-\bar\varphi_0(\uh)\leq \zeta,\ \bar\varphi_i(u)\leq \zeta,\ i=1,\hdots,d_{\varphi},\\ &\bar\eta(u)=0,\ -u(t)\leq \zeta,\ \mr{a.e}.
\end{split}
\ee

\begin{proposition} If $\uh$ is a local weak solution of \eqref{chap1P} then $(\uh,0)$ is a local solution of \eqref{chap1Pzeta} in the $\U\times \cR-$ norm.
\end{proposition}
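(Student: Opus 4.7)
The plan is a short contradiction argument. First, observe that $(\uh,0)$ is feasible for (P$_\zeta$): since $\uh$ is feasible for (P), we have $\bar\varphi_0(\uh)-\bar\varphi_0(\uh)=0$, $\bar\varphi_i(\uh)\le 0$ for $i=1,\dots,d_\varphi$, $\bar\eta(\uh)=0$, and $-\uh(t)\le 0$ a.e., so all constraints of (P$_\zeta$) hold with $\zeta=0$.

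Next, I would argue by contradiction. Suppose $(\uh,0)$ is not a local solution of (P$_\zeta$) in the $\U\times\cR$-norm. Then there exists a sequence $(u_k,\zeta_k)$, feasible for (P$_\zeta$), with $\|u_k-\uh\|_\infty\to 0$ and $\zeta_k\to 0$, such that $\zeta_k<0$ for every $k$ (since the objective of (P$_\zeta$) is $\zeta$ and the value at $(\uh,0)$ is $0$).

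Using $\zeta_k<0$, the constraints of (P$_\zeta$) yield: $\bar\varphi_i(u_k)\le \zeta_k<0$ for $i=1,\dots,d_\varphi$, $\bar\eta(u_k)=0$, and $-u_k(t)\le \zeta_k<0$ a.e., so in particular $u_k\in\U_+$. Hence each $u_k$ is feasible for (P). Moreover, $\bar\varphi_0(u_k)\le \bar\varphi_0(\uh)+\zeta_k<\bar\varphi_0(\uh)$.

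Finally, since $\|u_k-\uh\|_\infty\to 0$, by continuous dependence of the state on the control in $L_\infty$ the associated trajectories $x_k$ satisfy $\|x_k-\xh\|_\infty\to 0$. Thus for large $k$ the pairs $(x_k,u_k)$ lie in every $L_\infty$-neighborhood of $\wh$ used in the definition of weak minimality, are feasible for (P), and strictly improve the cost, contradicting the local weak optimality of $\uh$. The only minor point to check carefully is this last step, the $L_\infty$-continuity of the control-to-state map, which is immediate from Gronwall's inequality applied to the state equation \eqref{chap1stateeq} and the $C^2$ assumption on the $f_i$; this is the only ``technical'' ingredient in an otherwise direct argument.
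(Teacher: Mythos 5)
Your proof is correct and follows essentially the same contradiction argument as the paper: assume a feasible competitor with $\zeta<0$ arbitrarily close to $(\uh,0)$, observe that the constraints of (P$_\zeta$) with $\zeta<0$ force $u$ to be feasible for (P) with a strictly smaller cost, and contradict the local weak optimality of $\uh$. The one cosmetic difference is your appeal to Gronwall for $\Linf$-continuity of the control-to-state map; since (P) is already posed as a problem over $u\in\Uspace$ alone (with $x$ eliminated through $\bar\varphi$ and $\bar\eta$), that step is not strictly needed, though it is harmless and reflects the equivalence with the weak-minimum notion for the original $(x,u)$-problem.
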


\begin{proof}
Let us show that there exists a neighborhood of $(\uh,0)$ where $(\uh,0)$ is a minimum.
As $\uh$ is a local weak solution of \eqref{chap1P}, there exists a neighbourhood $\mathcal{O}$ of $\uh$ where it is a minimum. Take $u\in\mathcal{O}$ and $\zeta\in\cR,$ and show that $(\uh,0)$ is a minimum in this set. If $\zeta\geq0$ then $(u,\zeta)$ is not a better solution than $(\uh,0),$ thus we have to consider $\zeta<0.$ Hence
\be
\bar\varphi_0(u)\leq \zeta+\bar\varphi_0(\uh)<\bar\varphi_0(\uh),
\ee
and $u$ is feasible for \eqref{chap1P}. This leads us to a contradiction.
\end{proof}

}\fi

Take $\varepsilon>0,$ $(z,v)\in \C_{\infty,\varepsilon},$ and rewrite \eqref{chap1epsiloncone} using the notation in \eqref{chap1barvarphi},
\benl
\begin{split}
\C_{\infty,\varepsilon}
=\{&(z,v)\in \X\times \Uspace:\
v_i(t)=0\ \mr{on}\ I_{\varepsilon}^i,\ \ i=1,\hdots,m,\\
&\eqref{chap1lineareq}\ \mr{holds},\
\bar\varphi'_i(\uh)v\leq 0,\ i=0,\hdots,d_{\varphi},\ \bar\eta'(\uh)v=0\}.
\end{split}
\eenl
Consider the problem
\be\label{chap1QPv}\tag{QP$_v$}
\begin{split}
&\delta \zeta\rightarrow \min\\
&\bar\varphi_i'(\uh)r+\bar\varphi_i''(\uh)(v,v)\leq \delta \zeta,\,\mr{for}\ i=0,\hdots,d_{\varphi},\\
&\bar\eta'(\uh)r+\bar\eta''(\uh)(v,v)=0,\\
&-r_i(t)\leq \delta \zeta,\ \mr{on}\ I^i_0,\ \mr{for}\ i=1,\hdots,m.
\end{split}
\ee

\begin{proposition} \label{chap1valgeq0} Let $(z,v)\in \C_{\infty,\varepsilon}.$
If the equality constraints are nondegenerate,
problem \eqref{chap1QPv} is feasible and ${\rm val}\,\eqref{chap1QPv}\geq 0.$
\end{proposition}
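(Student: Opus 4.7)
This is immediate from nondegeneracy: since $\bar\eta'(\uh):\U\to\cR^{d_\eta}$ is surjective, pick any $r_0\in\U$ with $\bar\eta'(\uh)r_0=-\bar\eta''(\uh)(v,v)$, and then choose $\delta\zeta$ large enough to accommodate the finitely many scalar inequalities $\bar\varphi_i'(\uh)r_0+\bar\varphi_i''(\uh)(v,v)\leq\delta\zeta$ and the uniform bound $\|r_0\|_\infty\leq\delta\zeta$. The pair $(r_0,\delta\zeta)$ is then feasible for (QP$_v$).

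\textbf{Non-negativity of the value.} I argue by contradiction: assume $(r,\delta\zeta)$ is feasible for (QP$_v$) with $\delta\zeta<0$. The plan is to construct an admissible perturbation $u_\sigma\in\U^+$ of $\uh$ satisfying $\bar\eta(u_\sigma)=0$ and $\bar\varphi_0(u_\sigma)<0=\bar\varphi_0(\uh)$ for all small enough $\sigma>0$, contradicting the weak local optimality of $\uh$. I take
\[
u_\sigma := \uh + \sigma v + \tfrac{\sigma^2}{2}\bigl(r+\xi(\sigma)\bigr), \qquad \sigma\in(0,\sigma_0),
\]
where $\xi(\sigma)\in\U$ is a correction term produced by Lyusternik's theorem applied to the smooth mapping $\bar\eta$ at $\uh$ --- this is exactly the place where the nondegeneracy hypothesis is used. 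The correction satisfies $\|\xi(\sigma)\|_\infty\to 0$ as $\sigma\to 0$ and enforces $\bar\eta(u_\sigma)=0$ exactly, because $\bar\eta'(\uh)v=0$ by criticality and $\bar\eta'(\uh)r+\bar\eta''(\uh)(v,v)=0$ by (QP$_v$). A direct second order Taylor expansion of $\bar\varphi_i$ around $\uh$ then gives
\[
\bar\varphi_i(u_\sigma) = \sigma\,\bar\varphi_i'(\uh)v + \tfrac{\sigma^2}{2}\bigl[\bar\varphi_i'(\uh)r+\bar\varphi_i''(\uh)(v,v)\bigr] + o(\sigma^2),
\]
where the first order term is $\leq 0$ by $(z,v)\in\C_{\infty,\varepsilon}$ and the bracket is $\leq\delta\zeta<0$ by feasibility in (QP$_v$). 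So for small $\sigma>0$, $\bar\varphi_0(u_\sigma)<0$ and $\bar\varphi_i(u_\sigma)\leq 0$ for $i\geq 1$.

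\textbf{Nonnegativity of $u_\sigma$ and main obstacle.} It remains to check $u_\sigma\in\U^+$. On the set $\{\uh_i>\varepsilon\}$ the value $\uh_i$ is uniformly bounded below, so a perturbation of size $\mathcal{O}(\sigma)$ keeps $u_{\sigma,i}$ positive. On $I_0^i=\{\uh_i=0\}\subset I_\varepsilon^i$ one has $v_i=0$, hence $u_{\sigma,i}=\tfrac{\sigma^2}{2}(r_i+\xi_i(\sigma))$; the constraint $-r_i\leq\delta\zeta<0$ from (QP$_v$) forces $r_i>0$ on $I_0^i$, so positivity holds for small $\sigma$. The delicate stratum is the intermediate layer $I_\varepsilon^i\setminus I_0^i=\{0<\uh_i\leq\varepsilon\}$: there $v_i=0$ and $\uh_i>0$ by strict complementarity, but $\uh_i$ may be arbitrarily small while $r_i$ carries no sign. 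This is the main difficulty of the argument: one must truncate $r_i$ pointwise on the (small) subset where $u_{\sigma,i}$ would otherwise become negative, and then verify that this modification perturbs both the Taylor expansion of $\bar\varphi_i$ and the Lyusternik correction $\xi(\sigma)$ only by $o(\sigma^2)$, so that the contradiction $\bar\varphi_0(u_\sigma)<0$ survives.
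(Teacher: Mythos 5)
Your feasibility argument and the contradiction scheme match the paper's, and you correctly single out the intermediate stratum $\{0<\uh_i\leq\varepsilon\}$ as the delicate one. But the proposal stops exactly where the real work begins: you assert that one must truncate $r_i$ on the bad set and then ``verify that this modification perturbs both the Taylor expansion of $\bar\varphi_i$ and the Lyusternik correction only by $o(\sigma^2)$,'' yet this estimate --- the crux of the proof --- is never established. The paper handles it by introducing a strictly positive slack: one works with the bound $\tilde u(\sigma)_i(t)\geq -\tilde\zeta(\sigma)$, where $\tilde\zeta(\sigma)=\tfrac12\sigma^2\delta\zeta<0$, rather than with $u\geq 0$ directly. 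Since $v_i=0$ on $I_\varepsilon^i$, the set where this bound fails is contained in $J^i_\sigma:=\{\,0<\uh_i(t)<\tfrac12\sigma^2(\|r\|_\infty+|\delta\zeta|)\,\}$, whose measure tends to zero as $\sigma\to 0$; the quotient $|\tilde u(\sigma)_i+\tilde\zeta(\sigma)|/\sigma^2$ is dominated there by $\|r\|_\infty+|\delta\zeta|$, so the dominated convergence theorem gives $\int_{J^i_\sigma}|\tilde u(\sigma)_i+\tilde\zeta(\sigma)|\,\dtt=o(\sigma^2)$. Without this specific bound you have no control over how the truncation distorts $\bar\varphi_i$ and $\bar\eta$, and the contradiction does not close.

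There is also an order-of-operations problem in your scheme. You apply the Lyusternik correction $\xi(\sigma)$ first, to enforce $\bar\eta(u_\sigma)=0$, and propose to truncate afterward --- but the truncation destroys the equality constraint and would force an iteration. The paper truncates first, obtaining a control $\tilde{\tilde u}(\sigma)\geq-\tilde\zeta(\sigma)>0$ that sits at distance exactly $-\tilde\zeta(\sigma)=O(\sigma^2)$ (with a strictly positive coefficient) above zero, and only then invokes the surjectivity of $\bar\eta'(\uh)$ to correct the equality constraint by a perturbation of sup-norm $o(\sigma^2)$. The positive $O(\sigma^2)$ slack absorbs the $o(\sigma^2)$ correction, so nonnegativity survives. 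Introducing the slack is precisely what makes truncate-then-correct work cleanly; in your formulation, with $u_\sigma\geq 0$ as the target and no slack, the final correction could push the control negative again.
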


\begin{proof}
Let us first prove feasibility.
As $\bar\eta'(\uh)$ is onto, there exists $r\in\U$ such that the
equality constraint in \eqref{chap1QPv} is satisfied.
Take
\benl
\delta\zeta:=\max(\|r\|_{\infty},\bar\varphi_i'(\uh)r+\bar\varphi''(\uh)(v,v)).
\eenl
Thus the  pair $(r,\delta \zeta)$ is feasible for \eqref{chap1QPv}.

Let us now prove that ${\rm val}\,\eqref{chap1QPv}\geq 0.$ On the contrary suppose that there exists a feasible solution $(r,\delta \zeta)$ with $\delta \zeta<0.$ The last constraint in \eqref{chap1QPv} implies $\|r\|_{\infty}\neq0.$
 Set, for $\sigma>0,$
\be
\label{chap1usigma}
\tilde u(\sigma):=\uh+\sigma v+\half\sigma^2 r,\quad \tilde\zeta(\sigma):=\half \sigma^2\delta\zeta.
\ee
The goal is finding $u(\sigma)$ feasible for \eqref{chap1P} such that for small $\sigma,$
\benl
u(\sigma)\overset{\U}\rightarrow \uh,\,\,\mr{and}
\,\, \bar\varphi_0(u(\sigma))< \bar\varphi_0(\uh),
\eenl
contradicting the weak optimality of $\uh.$

Notice that $\uh_i(t)> \varepsilon$ a.e. on $[0,T]\backslash I_{\varepsilon}^i,$ and then
$\tilde u(\sigma)_i(t)>-\tilde \zeta(\sigma)$ for sufficiently small $\sigma.$
On $I_{\varepsilon}^i,$ if $\tilde u(\sigma)_i(t)<-\tilde \zeta(\sigma)$
then necessarily
\benl
 \uh_i(t)< \half\sigma^2(\|r\|_{\infty}+|\delta\zeta|),
\eenl
 as $v_i(t)=0.$
Thus, defining the set
\benl
J_{\sigma}^i:=\{t:0<\uh_i(t)<\half\sigma^2(\|r\|_{\infty}+|\delta\zeta|)\},
\eenl
we get $\{t\in[0,T]:\tilde u(\sigma)_i(t)<-\tilde \zeta(\sigma)\}
\subset J_{\sigma}^i.$
Observe that on $J_{\sigma}^i,$ the function $|\tilde u(\sigma)_i(t)+\tilde \zeta(\sigma)|/\sigma^2$ is dominated by $\|r\|_{\infty}+|\delta\zeta|.$ Since $\mr{meas}(J_{\sigma}^i)$ goes to 0 by the Dominated Convergence Theorem, we obtain
\benl
\label{chap1u1bound0}
\int_{J_{\sigma}^i}  |\tilde u(\sigma)_i(t)+\tilde \zeta(\sigma) |\dtt= o(\sigma^2).
\eenl
Take
\benl
\tilde{\tilde u}(\sigma):=
\left\{
\ba{rl}
\tilde{u}(\sigma)&\mr{on}\ [0,T]\backslash J_{\sigma}^i,\\
-\tilde\zeta(\sigma)& \mr{on}\ J_{\sigma}^i.
\ea
\right.
\eenl
Thus, $\tilde{\tilde u}$ satisfies
\be
\label{chap1strictpos}
\tilde{\tilde u}(\sigma)(t)\geq -\tilde \zeta(\sigma),\quad \mr{a.e.}\ \mr{on}\ [0,T],
\ee
\benl
\|\tilde{\tilde u}(\sigma)-\uh\|_1=o(\sigma^2),\quad \|\tilde{\tilde u}(\sigma)-\uh\|_{\infty}=O(\sigma^2),
\eenl
and the following estimates hold
\be
\label{chap1varphiest}
\begin{split}
\bar\varphi_i(\tilde {\tilde u}(\sigma))
&=
\bar\varphi_i(\uh)+\sigma\bar\varphi_i'(\uh)v
+\half\sigma^2[\bar\varphi_i'(\uh)r+\bar\varphi''_i(\uh)(v,v)]+o(\sigma^2)\\
&< \bar\varphi_i(\uh)+\tilde\zeta(\sigma)+o(\sigma^2),
\end{split}
\ee
\benl
\label{chap1etaest}
\bar\eta(\tilde {\tilde u}(\sigma))=
\sigma\bar\eta'(\uh)v
+\half\sigma^2[\bar\eta'(\uh)r+\bar\eta''(\uh)(v,v)]+o(\sigma^2)=o(\sigma^2).
\eenl
\if{
Notice that for $t\in I_0^i:$
\be
-\tilde u(\sigma)_i(t)=-\half\sigma^2r_i(t)< \tilde\zeta(\sigma),
\ee
and for $t\in (0,T)\backslash I_{\varepsilon}^i:$
\be
-\tilde u(\sigma)_i(t)=-\uh_i(t)-\sigma v_i(t)-\half\sigma^2r_i(t)<-\half\varepsilon<\tilde \zeta(\sigma),\quad \mr{for}\ \mr{small}\ \sigma,
\ee
as $\uh_i(t)>\varepsilon.$  Thus $
\{t\in[0,T]:-\tilde u(\sigma)_i(t)>\tilde \zeta(\sigma)\}
\subset
I_{\varepsilon}^i\backslash I_0^i.
$
}\fi
As $\bar\eta'(\uh)$ is onto on $\U$ we can find a corrected control $u(\sigma)$ satisfying the equality constraint and such that $\|u(\sigma)-\tilde{\tilde u}(\sigma)\|_{\infty}=o(\sigma^2).$  Deduce by \eqref{chap1strictpos} that $u(\sigma)\geq 0$ a.e. on $[0,T],$ and by \eqref{chap1varphiest} that it satisfies the terminal inequality constraints. Thus $u(\sigma)$ is feasible for \eqref{chap1P} and it satisfies \eqref{chap1usigma}. This contradicts the weak optimality of $\uh.$
\end{proof}

Recall that a \textit{Lagrange multiplier} associated with $\wh$ is a pair $(\lambda,\mu)$ in $\cR^{d_{\varphi}+1}\times \cR^{d_{\eta}}\times W_{\infty}^1(0,T;\cR^{n,*})\times \U^*$ with
$\lambda=(\alpha,\beta,\psi)$ satisfying \eqref{chap1nontriv}, \eqref{chap1alphapos}, $\mu\geq 0$ and the \textit{stationarity condition}
\benl
\label{chap1HSC}
\intT H_u\lam(t)v(t)\dtt+\intT v(t)\mr{d}\mu(t)=0,\quad \mr{for}\ \mr{every}\ v\in \U.
\eenl
Here $\U^*$ denotes the dual space of $\U.$
Simple computations show that $(\lambda,\mu)$ is a Lagrange multiplier if and only if $\lambda$ is a Pontryagin multiplier and $\mu=H_u\lam.$ Thus $\mu\in L_{\infty}(0,T;\cR^{m,*}).$

Let us come back to Theorem \ref{chap1NC2}.

\begin{proof}

[of Theorem \ref{chap1NC2}]
Lemma \ref{chap1notqualified} covers the degenerate case.
Assume thus that $\bar\eta'(\uh)$ is onto. Take $\varepsilon\gr 0$  and $(z,v)\in \C_{\infty,\varepsilon}.$   Applying Proposition \ref{chap1valgeq0}, we see that there cannot exist $r$ and $\delta \zeta<0$ such that
\benl
\begin{split}
&\bar\varphi_i'(\uh)r+\bar\varphi_i''(\uh)(v,v)\leq \delta \zeta,\ i=0,\hdots,d_{\varphi},\\
&\bar\eta'(\uh)r+\bar\eta''(\uh)(v,v)=0,\\
&-r_i(t)\leq \delta \zeta,\ \mr{on}\ I^i_0,\ \mr{for}\ i=1,\hdots,m.
\end{split}
\eenl
By the Dubovitskii-Milyutin Theorem (see \cite{DubMil}) we obtain the existence of $(\alpha,\beta)\in \cR^s$ and $\mu\in \U^*$ with $\mr{supp}\ \mu_i\subset I_0^i,$ and $(\alpha,\beta,\mu)\neq 0$ such that
\be\label{chap1firstorder}
\sum_{i=0}^{d_{\varphi}}\alpha_i\bar\varphi_i'(\uh)+\sum_{i=1}^{d_{\eta}}
\beta_j\bar\eta_j'(\uh)
-\mu=0,
\ee
and denoting $\lambda:=(\alpha,\beta,\psi),$ with $\psi$ being solution of \eqref{chap1costateeq}, the following holds:
\benl
\sum_{i=0}^{d_{\varphi}}\alpha_i\bar\varphi_i''(\uh)(v,v)
+\sum_{i=1}^{d_{\eta}}
\beta_j\bar\eta_j''(\uh)(v,v)\geq 0.
\eenl
By Lemma \ref{chap1barell} we obtain
\be\label{chap1SecondDer}
\Omega\lam(z,v)\geq 0.
\ee
Observe that \eqref{chap1firstorder} implies that $\lambda\in\Lambda.$
Consider now $(\zb,\vb)\in \C_2,$ and note that Lemma \ref{chap1conedense} guarantees the existence of a sequence
$\{(z_{\varepsilon},v_{\varepsilon})\}\subset \C_{\infty,\varepsilon}$ converging to $(\zb,\vb)$ in $\X_2\times \U_2.$
Recall Remark \ref{chap1Lambdacompact}. Let $\lambda_{\varepsilon}\in\Lambda$ be such that  \eqref{chap1SecondDer} holds for $(\lambda_{\varepsilon},z_{\varepsilon},v_{\varepsilon}).$
Since $(\lambda_{\varepsilon})$ is bounded, it contains a
limit point $\bar{\lambda}\in\Lambda.$ Thus \eqref{chap1SecondDer} holds for $(\bar\lambda,\zb,\vb),$ as required.
\end{proof}

\section{Goh Transformation}

Consider an arbitrary linear system: 
\begin{equation} \label{chap1lineareqG}
\left\{
\begin{array}{l}
\dot{z}(t)=A(t)z(t)+B(t)v(t),\quad {\rm a.e.}\ {\rm on}\ [0,T],\\
z(0)=0,
\end{array}
\right.
\end{equation}
where $A(t)\in \L(\cR^n;\cR^n)$ is an essentially bounded function of $t,$ and $B(t)\in
\L(\cR^m;\cR^n)$ is a Lipschitz-continuous function of $t.$
With each $v\in \Uspace$ associate the
state variable $z\in \mathcal{X}$ solution of \eqref{chap1lineareq}. Let us present a transformation of the variables $(z,v)\in\W,$ first introduced by Goh in \cite{Goh66a}.
Define two new state variables as follows:
\be \label{chap1yxi}
\left\{
\begin{split}
y(t)&:=\ds\int_0^t v(s)\mathrm{ds},\\
{\xi}(t)&:=z(t)-B(t)y(t).
\end{split}
\right.
\ee
Thus $y\in \mathcal{Y}:=W_{\infty}^1(0,T;\cR^m),$ $y(0)=0$ and $\xi$ is an element of space $\X.$ It easily follows that $\xi$ is  a solution of the linear differential equation
\be\label{chap1xieq}
\dot{\xi}(t)=A(t)\xi(t)+B_1(t)y(t),\quad  \xi(0)=0,
\ee
where
\be
\label{chap1B1}
B_1(t):=A(t)B(t)-\dot{B}(t).
\ee
For the purposes of this article take
\be\label{chap1AB}
A(t):=\sum_{i=0}^m \uh_if_i'(\xh(t)),\ \mr{and}\
B(t)v(t):=\sum_{i=1}^mv_i(t)f_i(\uh(t)).
\ee
Then \eqref{chap1lineareqG} coincides with the linearized equation \eqref{chap1lineareq}.

\subsection{Transformed critical directions}
As optimality conditions on the variables
obtained by the Goh Transformation will be derived,
a new set of critical directions is needed. Take a point
$(z,v)$ in $\C_{\infty},$ and define $\xi$ and $y$ by the  transformation \eqref{chap1yxi}. Let $h:=y(T)$ and notice that since \eqref{chap1linearcons} is satisfied, the following inequalities hold,
\be
\label{chap1translinearcons}
\begin{split}
&\varphi_i'(\xh(T))(\xi(T)+B(T)h)\leq 0,\,\mr{for}\,\,
i=0,\hdots,d_{\varphi},\\
&\eta_j'(\xh(T))(\xi(T)+B(T)h)=0,\,\mr{for}\,\, j=1,\hdots,d_{\eta}.
\end{split}
\ee
Define the set of \textit{transformed critical directions}
\benl
\label{chap1primc2}
\mathcal{P}:=
\left\{
\begin{split}
&(\xi,y,h)\in \X\times\Y\times \cR^m:\dot{y}_i=0\ \mathrm{over}\
I^i_0,\ y(0)=0,\ h:=y(T),\\
&\eqref{chap1xieq}\ \mr{and}\ \eqref{chap1translinearcons}\ \mr{hold}
\\
\end{split}
\right\}.
\eenl
Observe that for every $(\xi,y,h)\in \P$ and $1\leq i\leq m,$
\be
\label{chap1yconstant}
y_i\ \mr{is}\ \mr{constant}\ \mr{over}\ \mr{each}\ \mr{connected}\ \mr{component}\ \mr{of}\ I_0^i,
\ee
and at the endpoints the following conditions hold
\be
\label{chap1yi0}
\begin{split}
&y_i=0\ \mr{on}\
[0,d_1^i),\ \mr{if}\ 0\in I_0^i,\ \mr{and}\\
&y_i=h_i\ \mr{on}\
(c_{N_i}^i,T],\ \mr{if}\ T\in I^i_0,
\end{split}
\ee
where $c_1^i$ and $d_1^i$ were introduced in Assumption 2.
Define the set
 \benl
\label{chap1pc2}
\P_2:= \left\{
(\xi,y,h)\in \X_2\times\Uspace_2\times \cR^m:
\eqref{chap1xieq},\,\eqref{chap1translinearcons},\,\eqref{chap1yconstant}\ \mr{and}\ \eqref{chap1yi0}\ \mr{hold}
\right\}.
\eenl

\begin{lemma}\label{chap1DenseC2}
 $\P$ is a dense subset of $\P_2$ in the $\X_2\times \Uspace_2\times
\cR^m-$topology.
\end{lemma}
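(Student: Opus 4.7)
The plan is to remove the terminal (in)equalities \eqref{chap1translinearcons} from the picture first, establish the density without them by a direct construction, and then reintroduce the terminal constraints through Lemma \ref{chap1lemmadense}. To this end, introduce the auxiliary sets
\bean
\Q_2 &:=& \{(\xi,y,h) \in \X_2 \times \U_2 \times \cR^m : \eqref{chap1xieq},\ \eqref{chap1yconstant}\ \mr{and}\ \eqref{chap1yi0}\ \mr{hold}\},\\
\Q &:=& \{(\xi,y,h) \in \X \times \Y \times \cR^m : \dot y_i = 0 \mr{\ on\ } I^i_0,\ y(0)=0,\ h=y(T),\ \eqref{chap1xieq}\ \mr{holds}\}.
\eean
The inclusion $\Q \subset \Q_2$ holds because $\dot y_i = 0$ on $I^i_0$ together with $y(0)=0$ and $h=y(T)$ forces \eqref{chap1yconstant} and \eqref{chap1yi0}. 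Writing $\P_2 = \Q_2 \cap F$ and $\P = \Q \cap F$, where $F$ is the finite-faced cone in $\X_2\times\U_2\times\cR^m$ cut out by the finite system of linear (in)equalities \eqref{chap1translinearcons} in $(\xi(T),h)$, the task reduces to showing that $\Q$ is dense in $\Q_2$ in the $\X_2\times\U_2\times\cR^m$-topology; Lemma \ref{chap1lemmadense} will then transfer the density to the cones.

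For the density of $\Q$ in $\Q_2$, given $(\bar\xi,\bar y,\bar h) \in \Q_2$, I would build $y^n \in \Y$ componentwise as follows. On each connected component of $I^i_0$, $\bar y_i$ is a fixed constant by \eqref{chap1yconstant}, and I set $y^n_i$ equal to that same constant, with value $0$ on the leftmost component if $0 \in I^i_0$ and value $\bar h_i$ on the rightmost if $T \in I^i_0$, as prescribed by \eqref{chap1yi0}. On each component of $I^i_+$ the function $\bar y_i$ is arbitrary in $L_2$; I first mollify it and then modify the mollification inside boundary layers of width $1/n$ at the endpoints so as to interpolate linearly to the constants assigned above. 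Since those modifications live on intervals of measure $O(1/n)$ with bounded amplitude, they contribute an $L_2$-error of order $O(1/\sqrt n) \to 0$. When $0$ or $T$ lies in $I^i_+$ the same boundary-layer trick enforces $y^n(0)=0$ and allows the choice $h^n_i := y^n_i(T) = \bar h_i$. Thus $y^n \to \bar y$ in $\U_2$ and $h^n \to \bar h$. Defining $\xi^n$ as the solution of \eqref{chap1xieq} driven by $y^n$, a Gronwall estimate on $\xi^n - \bar\xi$ yields $\xi^n \to \bar\xi$ in $L_\infty$, and \eqref{chap1xieq} itself then gives $\dot\xi^n \to \dot{\bar\xi}$ in $L_2$, so $(\xi^n, y^n, h^n) \in \Q$ converges to $(\bar\xi,\bar y,\bar h)$ in $\X_2 \times \U_2 \times \cR^m$.

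To conclude, observe that $\Q_2$ is a linear subspace of $\X_2\times\U_2\times\cR^m$, hence a locally convex space in the induced topology; that $\P_2 = \Q_2\cap F$ is a finite-faced cone in $\Q_2$; and that $\Q$ is a linear manifold, dense in $\Q_2$ by the previous paragraph. An application of Lemma \ref{chap1lemmadense} with $X = \Q_2$, $C = \P_2$, $L = \Q$ gives $\P = \P_2 \cap \Q$ dense in $\P_2$. The main obstacle in this argument is that a naive smoothing of $\bar y$ would in general destroy the terminal equality constraints $\eta_j'(\xh(T))(\xi(T) + B(T)h) = 0$; Lemma \ref{chap1lemmadense} is precisely what allows one to bypass this by decoupling the finite-faced terminal cone from the constructive part of the approximation.
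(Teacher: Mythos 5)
Your proposal is correct and follows essentially the same route as the paper: split off the terminal (in)equalities \eqref{chap1translinearcons} as a finite-faced cone, reduce to density of the constraint-free linear spaces, and then invoke Lemma \ref{chap1lemmadense}. The constructive density step you carry out inline (mollify, then fix boundary layers by affine interpolation to the prescribed constants on $I^i_0$ and at the endpoints) is exactly what the paper delegates to Lemma \ref{chap1PdenseHadd} in the appendix.
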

\begin{proof} The inclusion is immediate. In order to prove the density, consider the following sets.
\benl
X:=\{(\xi,y,h)\in \X_2\times\Uspace_2\times \cR^m: \eqref{chap1xieq},\, \eqref{chap1yconstant}\ \mr{and}\ \eqref{chap1yi0}\ \mr{hold}\},
\eenl
\benl
L:=\{(\xi,y,y(T))\in
\X\times\Y\times \cR^m:\, y(0)=0,\ \eqref{chap1xieq}\ \mr{and}\ \eqref{chap1yconstant}\ \mr{hold}\}, \eenl \benl
C:=
\left\{
(\xi,y,h)\in X:\ \eqref{chap1translinearcons}\ \mr{holds}
\right\}.
\eenl
By Lemma \ref{chap1PdenseHadd}, $L$ is a dense subset of $X.$ The conclusion follows with Lemma \ref{chap1lemmadense}.
\end{proof}

\subsection{Transformed second variation}
We are interested in
writing $\Omega$ in terms of variables $y$ and $\xi$ defined in \eqref{chap1yxi}. Introduce
the following notation for the sake of simplifying the presentation.
\begin{definition}
 Consider the following matrices of sizes $n\times n,m\times
n$ and $m\times n,$ respectively.
\be
\label{chap1QCM}
Q\lam:=H_{xx}\lam,\quad C\lam:=H_{ux}\lam, \quad M\lam:=B^\top Q\lam- \dot C\lam-C\lam A,
\ee
where $A$ and $B$ were defined in \eqref{chap1AB}. Notice that $M$ is well-defined as $C$ is Lipschitz-continuous on $t.$
 Decompose matrix $C\lam B$ into its symmetric
and skew-symmetric parts, i.e.  consider
\be
\label{chap1SV}
S\lam:=\half(C\lam B+(C\lam B)^\top),\quad V\lam:=\half(C\lam B-(C\lam B)^\top).
\ee
\end{definition}

\begin{remark}
Observe that, since $C\lam$ and $B$ are Lipschitz-continuous, $S\lam$ and $V\lam$ are Lipschitz-continuous as well. In fact, simple computations yield
\be
\label{chap1Vind}
S_{ij}\lam=\half\psi(f_i'f_j+f_j'f_i),\quad V_{ij}\lam=\half\psi[f_i,f_j],\quad \mr{for}\ i,j=1,\hdots,m,
\ee
where
\be
[f_i,f_j]:=f_i'f_j-f_j'f_i.
\ee
\end{remark}

With this notation, $\Omega$ takes the form
\benl
\Omega\lam(\delta x,v)=\half \ell''\lam (\xh(T))( \delta x(T))^2
+\half\int_0^T[(Q\lam \delta x,\delta x)+2 (C\lam\delta x,v)]\dtt.
\eenl
Define the $m\times m$ matrix
\be
\label{chap1R}
R\lam:=B^\top Q\lam B-C\lam B_1-(C\lam B_1)^\top-\dot S\lam,
\ee
where $B_1$ was introduced in equation \eqref{chap1B1}. Consider
the function $g\lam$ from $\cR^n\times \cR^m$ to $\cR$ defined
by:
\be
\label{chap1g}
g\lam(\zeta,h):=
\half\ell''\lam(\xh(T))(\zeta+B(T)h)^2
+
\half(C\lam(T)(2\zeta+B(T)h),h).
\ee

\begin{remark}
\begin{itemize}
\item[(i)] We use the same notation for the matrices $Q\lam,$ $C\lam,$
$M\lam,$ $\ell''\lam(\xh(T))$ and for the
bilinear mapping they define.
\item[(ii)] Observe that when $m=1,$  the function $V\lam \equiv 0$ since it becomes a skew-symmetric scalar.
\end{itemize}
\end{remark}

\begin{definition}
 Define the mapping over $\X\times\Y\times\U$ given by
\be
\label{chap1OmegaG}
\begin{split}
\Omega_{\P}&[\lambda](\xi,y,v):=g[\lambda](\xi(T),y(T))\\
&+\int_0^T\{\half(Q[\lambda]\xi,\xi)+2(M[\lambda]\xi,y)
+\half(R[\lambda]y,y)+(V\lam y,v)\}\dtt,
\end{split}
\ee
with $g\lam,$ $Q\lam,$ $M\lam,$ $R\lam$ and $V\lam$ defined in
\eqref{chap1QCM}-\eqref{chap1g}.
\end{definition}

The following theorem shows that $\Omega_{\P}$ coincides with $\Omega.$ See e.g. \cite{Dmi87}.

\begin{theorem}
\label{chap1secondvariation}
Let $(z,v)\in \W$ satisfying \eqref{chap1lineareq} and $(\xi,y)$ be defined by \eqref{chap1yxi}. Then
\benl
\label{chap1Omeganew}
\Omega[\lambda](z,v)=\Omega_{\P}\lam (\xi,y,v).
\eenl
\end{theorem}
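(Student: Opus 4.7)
The strategy is a direct change of variables followed by integration by parts. I would substitute $z = \xi + By$ and $v = \dot y$ (both immediate from \eqref{chap1yxi}) into the formula for $\Omega\lam(z,v)$, expand the quadratic forms, and reorganize using $y(0)=0$ together with $\dot\xi = A\xi + B_1 y$ from \eqref{chap1xieq}. The only delicate point is that the Lipschitz matrix $CB$ must be split into its symmetric and skew-symmetric parts $S$ and $V$ \emph{before} performing the integration by parts: only the symmetric part can be absorbed into a total derivative, while $V$ must remain inside the integral as $(Vy,v)$.

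Concretely, I would first expand
\[
(Qz,z) = (Q\xi,\xi) + 2(B^\top Q\xi,y) + (B^\top QB\,y,y),\qquad (Cz,v) = (C\xi,v) + (CB\,y,v).
\]
An integration by parts in $\int_0^T (C\xi,\dot y)\dtt$, followed by use of $\dot\xi = A\xi + B_1 y$ and $y(0)=0$, produces the boundary term $(C(T)\xi(T),h)$ and the integrands $-(\dot C\xi,y) - (CA\xi,y) - (CB_1 y,y)$. Combining the $\xi$-$y$ cross contributions $(B^\top Q\xi,y)$ from $(Qz,z)$ and $-((\dot C+CA)\xi,y)$ from $(Cz,v)$ gives the $(M\xi,y)$ contribution matching the definition of $M$ in \eqref{chap1QCM}.

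For $(CB\,y,v) = ((S+V)y,\dot y)$ I would use the identity $2(Sy,\dot y) = \ddt(Sy,y) - (\dot S y, y)$, valid by symmetry of $S$; this yields the boundary term $(S(T)h,h)$ and the integrand $-(\dot S y,y)$, while the skew part simply remains as $(Vy,v)$. The resulting quadratic-in-$y$ integrands, $(B^\top QB\,y,y)$, $-2(CB_1 y,y)$ and $-(\dot S y,y)$, collapse to $(Ry,y)$ after symmetrizing the scalar $(CB_1 y,y) = \tfrac12(((CB_1)+(CB_1)^\top)y,y)$ and invoking the definition of $R$ in \eqref{chap1R}.

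Finally I would collect the endpoint contributions. The term $\tfrac12\ell''\lam(\xh(T))(z(T))^2$ becomes $\tfrac12\ell''\lam(\xh(T))(\xi(T)+B(T)h)^2$, and the boundary pieces produced above satisfy
\[
(C(T)\xi(T),h) + \tfrac12(S(T)h,h) = \tfrac12\bigl(C(T)(2\xi(T)+B(T)h),\,h\bigr),
\]
using $S(T) = \tfrac12(C(T)B(T)+(C(T)B(T))^\top)$ and $(V(T)h,h)=0$ by skew-symmetry. By \eqref{chap1g} this is exactly $g\lam(\xi(T),h)$, and assembling everything gives $\Omega\lam(z,v) = \Omega_{\P}\lam(\xi,y,v)$. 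The main obstacle is the bookkeeping of all the boundary and $\dot S$ terms produced by the integration by parts; the conceptual point is that the symmetric/skew decomposition $CB = S+V$ must be carried through before integrating by parts, so that $V$ naturally survives inside the integral while $S$ simultaneously feeds the boundary term $g$ and the $-\dot S$ summand in $R$.
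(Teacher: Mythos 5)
Your proposal is correct and follows essentially the same route as the paper's own proof: substitute $z=\xi+By$, $v=\dot y$ from the Goh transformation, integrate $\int(C\xi,v)\dtt$ and $\int(CBy,v)\dtt$ by parts using $\dot\xi=A\xi+B_1y$ and $y(0)=0$, and collect terms into $M$, $R$, $V$ and the boundary form $g$ (in particular exploiting that $(Vh,h)=0$ by skew-symmetry and that the $CB_1$ contribution symmetrizes). The paper's argument is just a terser version of the same bookkeeping.
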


\begin{proof}
We omit the dependence on $\lambda$ for the sake of
simplicity. Replace $z$ by its expression
in \eqref{chap1yxi} and obtain
\be
\begin {split}
\Omega&(z,v)
=
\half\ell''(\xh(T))(\xi(T)+B(T)y(T))^2\\
&+
\half \int_0^T [(Q(\xi+By),\xi+By)+(C(\xi+By),v)+(C^\top v,\xi+By) ]\dtt.
\label{chap1J2}
\end{split}
\ee
Integrating by parts yields
\be\label{chap1J2.2.1}
\int_0^T ( C\xi,v) \dtt
=[( C\xi,y)
]_0^T-\int_0^T(\dot{C}\xi+C(A\xi+B_1y),y)\dtt,
\ee
and
\be\label{chap1J2.2.2}
\begin{split}
\int_0^T ( CBy,v)\dtt
&=\int_0^T ( (S+V)y,v)\dtt\\
&=\half[( Sy,y)]_0^T+\int_0^T(-\half(\dot{S}y,y)+(
Vy,v))\dtt.
\end{split}
\ee
Combining \eqref{chap1J2}, \eqref{chap1J2.2.1} and \eqref{chap1J2.2.2} we get the desired result.
\end{proof}

\begin{corollary}
\label{chap1Omegas}
 If $V[\lambda]\equiv 0$ then $\Omega$ does not involve $v$ explicitly, and it can be
expressed in terms of $(\xi,y,y(T)).$
\end{corollary}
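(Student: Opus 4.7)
The plan is to read off the corollary directly from Theorem \ref{chap1secondvariation} together with the definition of $\Omega_{\P}$. Recall that Theorem \ref{chap1secondvariation} asserts the identity
\[
\Omega[\lambda](z,v) = \Omega_{\P}[\lambda](\xi,y,v),
\]
whenever $(\xi,y)$ is obtained from $(z,v)$ by the Goh transformation \eqref{chap1yxi}. Therefore it suffices to examine the expression \eqref{chap1OmegaG} for $\Omega_{\P}$ and locate exactly which terms involve the control variation $v$ explicitly (rather than through $y = \int_0^\cdot v\, ds$ and its endpoint $y(T)$).

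Inspecting \eqref{chap1OmegaG}, the boundary term $g[\lambda](\xi(T),y(T))$ depends only on $\xi(T)$ and $h := y(T)$, and the integrand contains the three quadratic expressions $(Q[\lambda]\xi,\xi)$, $(M[\lambda]\xi,y)$ and $(R[\lambda]y,y)$, which are functions of $(\xi,y)$ alone. The only term in \eqref{chap1OmegaG} where $v$ appears explicitly is the cross term $(V[\lambda]y,v)$. Under the hypothesis $V[\lambda] \equiv 0$ this term is identically zero, so the whole integrand reduces to a quadratic in $(\xi,y)$ only. This is the content of the corollary.

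There is no real obstacle: the proof is a one-line observation once Theorem \ref{chap1secondvariation} and the formula \eqref{chap1OmegaG} are in hand. To make it explicit I would simply write
\[
\Omega[\lambda](z,v) = g[\lambda](\xi(T),y(T)) + \int_0^T \!\bigl\{\tfrac{1}{2}(Q[\lambda]\xi,\xi) + 2(M[\lambda]\xi,y) + \tfrac{1}{2}(R[\lambda]y,y)\bigr\}\,\dtt,
\]
which manifestly depends only on $(\xi,y,y(T))$, and conclude. It is worth noting in passing (as the preceding remark already does) that in the scalar control case $m=1$ the matrix $V[\lambda]$ is automatically zero as a skew-symmetric scalar, so this corollary always applies in that setting.
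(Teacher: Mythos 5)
Your proposal is correct and matches the paper's (implicit) reasoning exactly: the corollary is an immediate reading-off from Theorem \ref{chap1secondvariation} and the formula \eqref{chap1OmegaG}, where the cross term $(V[\lambda]y,v)$ is the only place $v$ appears explicitly. The paper states this corollary without a separate proof precisely because it is this one-line observation.
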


In view of \eqref{chap1Vind}, the previous corollary holds in particular if $[f_i,f_j]=0$ on the reference trajectory
for each pair $1\leq i\mi j\leq m.$

\begin{corollary}\label{chap1CN2vector}
If $\wh$ is a weak minimum, then
 \benl
\max_{\lambda\in\Lambda}\Omega_{\P}[\lambda](\xi,y,v)\geq 0,
\eenl
for every $(z,v)\in \C_2$
and $(\xi,y)$ defined by \eqref{chap1yxi}.
\end{corollary}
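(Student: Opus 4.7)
The plan is to deduce this essentially as a direct consequence of Theorem \ref{chap1NC2} combined with Theorem \ref{chap1secondvariation}, after taking care of a minor regularity/density issue that comes from the fact that elements of $\C_2$ have only $L_2$ controls, whereas Theorem \ref{chap1secondvariation} is stated for $(z,v)\in \W$.

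First I would fix $(z,v)\in \C_2$ and set $(\xi,y)$ by the Goh transformation \eqref{chap1yxi}. Since $v\in \U_2$ on the bounded interval $[0,T]$, the primitive $y$ lies in $W^1_2(0,T;\cR^m)$ and is in particular continuous, and $\xi=z-By$ lies in $\X_2$. By Theorem \ref{chap1NC2}, since $\wh$ is a weak minimum, there exists some $\bar\lambda\in\Lambda$ such that $\Omega[\bar\lambda](z,v)\geq 0$.

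Next I would invoke Theorem \ref{chap1secondvariation} to convert $\Omega[\bar\lambda](z,v)$ into $\Omega_{\P}[\bar\lambda](\xi,y,v)$. The only subtlety is that Theorem \ref{chap1secondvariation} is stated for $(z,v)\in \W$, i.e.\ with $v\in L_\infty$, while here $v\in L_2$. The remedy is a straightforward density/continuity argument: the quadratic form $\Omega_{\P}[\bar\lambda]$ has Lipschitz (hence $L_\infty$) coefficients $Q,M,R,V$ and bounded boundary term $g$, so it is continuous on $\X_2\times W^1_2\times \cR^m$ in the obvious norms, and similarly $\Omega[\bar\lambda]$ is continuous on $\X_2\times \U_2$. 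Approximating $v$ by a sequence $v_k\in \U$ (with corresponding $z_k,\xi_k,y_k$ depending linearly and continuously on $v_k$), the identity $\Omega[\bar\lambda](z_k,v_k)=\Omega_{\P}[\bar\lambda](\xi_k,y_k,v_k)$ from Theorem \ref{chap1secondvariation} passes to the limit, giving $\Omega[\bar\lambda](z,v)=\Omega_{\P}[\bar\lambda](\xi,y,v)$ for the original $(z,v)\in \C_2$.

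Combining these two steps yields $\Omega_{\P}[\bar\lambda](\xi,y,v)\geq 0$, hence $\max_{\lambda\in\Lambda}\Omega_{\P}[\lambda](\xi,y,v)\geq 0$, which is the claim. The only potential obstacle is the density argument in the previous paragraph, and this is entirely routine because the Goh transformation $v\mapsto (z,\xi,y)$ is linear and continuous from $\U_2$ into $\X_2\times \X_2\times W^1_2$, and every term appearing in $\Omega$ and $\Omega_{\P}$ is a bounded bilinear (or boundary) form in these variables; nothing beyond standard continuous extension is needed.
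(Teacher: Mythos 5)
Your proof is correct and follows essentially the same route the paper intends: Corollary \ref{chap1CN2vector} is stated without its own proof because it is meant to be a direct combination of Theorem \ref{chap1NC2} (nonnegativity of the second variation on the $L_2$-critical cone) and Theorem \ref{chap1secondvariation} (the Goh-transform identity $\Omega=\Omega_\P$). Your extra density/continuity argument to bridge the gap between the $v\in L_\infty$ hypothesis in Theorem \ref{chap1secondvariation} and the $v\in L_2$ setting of $\C_2$ is a legitimate way to make the step fully rigorous, and the continuity you claim is easily checked: the only $v$-dependent terms are $\int(H_{ux}z,v)$ and $\int(V[\lambda]y,v)$, each bounded by a constant times $\|z\|_\infty\|v\|_1$ or $\|y\|_\infty\|v\|_1$, so they pass to the limit when $v_k\to v$ in $L_2$ and $z_k,y_k$ converge uniformly (via Gronwall and $W^1_2\hookrightarrow C[0,T]$). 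An alternative, equally acceptable shortcut is to observe that the proof of Theorem \ref{chap1secondvariation} is pure integration by parts and holds verbatim for $(z,v)\in\X_2\times\U_2$, so the $L_\infty$ hypothesis there is a mere convenience; either way the corollary follows.
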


\subsection{New second order condition}
In this section we present a necessary condition involving the variable $(\xi,y,h)$ in $\P_2.$ To achieve this we remove the explicit dependence on $v$ from the second variation, for certain subset of multipliers. Recall that we consider $\lambda=(\alpha,\beta)$ as elements of $\cR^s.$

\begin{definition}
Given $M\subset \cR^s,$  define
\benl
G(M):=
\{\lambda\in M:V_{ij}\lam(t)=0\ \mr{on}\ I^{i}_+\cap I^j_+,\ \mr{for}\ \mr{any}\ \mr{pair}\  1\leq i\mi j\leq m\}.
\eenl
\end{definition}

\begin{theorem}\label{chap1goh}
Let $M\subset \cR^s$ be convex and compact, and
assume that
\be
\label{chap1maxM}
\max_{\lambda\in M}\Omega_{\P}\lam(\xi,y,\dot y)\geq0,\quad \mr{for}\ \mr{all}\ \ (\xi,y,h)\in \P.
\ee
Then
\benl
\max_{\lambda\in G(M)}\Omega_{\P}\lam(\xi,y,\dot y)\geq0,\quad\mr{for}\ \mr{all}\ \ (\xi,y,h)\in \P.
\eenl
\end{theorem}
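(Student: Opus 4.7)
The plan is to reduce Theorem~\ref{chap1goh} to a finite-dimensional convex-separation argument, after using high-frequency oscillations in $y$ to realise any linear functional of $\lambda$ that vanishes on the defining subspace of $G(M)$. Define the linear map $T_{ij}\colon\cR^s\to L^\infty(I_+^i\cap I_+^j)$, $\lambda\mapsto V_{ij}\lam|_{I_+^i\cap I_+^j}$ (linear in $\lambda$ by \eqref{chap1Vind}), and set $K:=\bigcap_{i<j}\ker T_{ij}$, so that $G(M)=M\cap K$. The standard annihilator identity $K^\perp=\operatorname{Im}(\oplus T_{ij}^{\ast})$ tells us that every $L\in(\cR^s)^{\ast}$ with $L|_K=0$ can be represented as $L(\lambda)=\sum_{1\le i<j\le m}\int_{I_+^i\cap I_+^j}V_{ij}\lam(t)\,f_{ij}(t)\,\dtt$ for suitable smooth $f_{ij}$ compactly supported in $I_+^i\cap I_+^j$.

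The central ingredient I would prove first is a perturbation lemma: for every $(\xi^0,y^0,h^0)\in\P$ and every such $L\in K^\perp$, there exists a sequence $(\xi^n,y^n,h^n)\in\P$ with
\[
\Omega_{\P}\lam(\xi^n,y^n,\dot y^n)\longrightarrow\Omega_{\P}\lam(\xi^0,y^0,\dot y^0)+L(\lambda)\qquad\text{uniformly in }\lambda\in M.
\]
Decompose $f_{ij}=(\phi_{ij}^+)^2-(\phi_{ij}^-)^2$ with non-negative smooth $\phi_{ij}^\pm$ supported in the interior of $I_+^i\cap I_+^j$, and add to $y^0$ oscillatory wave packets of the form $\epsilon\phi_{ij}^+(t)\bigl(\cos\omega t\,e_i+\sin\omega t\,e_j\bigr)$ and $\epsilon\phi_{ij}^-(t)\bigl(\sin\omega t\,e_i+\cos\omega t\,e_j\bigr)$, taken at well-separated high frequencies $\omega_k\to\infty$. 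A direct calculation along the lines of Theorem~\ref{chap1secondvariation} shows that the corresponding $\tilde\xi^n$ is of order $O(\epsilon/\omega)$ uniformly in $t$ by integration by parts, and that every bilinear cross term against the fixed background $(\xi^0,y^0,\dot y^0)$ as well as every inter-packet interaction vanishes in the limit by the Riemann--Lebesgue lemma. The only surviving contribution is $\pm\epsilon^2\omega\int V_{ij}\lam(\phi_{ij}^\pm)^2\,\dtt$, and the signs of the rotations are chosen so that the total reproduces $L(\lambda)$. Since every packet is supported in the interior of $I_+^i\cap I_+^j$, conditions \eqref{chap1yconstant}--\eqref{chap1yi0} are automatic and $h^n=h^0$; the change in \eqref{chap1translinearcons} is $o(1)$ and is corrected exactly by a small displacement in a nondegenerate direction at $T$ (handling the degenerate case separately as in Lemma~\ref{chap1notqualified}).

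Granted the lemma, applying the hypothesis along $(\xi^n,y^n,h^n)$ and passing to the limit yields, for every $L\in K^\perp$,
\[
\max_{\lambda\in M}\bigl[f(\lambda)+L(\lambda)\bigr]\ge 0,\qquad f(\lambda):=\Omega_{\P}\lam(\xi^0,y^0,\dot y^0),
\]
where $f$ is affine in $\lambda$. Suppose for contradiction that $\max_{G(M)}f<0$ (the convention $\max_\emptyset=-\infty$ covers the case $G(M)=\emptyset$). Then the convex compact graph $\widetilde M:=\{(\lambda,f(\lambda)):\lambda\in M\}\subset\cR^s\times\cR$ is disjoint from the closed convex set $K\times[0,+\infty)$, so strict Hahn--Banach separation provides $(L,c)\in(\cR^s)^{\ast}\times\cR$ with $\sup_{\widetilde M}(L(\lambda)+cv)<\inf_{K\times[0,\infty)}(L(\lambda)+cv)$. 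Finiteness of the right-hand infimum forces $L|_K=0$, hence $L\in K^\perp$, and $c\ge 0$. If $c>0$, rescale to $c=1$ to obtain $\max_M(f+L)<0$; if $c=0$, $L$ is non-zero on $M$ with $\max_M L<0$ and scaling $L\mapsto kL$ with $k\to\infty$ gives $\max_M(f+kL)\to-\infty$. In either case the displayed inequality is contradicted.

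The main obstacle is the perturbation lemma: obtaining decay of all the cross terms and of $\tilde\xi^n$ \emph{uniformly in the compact parameter} $\lambda\in M$, and spreading the perturbations over well-separated frequencies so that the different pairs $(i,j)$ do not interact in the limit. This requires careful integration-by-parts and Riemann--Lebesgue bookkeeping, using the $L^\infty$-boundedness and Lipschitz continuity of $Q\lam,M\lam,R\lam,V\lam$ on $M$. Once this analytic step is secured, the conclusion follows from purely finite-dimensional convex geometry.
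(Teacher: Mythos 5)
Your proposal is essentially correct and captures the two load-bearing ideas of the paper's proof — an approximation lemma that, within $\P$, realizes the addition of an arbitrary annihilator functional to $\Omega_{\P}\lam(\bar\xi,\yb,\dot\yb)$, followed by a finite-dimensional convex duality argument — but it realizes both ideas by a genuinely different route. For the perturbation lemma, the paper does not use high-frequency oscillatory wave packets at all: it fixes a single $\tilde y$ with $\tilde y(0)=\tilde y(T)=0$ supported in the two components $i,j$, extends it by zero, and sets $\tilde y^{\nu}(t)=\tilde y(\nu(t-t^{*}))$, a self-similar concentration around a single interior point $t^{*}$. This makes the dominant term $\int(V\lam\tilde y^{\nu},\dot{\tilde y}^{\nu})\dtt$ converge to $V_{ij}\lam(t^{*})\int(y_i\dot y_j-y_j\dot y_i)\dtt$ by a change of variables, and the subspace $\R$ that it generates is spanned by the point-evaluation functionals $\lambda\mapsto V_{ij}\lam(t^{*})$ rather than by smeared integrals $\lambda\mapsto\int f_{ij}V_{ij}\lam\dtt$. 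This is somewhat more economical than your construction: there is no Riemann--Lebesgue bookkeeping, no need for a smooth decomposition $f_{ij}=(\phi^{+})^{2}-(\phi^{-})^{2}$ (whose existence you cannot take for granted for a general $f_{ij}$, and which you would have to engineer into the choice of spanning $f_{ij}$'s), and no inter-packet interactions to control when several pairs $(i,j)$ are perturbed simultaneously. For the duality step, the paper appeals to Rockafellar's MinMax theorem to interchange $\inf_{r\in\R}$ and $\max_{\lambda\in M}$, and then observes that the inner infimum is $-\infty$ unless $r\lam=0$ for all $r\in\R$; your Hahn--Banach separation in $\cR^{s}\times\cR$ is an equivalent finite-dimensional way of saying the same thing. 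One minor point where your write-up goes off track: the correction of the terminal constraints \eqref{chap1translinearcons} is handled in the paper uniformly via Hoffman's Lemma, with no nondegeneracy hypothesis and no splitting into cases; your invocation of Lemma~\ref{chap1notqualified} is misplaced, as that lemma proves Theorem~\ref{chap1NC2} directly in the degenerate case and plays no role in the proof of Theorem~\ref{chap1goh}.
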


The proof is based on some techniques introduced in Dmitruk \cite{Dmi77,Dmi87} for the proof of similar theorems.

Let $1\leq i \mi j \leq m$ and $t^*\in \mr{int}\ I^i_+\cap I^j_+.$ Take $y\in \Y$ satisfying
\be\label{chap1condyr}
y(0)=y(T)=0,\quad y_k=0,\ \mr{for}\ \  k\neq i,k\neq j.
\ee
Such functions define a linear continuous mapping $\mathpzc{r}:\cR^{s,*}\rightarrow \cR$ by
\be
\label{chap1defr}
\lambda\mapsto \mathpzc{r}\lam:=\intT (V\lam(t^*)y,\dot y)\dtt.
\ee
By condition \eqref{chap1condyr}, and since $V\lam$ is skew-symmetric,
\benl
\intT (V\lam(t^*)y,\dot y)\dtt=
V_{ij}\lam(t^*)\intT (y_i\dot{y}_j-y_j\dot{y}_i)\dtt.
\eenl
Each $\mathpzc{r}$ is an element of the dual space of $\cR^{s,*},$ and it can thus be identified with an element of $\cR^s.$ Consequently, the subset of $\cR^s$ defined by
\benl
R_{ij}(t^*):=\{\mathpzc{r}\in\cR^s:y\in\Y\ \mr{satisfies}\ \eqref{chap1condyr},\ \mathpzc{r}\ \mr{is}\ \mr{defined}\ \mr{by}\  \eqref{chap1defr}\},
\eenl
is a linear subspace of $\cR^s.$ Now, consider all the finite collections
\benl
\Theta_{ij}:=\left\{\theta=\{t^1 < \cdots <t^{N_{\theta}}\}:t^k\in \mr{int}\, I^i_+\cap I^j_+\ \mr{for}\ k=1,\hdots,N_{\theta}\right\}.
\eenl
Define
\benl
\R:=\sum_{i\mi j} \bigcup_{\theta\in \Theta_{ij}} \sum_{k=1}^{N_{\theta}} R_{ij}(t^k).
\eenl
Note that $\R$ is a linear subspace of $\cR^s.$ Given $(\xi,y,y(T))\in\P,$ let the mapping $\mathpzc{p}_{y}:\cR^{s,*}\rightarrow \cR$ be given by
\be
\label{chap1plam}
\lambda\mapsto \mathpzc{p}_y\lam:= \Omega_{\P}\lam(\xi,y,\dot{y}).
\ee
Thus, $\mathpzc{p}_y$ is an element of $\cR^{s}.$

\begin{lemma}
\label{chap1approxlemma}
Let $(\bar\xi,\yb,\yb(T))\in \P$ and $\mathpzc{r}\in \R.$ Then there exists a sequence
\\$\{(\xi^{\nu},y^{\nu},y^{\nu}(T))\}$ in $\P$ such that
\be
\label{chap1limitOmegaeq}
\Omega_{\P}\lam (\xi^{\nu},y^{\nu},\dot y^{\nu})\longrightarrow \mathpzc{p}_{\yb}\lam+\mathpzc{r}\lam.
\ee
\end{lemma}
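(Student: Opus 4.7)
The strategy is to approximate $\mathpzc{r}$ by appending to $\bar y$ high-frequency oscillations concentrated in shrinking time neighborhoods of the points $t^k$ appearing in the decomposition of $\mathpzc{r}$. Since $\R$ is a finite sum of subspaces of the form $R_{ij}(t^*)$, it suffices by linearity to treat the generating case $\mathpzc{r}\in R_{ij}(t^*)$ for a single triple $(i,j,t^*)$ with $i<j$ and $t^*\in\mr{int}\,I^i_+\cap I^j_+$; the general case follows by summing finitely many such perturbations, whose time-supports can be made pairwise disjoint for $\nu$ large enough (even when several points coincide, by placing each perturbation inside a distinct sub-interval of a shrinking window).

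In the generating case, let $\tilde y\in\Y$ realize $\mathpzc{r}$, i.e. satisfy \eqref{chap1condyr} with $V_{ij}\lam(t^*)\int_0^T(\tilde y_i\dot{\tilde y}_j-\tilde y_j\dot{\tilde y}_i)\dtt=\mathpzc{r}\lam$; assume $\tilde y$ is supported on $[0,1]$. Choose $\delta>0$ so that $[t^*-\delta,t^*+\delta]\subset\mr{int}\,I^i_+\cap I^j_+$ and define
\[
y^\nu(t):=\tilde y\!\left(\tfrac{\nu(t-t^*)}{\delta}+\tfrac12\right)\quad\text{for } t\in\bigl[t^*-\tfrac{\delta}{2\nu},\,t^*+\tfrac{\delta}{2\nu}\bigr],
\]
and $y^\nu\equiv 0$ elsewhere. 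A change of variables shows $\int_0^T(y^\nu_i\dot y^\nu_j-y^\nu_j\dot y^\nu_i)\dtt$ is independent of $\nu$, while $\|y^\nu\|_\infty=O(1)$, $\int|y^\nu||\dot y^\nu|\dtt=O(1)$, and $\|y^\nu\|_{L_p}=O(\nu^{-1/p})\to0$ for any finite $p$. Put $\tilde y^\nu:=\bar y+y^\nu$ and let $\tilde\xi^\nu$ solve \eqref{chap1xieq} with driving $\tilde y^\nu$; Gronwall gives $\|\tilde\xi^\nu-\bar\xi\|_\infty=O(\|y^\nu\|_{L_1})\to0$. Because $y^\nu_k\equiv 0$ for $k\neq i,j$ and the support of $y^\nu_i,y^\nu_j$ avoids $I^i_0\cup I^j_0$, the pair $\tilde y^\nu$ still satisfies $\dot{(\tilde y^\nu)}_k=0$ on $I^k_0$ for every $k$, together with $\tilde y^\nu(0)=0$ and $\tilde y^\nu(T)=\bar h$.

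Expanding $\Omega_{\P}\lam(\tilde\xi^\nu,\tilde y^\nu,\dot{\tilde y}^\nu)$: the terms in $g\lam$, $(Q\lam\xi,\xi)$, $(M\lam\xi,y)$ and $(R\lam y,y)$ converge to their values at $(\bar\xi,\bar y,\bar h)$ by dominated convergence (cross-terms containing $y^\nu$ are controlled by $\|y^\nu\|_{L_1}$ or $\|y^\nu\|_{L_2}^2$). The only delicate piece is
\[
\int_0^T(V\lam\tilde y^\nu,\dot{\tilde y}^\nu)\dtt=\int_0^T(V\lam\bar y,\dot{\bar y})\dtt+\int_0^T(V\lam\bar y,\dot y^\nu)\dtt+\int_0^T(V\lam y^\nu,\dot{\bar y})\dtt+\int_0^T(V\lam y^\nu,\dot y^\nu)\dtt.
\]
The second integral vanishes by integration by parts (boundary terms are zero since $y^\nu(0)=y^\nu(T)=0$, and the remaining integrand is $O(\|y^\nu\|_{L_1})$); the third is $O(\|y^\nu\|_{L_1})\to 0$. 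For the fourth, Lipschitz continuity of $V\lam$ and the support size $\delta/\nu$ give $\int_0^T(V\lam y^\nu,\dot y^\nu)\dtt=V\lam(t^*)\int_0^T(y^\nu_i\dot y^\nu_j-y^\nu_j\dot y^\nu_i)\dtt+O(1/\nu)$, which equals $\mathpzc{r}\lam+o(1)$. Summing yields $\Omega_{\P}\lam(\tilde\xi^\nu,\tilde y^\nu,\dot{\tilde y}^\nu)\to \mathpzc{p}_{\bar y}\lam+\mathpzc{r}\lam$.

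The main obstacle is that $(\tilde\xi^\nu,\tilde y^\nu,\bar h)$ need not belong to $\P$: the constraints $\dot y_i=0$ on $I^i_0$, $y(0)=0$, $h=y(T)$ and the inequality conditions of \eqref{chap1translinearcons} (when strict at $(\bar\xi,\bar y,\bar h)$) are preserved for large $\nu$, but the equality conditions $\eta'_j(\xh(T))(\tilde\xi^\nu(T)+B(T)\bar h)=0$, as well as any tight inequality conditions, generically fail because $\tilde\xi^\nu(T)\neq\bar\xi(T)$. This is fixed by adding a correction $\delta y^\nu$ to $\tilde y^\nu$, supported in a fixed sub-interval contained in $\bigcap_k\mr{int}\,I^k_+$ (where it automatically respects the Goh-cone constraint $\delta y^\nu_k$-constant on $I^k_0$) and chosen via the surjectivity of the associated finite-dimensional linear map from such corrections to the terminal constraint residuals; this surjectivity reduces to the non-degeneracy hypothesis on $\bar\eta'(\uh)$, which may be assumed (otherwise the host theorem is trivial by Lemma \ref{chap1notqualified}). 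Since $\|\delta y^\nu\|_\infty=O(\|\tilde\xi^\nu(T)-\bar\xi(T)\|)\to 0$ with fixed support, adding $\delta y^\nu$ perturbs $\Omega_{\P}\lam$ by $o(1)$, and setting $y^\nu:=\tilde y^\nu+\delta y^\nu$, $\xi^\nu$ the corresponding solution of \eqref{chap1xieq}, and $h^\nu:=y^\nu(T)$ produces the required sequence in $\P$ satisfying \eqref{chap1limitOmegaeq}.
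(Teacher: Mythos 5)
Your bump-scaling construction, and the term-by-term limit computation that recovers $\mathpzc{p}_{\yb}\lam+\mathpzc{r}\lam$, are essentially the ones used in the paper: the paper sets $\tilde y^{\nu}(t)=\tilde y(\nu(t-t^*))$ and obtains the same integration-by-parts cancellations and the same change-of-variables in the $V$-term. The genuine divergence is in how the perturbed direction is repaired so that it lands back in $\P$. The paper invokes Hoffman's error bound for finite systems of linear equalities and inequalities: since $(\zb,\vb)\in\C_{\infty}$ and the constraint violation of $(\breve z^{\nu},\breve v^{\nu})$ is $O(1/\nu)$, Hoffman's Lemma produces a point of $\C_{\infty}$ at $L_{\infty}$-distance $O(1/\nu)$, with no surjectivity or nondegeneracy assumption whatsoever. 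Your correction step — a $\delta y^{\nu}$ supported in a fixed subinterval of $\bigcap_k\mr{int}\,I_+^k$, constructed via surjectivity of the residual map — has two gaps. First, surjectivity of $\bar\eta'(\uh)$ on all of $\U$ does not imply surjectivity of that map restricted to directions supported in a fixed small subinterval; that is a controllability-type claim you neither state nor establish. Second, you cannot discharge the degenerate case by invoking Lemma \ref{chap1notqualified}: that lemma treats the degenerate case of Theorem \ref{chap1NC2}, whereas Lemma \ref{chap1approxlemma} is used inside Theorem \ref{chap1goh}, which carries no degeneracy hypothesis and must hold for an arbitrary convex compact $M$ (and the lemma itself is stated unconditionally). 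Replacing your explicit correction by the Hoffman argument closes both gaps at once, and also handles the possibly tight inequality endpoint constraints that you acknowledge but do not actually correct.
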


\begin{proof}
Take $(\bar\xi,\yb,\yb(T))\in \P,$ its corresponding critical direction $(\zb,\vb)\in\C$ related via \eqref{chap1yxi} and $\mathpzc p_{\yb}$ defined in \eqref{chap1plam}.
Assume that $\mathpzc{r}\in R_{ij}(t^*)$ for some  $1\leq i\mi j\leq m$ and $t^*\in\mr{int}\ I^i_+\cap I^j_+,$ i.e.   $\mathpzc{r}$
is associated via \eqref{chap1defr} to some function $\tilde{y}$  verifying \eqref{chap1condyr}.
Take $\tilde{y}(t)=0$ when $t\notin[0,T].$  Consider
\be
\label{chap1tildey}
\tilde{y}^{\nu}(t):=\tilde{y}(\nu(t-t^*)),\quad
\breve{y}^{\nu}:=\yb+\tilde{y}^{\nu}.
\ee
Let $\breve{\xi}^{\nu}$ be the solution of \eqref{chap1xieq} corresponding to $\breve{y}^{\nu}.$
Observe that for large enough ${\nu},$ as $t^*\in \mr{int}\ I^i_+\cap I^j_+,$
\be
\label{chap1brevey}
\dot{\breve{y}}^{\nu}_{k}=0,\ \mr{a.e.}\  \mr{on}\ I_0^k,\ \mr{for}\ k=1,\hdots,m.
\ee
Let $(\tilde z^{\nu},\tilde v^{\nu})$ and $(\breve z^{\nu},\breve v^{\nu})$ be the points associated by transformation \eqref{chap1yxi} with $(\tilde\xi^{\nu},\tilde y^{\nu},\tilde y^{\nu}(T))$ and $(\breve\xi^{\nu},\breve y^{\nu},\breve y^{\nu}(T)),$ respectively. By \eqref{chap1brevey}, we get
\benl
\label{chap1brevev}
\breve{v}^{\nu}_{k}=0,\ \mr{a.e.}\  \mr{on}\ I_0^k,\ \mr{for}\ k=1,\hdots,m.
\eenl
 Note, however, that $(\breve z^{\nu},\breve v^{\nu})$ can violate the terminal constraints defining $\C_{\infty},$ i.e. the constraints defined in \eqref{chap1linearcons}. Let us look for an estimate of the magnitude of this violation. Since
\be
\label{chap1tildey0}
\|\tilde y^{\nu}\|_{1}=\mathcal{O}(1/{\nu}),
\ee
and  $(\tilde \xi^{\nu},\tilde y^{\nu})$ is solution of \eqref{chap1xieq},  Gronwall's Lemma implies
\benl
\label{chap1tildexiqT}
|\tilde \xi^{\nu}(T)|=\mathcal{O}(1/{\nu}).
\eenl
On the other hand, notice that $\breve z^{\nu}(T)=\zb(T)+\tilde\xi^{\nu}(T),$ and thus
\benl
|\breve z^{\nu}(T)-\zb(T)|=\mathcal{O}(1/{\nu}).
\eenl
By Hoffman's Lemma (see \cite{Hof52}), there exists  $(\Delta z^{\nu},\Delta v^{\nu})\in\W$ satisfying
$ \|\Delta v^{\nu}\|_{\infty}+
\|\Delta z^{\nu}\|_{\infty}=\mathcal{O}(1/\nu),
$
and such that $(z^{\nu},v^{\nu}):=(\breve z^{\nu},\breve v^{\nu})+(\Delta z^{\nu},\Delta v^{\nu})$ belongs to $\C_{\infty}.$ Let $({\xi}^{\nu},{y}^{\nu},{y}^{\nu}(T))\in\P$ be defined by \eqref{chap1yxi}.
Let us show that for each $\lambda\in M,$
\benl
\lim_{{\nu}\rightarrow\infty}\Omega_{\P}\lam(\xi^{\nu},y^{\nu},\dot{y}^{\nu})= \mathpzc p_{\yb}\lam+\mathpzc r\lam.
\eenl
Observe that
\be\label{chap1OmegaPminusp}
\lim_{\nu\rightarrow\infty} \Omega_{\P}\lam(\xi^{\nu},y^{\nu},\dot{y}^{\nu})- \mathpzc p_{\yb}\lam=
\lim_{\nu\rightarrow\infty} \int_0^T
\{(V[\lambda]\yb,\dot{\tilde y}^{\nu})+(V[\lambda]\tilde y^{\nu},\dot{\tilde y}^{\nu})\}\dtt,
\ee
since the terms involving $\xi^{\nu}-\bar \xi,$ $y^{\nu}-\bar y$ or $\Delta v^{\nu}$
vanish as $\|\xi^{\nu}-\bar \xi\|_{\infty}\rightarrow 0$ and $\|y^{\nu}-\bar y\|_1\rightarrow 0.$
Integrating by parts the first term in the right hand-side of \eqref{chap1OmegaPminusp}. we obtain
\benl
\int_0^T (V[\lambda]\yb,\dot{\tilde y}^{\nu})\dtt
=[(V[\lambda]\yb,\tilde y^{\nu})]_0^T-\int_0^T \{(\dot{V}[\lambda]\yb,\tilde y^{\nu})+(V[\lambda]\dot{\yb},\tilde y^{\nu})\}\dtt\overset{\nu\rightarrow \infty}\rightarrow 0,
\eenl
by \eqref{chap1tildey0} and since $\tilde{y}^{\nu}(0)=\tilde{y}^{\nu}(T)=0.$
Coming back to \eqref{chap1OmegaPminusp} we have
\begin{align*}
\lim_{\nu\rightarrow\infty} \Omega_{\P}\lam&(\xi^{\nu},y^{\nu},\dot{y}^{\nu})- \mathpzc p_{\yb}\lam =\lim_{\nu\rightarrow\infty} \int_0^T (V[\lambda]\tilde y^{\nu},\dot{\tilde y}^{\nu})\dtt\\
&=\lim_{\nu\rightarrow\infty}\intT (V[\lambda](t)\tilde y(\nu(t-t^*)),\dot{\tilde y}(\nu(t-t^*)))\mr{d}\nu t\\
&=\lim_{\nu\rightarrow\infty}\int_{-\nu t^*}^{\nu(T-t^*)} (V[\lambda](t^*+s/\nu)\tilde y(s),\dot{\tilde y}(s))\mr{d}s
= \mathpzc r[\lambda],
\end{align*}
and thus \eqref{chap1limitOmegaeq} holds when $\mathpzc{r}\in R_{ij}(t^*).$

Consider the general case when $ \mathpzc{r}\in \R,$ i.e.
$\mathpzc{r}=\ds\sum_{i\mi j}\sum_{k=1}^{N_{ij}}\mathpzc{r}_{ij}^k,$  with each $\mathpzc{r}_{ij}^k$ in $R_{ij}(t_{ij}^k).$
Let $\tilde{y}_{ij}^k$ be associated with  $\mathpzc{r}_{ij}^k$ by \eqref{chap1defr}. Define $\tilde{y}^{k,\nu}_{ij}$ as  in \eqref{chap1tildey}, and follow the previous procedure for $\yb+ \ds\sum_{i\mi j}\sum_{k=1}^{N_{ij}} \tilde{y}^{k,\nu}_{ij}$  to get the desired result.

\end{proof}

\if{
\begin{definition}
Given $M\subset \cR^s$ a convex compact set, a pair $1\leq i\neq j\leq m,$ and an interval $(a,b)\subset I^{i}_+\cap I^j_+,$ define
\be
G_{(a,b)}^{ij}(M):=\{\lambda\in M:V_{ij}\lam \equiv 0\ \mr{on}\ (a,b)\}.
\ee
\end{definition}

\begin{lemma} Let $M\subset \cR^s$ be convex and compact, $1\leq i\neq j\leq m$ a pair of indexes and $(a,b)\subset I^{i}_+\cap I^j_+.$
Assume that \eqref{chap1maxM} holds.
Then
\be\label{chap1maxGij}
\max_{\lambda\in G_{(a,b)}^{ij}(M)}\Omega_{\P}\lam(\xi,y,\dot y)\geq0,\quad \mr{for}\ \mr{all}\ \ (\xi,y,y(T))\in \P.
\ee
\end{lemma}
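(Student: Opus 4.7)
The strategy is to combine Lemma~\ref{chap1approxlemma} with Sion's minimax theorem. Fix $(\bar\xi,\yb,\yb(T))\in\P$ and set $\mathpzc{p}_{\yb}\lam:=\Omega_{\P}\lam(\bar\xi,\yb,\dot{\yb})$. Since the matrices $Q,M,R,V$ and the terminal form $g$ appearing in \eqref{chap1OmegaG} are linear in $\lambda\in\cR^{s,*}$, both $\Omega_{\P}\lam$ and $\mathpzc{p}_{\yb}\lam$ are affine functions of $\lambda$. Write $V_{ij}\lam(t)=\langle\lambda,\phi(t)\rangle$ for a continuous $\phi:[0,T]\to\cR^s$; by the explicit form of the mapping \eqref{chap1defr}, every element of $R_{ij}(t^*)$ is a scalar multiple of $\phi(t^*)$. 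Introduce the linear subspace
\benl
\mathcal{R}_0:=\mathrm{span}\{\phi(t):t\in(a,b)\}\subset\R,
\eenl
so that $G_{(a,b)}^{ij}(M)=M\cap\mathcal{R}_0^{\perp}$.

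For every $\mathpzc{r}\in\mathcal{R}_0$, decompose $\mathpzc{r}=\sum_{k}\mathpzc{r}_{ij}^{k}$ with $\mathpzc{r}_{ij}^{k}\in R_{ij}(t^{k})$ and $t^{k}\in(a,b)\subset\mr{int}\,I_{+}^{i}\cap I_{+}^{j}$. Lemma~\ref{chap1approxlemma} then provides a sequence $(\xi^{\nu},y^{\nu},y^{\nu}(T))\in\P$ such that $\Omega_{\P}\lam(\xi^{\nu},y^{\nu},\dot y^{\nu})\to\mathpzc{p}_{\yb}\lam+\mathpzc{r}\lam$ for each $\lambda\in M$. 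As both sides are affine in $\lambda$ and $M$ is a compact subset of $\cR^{s}$ (Remark~\ref{chap1Lambdacompact}), this convergence is uniform on $M$. Applying hypothesis \eqref{chap1maxM} to $(\xi^{\nu},y^{\nu},y^{\nu}(T))$ and passing to the limit yields
\benl
\max_{\lambda\in M}\bigl(\mathpzc{p}_{\yb}\lam+\mathpzc{r}\lam\bigr)\ge 0,\qquad\forall\,\mathpzc{r}\in\mathcal{R}_0,
\eenl
equivalently $\inf_{\mathpzc{r}\in\mathcal{R}_0}\sup_{\lambda\in M}(\mathpzc{p}_{\yb}\lam+\mathpzc{r}\lam)\ge 0$.

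Apply Sion's minimax theorem to the bilinear (hence continuous, quasi-concave in $\lambda$ and quasi-convex in $\mathpzc{r}$) function $(\lambda,\mathpzc{r})\mapsto\mathpzc{p}_{\yb}\lam+\mathpzc{r}\lam$ on the compact convex set $M$ and the convex set $\mathcal{R}_0$:
\benl
\sup_{\lambda\in M}\inf_{\mathpzc{r}\in\mathcal{R}_0}\bigl(\mathpzc{p}_{\yb}\lam+\mathpzc{r}\lam\bigr)=\inf_{\mathpzc{r}\in\mathcal{R}_0}\sup_{\lambda\in M}\bigl(\mathpzc{p}_{\yb}\lam+\mathpzc{r}\lam\bigr)\ge 0.
\eenl
Since $\mathcal{R}_0$ is a linear subspace, the inner infimum equals $0$ when $\lambda\in\mathcal{R}_0^{\perp}$ and $-\infty$ otherwise; hence the left-hand side reduces to $\sup_{\lambda\in G_{(a,b)}^{ij}(M)}\mathpzc{p}_{\yb}\lam$. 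This supremum is a maximum, as $G_{(a,b)}^{ij}(M)$ is closed in the compact set $M$ and $\mathpzc{p}_{\yb}$ is continuous in $\lambda$, which yields \eqref{chap1maxGij}. The main subtlety is the minimax step: the subspace $\mathcal{R}_0$ is unbounded, but the $\{-\infty\}$ value off $\mathcal{R}_0^{\perp}$ is precisely what channels the outer supremum onto $G_{(a,b)}^{ij}(M)$ and produces a multiplier of the desired type.
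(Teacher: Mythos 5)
Your proof is correct and follows essentially the same route the paper takes for the general version (Theorem \ref{chap1goh}): approximate $\mathpzc{p}_{\yb}+\mathpzc{r}$ by values of $\Omega_{\P}$ on $\P$ via Lemma \ref{chap1approxlemma}, pass to $\inf_{\mathpzc{r}}\max_{\lambda}\ge 0$, invert with a minimax theorem, and then use that the linear subspace of perturbations forces the maximizer into $\mathcal{R}_0^{\perp}\cap M = G^{ij}_{(a,b)}(M)$. The only cosmetic departures are that you specialize the subspace $\R$ to $\mathcal{R}_0=\mathrm{span}\{\phi(t):t\in(a,b)\}$ for the single pair $(i,j)$ and the single interval, and you invoke Sion's minimax theorem in place of Rockafellar's Corollary 37.3.2, both of which are equally valid here since the pairing is bilinear and $M$ is compact convex.
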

}\fi

\begin{proof}\textbf{[of Theorem \ref{chap1goh}]}
Take $(\bar\xi,\yb,\yb(T))\in\P$ and $r\in\R.$ By Lemma \ref{chap1approxlemma} there exists a sequence $\{(\xi^{\nu},y^{\nu},y^{\nu}(T))\}$ in $\P$ such that for each $\lambda\in M,$
\benl
\label{chap1limOmeganu}
\Omega_{\P}\lam(\xi^{\nu},y^{\nu},\dot{y}^{\nu})
\rightarrow
\Omega_{\P}\lam(\bar\xi,\yb,\dot\yb)+r\lam.
\eenl
Since this convergence is uniform over $M,$ from \eqref{chap1maxM} we get that
\benl
\label{chap1maxomegar}
\max_{\lambda\in M}(\Omega_{\P}\lam(\bar\xi,\yb,\dot{\yb})+r\lam)\geq 0,\quad \mr{for}\ \mr{all}\ \mathpzc{r}\in\R.
\eenl
Hence
\be
\label{chap1infmax}
\inf_{r\in\R}\max_{\lambda\in M}(\Omega_{\P}\lam(\bar\xi,\yb,\dot{\yb})+r\lam)\geq 0,
\ee
where the expression in brackets is linear both in $\lambda$ and $r.$ Furthermore, note that $M$ and $\R$ are convex, and $M$ is compact. In light of MinMax Theorem \cite[Corollary 37.3.2, page 39]{Roc70} we can invert the order of $\inf$ and $\max$ in \eqref{chap1infmax} and obtain
\be
\label{chap1maxinf}
\max_{\lambda\in M}\inf_{r\in\R}(\Omega_{\P}\lam(\bar\xi,\yb,\dot{\yb})+r\lam)\geq 0.
\ee
Suppose that, for certain $\lambda\in M,$ there exists $r\in\R$ with $r\lam\neq 0.$ Then the infimum in \eqref{chap1maxinf} is  $-\infty$ since $\R$ is a linear subspace.
Hence, this $\lambda$ does not provide the maximal value of the infima, and so, we can restrict the maximization to the set of $\lambda\in M$ for which $r\lam=0$ for every $r\in \R.$ Note that this set is $G(M),$ and thus the conclusion follows.
\end{proof}

\if{
Observe that Assumption 2 implies that
$I^{i}_+\cap I^{j}_+$ can be expressed as a finite union of intervals. We will say that $(a,b)\subset I^{i}_+\cap I^{j}_+$ is a \textit{maximal composing interval} if there does not exist $(c,d)\subset I^{i}_+\cap I^{j}_+$ containing $(a,b)$ with strict inclusion.

\begin{proof}
Consider two arbitrary pairs $i\neq j,$ and $i'\neq j'.$
Let $(a,b)$ and $(a',b')$ be maximal composing intervals of $I^{i}_+\cap I^{j}_+$ and $I^{i'}_+\cap I^{j'}_+,$ respectively.
Observe that with the same arguments used in the proof of previous Lemma we can show that
\be\label{chap1maxinter}
\max_{\lambda\in G_{(a,b)}^{ij}(M)\cap G_{(a',b')}^{i'j'}(M) }\Omega_{\P}\lam(\bar\xi,\yb,\dot{\yb})\geq 0.
\ee
Furthermore, we can iteratively restrinct the maximum in \eqref{chap1maxinter} to a smaller subset of multipliers obtained intersecting the current subset with $G_{(a'',b'')}^{i''j''}(M).$ In a finite number of iterations,
the desired result is achieved.
\end{proof}
}\fi

Consider for $i,j=1,\hdots,m:$
\benl
\label{chap1Iij}
I_{ij}:=\{t\in(0,T):\uh_i(t)=0,\ \uh_j(t)>0\}.
\eenl
By Assumption 2, $I_{ij}$ can be expressed as a finite union of intervals, i.e.
\benl
\label{chap1Iijk}
I_{ij}=\bigcup_{k=1}^{K_{ij}} I_{ij}^k,\ \mr{where} \
I_{ij}^k:=(c_{ij}^k,d_{ij}^k).
\eenl
Let $(z,v)\in \C_{\infty},$ $i\neq j,$ and $y$ be defined by \eqref{chap1yxi}. Notice that $y_i$ is constant on each $(c_{ij}^k,d_{ij}^k).$ Denote with $y_{i,j}^k$ its value on this interval.

\begin{proposition}
\label{chap1cnvector}
Let $(z,v)\in \C_{\infty},$ $y$ be defined by \eqref{chap1yxi}
and $\lambda\in G(\Lambda).$ Then
\benl
\label{chap1expansionV}
\int_0^T (V[\lambda]y,v)\dtt=
\sum_{\substack{i\neq j\\i,j=1}}^{m}
\sum_{k=1}^{K_{ij}}y^{k}_{i,j}
\left\{
[V_{ij}[\lambda]y_j]_{c^{k}_{ij}}^{d^{k}_{ij}}-\int_{c^{k}_{ij}}^{d^{k}_{ij}}\dot{V}_{ij}[\lambda]y_j\dtt
\right\}.
\eenl
\end{proposition}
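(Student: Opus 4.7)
The plan is to exploit three pointwise vanishing properties to localize the integrand $(V[\lambda]y,v)$ and then integrate by parts on what remains. First, writing the bilinear form componentwise and using the skew-symmetry $V_{ii}[\lambda]\equiv 0$, we have $(V[\lambda]y,v)=\sum_{i\neq j}V_{ij}[\lambda]\,y_j\,v_i$. For each pair $i\neq j$, two localizations apply: $\lambda\in G(\Lambda)$ gives $V_{ij}[\lambda]\equiv 0$ on $I^i_+\cap I^j_+$; and because $(z,v)\in\C_{\infty}$, strict complementarity (Assumption~1) combined with the first order condition yields $v_i=0$ a.e.\ on $I^i_0$ (equivalently, $\dot y_i=0$ on $I^i_0$, exactly as built into the definition of $\P$). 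Together, these cut the support of $V_{ij}[\lambda]\,y_j\,v_i$ down to $I^i_+\cap I^j_0$, which by Assumption~2 is the finite union $\bigcup_k I_{ji}^k$ with $I_{ji}^k=(c_{ji}^k,d_{ji}^k)$.

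On each component $I_{ji}^k\subset I^j_0$, the Goh condition $\dot y_j=0$ on $I^j_0$ forces $y_j$ to take the constant value $y_{j,i}^k$. Pulling this constant out and substituting $v_i=\dot y_i$ gives
\[
\int_{c_{ji}^k}^{d_{ji}^k}V_{ij}[\lambda]\,y_j\,v_i\,\dtt=y_{j,i}^k\int_{c_{ji}^k}^{d_{ji}^k}V_{ij}[\lambda]\,\dot y_i\,\dtt.
\]
Since $V[\lambda]$ is Lipschitz in $t$ (as recorded after \eqref{chap1SV}) and $y\in W^1_\infty$, integration by parts is justified and yields the expression $[V_{ij}[\lambda]\,y_i]_{c_{ji}^k}^{d_{ji}^k}-\int_{c_{ji}^k}^{d_{ji}^k}\dot V_{ij}[\lambda]\,y_i\,\dtt$.

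Summing over $i\neq j$ and over $k$, then relabeling the dummy indices $i\leftrightarrow j$ and invoking the skew-symmetry $V_{ji}[\lambda]=-V_{ij}[\lambda]$, the constant factor becomes $y_{i,j}^k$, the endpoints become $c_{ij}^k,d_{ij}^k$, and the factor inside the bracket and under the integral becomes $V_{ij}[\lambda]\,y_j$, matching the right-hand side of the statement. The main obstacle is purely bookkeeping: one has to keep track of which interval actually survives after the two vanishing properties are applied (note that the integration is naturally over $I_{ji}^k$, not $I_{ij}^k$, before the final relabeling), and make the index swap consistently on the constant, on the endpoints, and on the factor subject to integration by parts.
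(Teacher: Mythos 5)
Your argument has the same skeleton as the paper's proof: componentwise expansion of $(V[\lambda]y,v)$, localization using the vanishing of one $v$-component on its active set together with the vanishing of $V_{ij}[\lambda]$ on $I^i_+\cap I^j_+$, pulling out the constant value of the relevant $y$-component on the surviving intervals, and integration by parts against the Lipschitz function $V_{ij}[\lambda]$. These are exactly the right ingredients, and the localization to $I^i_+\cap I^j_0=I_{ji}$ is reasoned correctly.

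The one genuine slip is in the final relabeling step. Starting from $(V[\lambda]y,v)=\sum_{i\neq j}V_{ij}[\lambda]\,y_j\,v_i$ and localizing the $(i,j)$ summand to $I_{ji}$, you correctly arrive at
\[
\sum_{i\neq j}\sum_{k}y_{j,i}^{k}\Bigl\{\bigl[V_{ij}[\lambda]\,y_i\bigr]_{c_{ji}^k}^{d_{ji}^k}-\int_{c_{ji}^k}^{d_{ji}^k}\dot V_{ij}[\lambda]\,y_i\,\dtt\Bigr\}.
\]
Relabeling $i\leftrightarrow j$ turns $V_{ij}$ into $V_{ji}$, and the skew-symmetry $V_{ji}[\lambda]=-V_{ij}[\lambda]$ then produces an overall minus sign, so this step yields \emph{minus} the right-hand side of the proposition, not what you assert: the factor does not ``become $V_{ij}[\lambda]y_j$'' but $-V_{ij}[\lambda]y_j$. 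The paper avoids the relabeling (and this sign) entirely by taking the initial expansion to be $\int_0^T(V[\lambda]y,v)\dtt=\sum_{i\neq j}\int_0^T V_{ij}[\lambda]\,y_i\,v_j\,\dtt$, which localizes the $(i,j)$ summand directly to $I_{ij}=I^i_0\cap I^j_+$ where $y_i$ is constant, and gives the stated formula with no further manipulation. Note that these two componentwise expansions of $(V[\lambda]y,v)$ differ by exactly one sign via the same skew-symmetry, so your choice of starting expansion is what forces the relabeling detour. Either adopt the paper's expansion from the outset, or carry the sign through and reconcile it explicitly with the stated identity; as written, a minus sign has been silently dropped.
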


\begin{proof}
Observe that
\be
\label{chap1sumVij}
\int_0^T (V\lam y,v)\dtt=
\sum_{\substack{i\neq j\\i,j=1}}^m\int_0^TV_{ij}\lam y_iv_j\dtt,
\ee
since $V_{ii}\lam\equiv0.$ Fix $i\neq j,$ and recall that that $V_{ij}\lam$ is differentiable in time (see expression \eqref{chap1Vind}). Since $(z,v)\in \C_{\infty}$ and $\lambda\in G(\Lambda),$
\be
\label{chap1sumk}
\begin{split}
 \int_0^T V_{ij}\lam y_iv_j\dtt
&=\int_{I_{ij}}V_{ij}\lam y_iv_j\dtt
=\sum_{k=1}^{K_{ij}} \int_{c_{ij}^k}^{d_{ij}^k}
V_{ij}\lam y_iv_j\dtt\\
&= \sum_{k=1}^{K_{ij}} y_{i,j}^k
\left\{ [V_{ij}\lam y_j]_{c_{ij}^k}^{d_{ij}^k}
-\int_{c_{ij}^k}^{d_{ij}^k} \dot{V}_{ij}\lam y_j\dtt\right\},
\end{split}
\ee
where the last equality was obtained by integrating by parts and knowing that $y_i$ is constant on $I_{ij}.$ The desired result follows from \eqref{chap1sumVij} and \eqref{chap1sumk}.
\end{proof}

Given a real function $h$ and $c\in \cR,$ define
\benl
h(c+):=\lim_{t\rightarrow c+} h(t),\ \mr{and}\
h(c-):=\lim_{t\rightarrow c-} h(t).
\eenl
\begin{definition} Let $(\xi,y,h)\in\P_2$ and $\lambda\in G(\Lambda).$ Define
\benl
\begin{split}
&\Xi\lam(\xi,y,h):=\\
&2\sum_{\substack{i\neq j\\i,j=1}}^{m}
\sum_{\substack{k=1\\c^{k}_{ij}\neq 0}}^{K_{ij}}
y^{k}_{i,j}
\left\{
V_{ij}\lam(d^{k}_{ij})y_j(d^{k}_{ij}+)
-V_{ij}\lam (c^{k}_{ij}) y_j(c^{k}_{ij}-)
-\int_{c^{k}_{ij}}^{d^{k}_{ij}}\dot{V}_{ij}\lam
y_j\dtt\right\},
\end{split}
\eenl
where the above expression is interpreted as follows:
\begin{itemize}
\item[(i)] $y_j(d^{k}_{ij}+):=h_j,$ if $d^{k}_{ij}=T,$
\item[(ii)]$V_{ij}\lam (c^{k}_{ij})y_j(c^{k}_{ij}-):=0,$ if $\uh_i\gr 0$ and $\uh_j\gr 0$ for $t<c^{k}_{ij},$
\item[(iii)]$V_{ij}\lam(d^{k}_{ij})y_j(d^{k}_{ij}+):=0,$ if $\uh_i\gr 0$ and
$\uh_j\gr 0$ for $t>d^{k}_{ij}.$
\end{itemize}
\end{definition}
\begin{proposition}
\label{chap1Vquad}
The following properties for $\Xi$ hold.
\begin{itemize}
\item [(i)]$\Xi\lam(\xi,y,h)$ is well-defined for each
$(\xi,y,h)\in \P_2,$ and $\lambda\in G(\Lambda).$
\item [(ii)] If $\{(\xi^{\nu},y^{\nu},y^{\nu}(T))\}\subset \P$
converges in the
$\X_2\times\Uspace_2\times
\cR^m-$ topology to $(\xi,y,h)\in \P_2$ , then
\benl
\int_0^T  (V\lam y^{\nu},\dot{y}^{\nu})\dtt
\overset{\nu\rightarrow \infty}\longrightarrow
\Xi\lam(\xi,y,h).
\eenl
\end{itemize}
\end{proposition}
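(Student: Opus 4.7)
The plan is to verify (i) by a case analysis at each switching instant and to deduce (ii) from Proposition \ref{chap1cnvector} applied along the approximating sequence, passing to the limit term by term.

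For part (i) I would argue that every building block of $\Xi\lam$ is unambiguous. Fix $i\neq j$ and $k$. First, $y_{i,j}^k$ is well-defined because $I_{ij}^k\subset I_0^i$, so property \eqref{chap1yconstant} makes $y_i$ a.e.~constant on this connected component of $I_0^i$. The integral $\int_{c_{ij}^k}^{d_{ij}^k}\dot V_{ij}\lam y_j\,\dtt$ makes sense because $V_{ij}\lam$ is Lipschitz (see \eqref{chap1Vind}) and $y_j\in L_2$. For the pointwise boundary values, I would split into cases at $c_{ij}^k$ (the analysis at $d_{ij}^k$ being symmetric) using Assumption 2 and the maximality of $I_{ij}^k$: either $\uh_j$ vanishes on a left neighbourhood of $c_{ij}^k$, in which case $y_j$ is a.e.~constant there by \eqref{chap1yconstant} and the one-sided limit $y_j(c_{ij}^k-)$ equals that constant, or $\uh_i,\uh_j>0$ on a left neighbourhood, which falls under convention (ii). In the latter case, consistency is ensured by $\lambda\in G(\Lambda)$: $V_{ij}\lam\equiv 0$ on $I^i_+\cap I^j_+$, and continuity of $V_{ij}\lam$ from \eqref{chap1Vind} forces $V_{ij}\lam(c_{ij}^k)=0$, so the convention does not destroy information. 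The same argument at $d_{ij}^k$ handles conventions (i) and (iii).

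For part (ii), each $(\xi^{\nu},y^{\nu},y^{\nu}(T))\in\P$ corresponds via \eqref{chap1yxi} to some $(z^{\nu},v^{\nu})\in\C_{\infty}$ with $v^{\nu}=\dot y^{\nu}$, so Proposition \ref{chap1cnvector} gives
\[
\int_0^T(V\lam y^{\nu},\dot y^{\nu})\,\dtt=\sum_{i\neq j}\sum_{k=1}^{K_{ij}}y^{k,\nu}_{i,j}\Big\{[V_{ij}\lam y^{\nu}_j]_{c_{ij}^k}^{d_{ij}^k}-\int_{c_{ij}^k}^{d_{ij}^k}\dot V_{ij}\lam y^{\nu}_j\,\dtt\Big\}.
\]
The sum is finite, so it suffices to pass to the limit in each summand. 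Since $y^{\nu}\to y$ in $L_2$ and both $y_i^{\nu},y_i$ are constant on $I_{ij}^k$, averaging over this fixed interval gives $y^{k,\nu}_{i,j}\to y^k_{i,j}$; boundedness of $\dot V_{ij}\lam$ together with $L_2$ convergence of $y_j^{\nu}$ yields the convergence of the integral terms.

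The main obstacle is the passage to the limit in the boundary values $V_{ij}\lam(c_{ij}^k)\,y^{\nu}_j(c_{ij}^k)$ and $V_{ij}\lam(d_{ij}^k)\,y^{\nu}_j(d_{ij}^k)$, since $L_2$ convergence does not deliver pointwise convergence. I would repeat the case split from (i). If $\uh_j=0$ on a one-sided neighbourhood of the switching point, then $y_j^{\nu}$ is constant on that fixed neighbourhood for every $\nu$, so its boundary value equals an average over a fixed subinterval; this average converges in $L_2$ to the corresponding average of $y_j$, which is precisely $y_j(c_{ij}^k-)$ or $y_j(d_{ij}^k+)$. When $d_{ij}^k=T$ the relevant boundary value is simply $y_j^{\nu}(T)\to h_j$, matching convention (i). In the remaining case where $\uh_i,\uh_j>0$ on one side of the switching instant, the $G(\Lambda)$ hypothesis and continuity of $V_{ij}\lam$ force $V_{ij}\lam$ to vanish at the switching time, so the entire product tends to $0$ uniformly in $\nu$, matching conventions (ii) and (iii). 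Collecting the limits recovers exactly $\Xi\lam(\xi,y,h)$ and the second assertion follows. Note that the boundary terms at endpoints $c_{ij}^k=0$ are excluded from $\Xi\lam$, which is consistent with $y^{\nu}(0)=0$ forcing the corresponding contribution to vanish in the approximating formula.
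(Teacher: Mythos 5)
Your argument is correct and follows essentially the same route as the paper: part (i) via the same two-way case split at each switching instant (either $\uh_j$ vanishes near the endpoint, so $y_j$ is locally constant, or both controls are positive there and $\lambda\in G(\Lambda)$ plus continuity of $V_{ij}\lam$ forces $V_{ij}\lam$ to vanish at the endpoint), and part (ii) by starting from the expansion of Proposition \ref{chap1cnvector}, deducing pointwise convergence of the boundary values from constancy on fixed intervals together with $L_2$-convergence, and invoking the same case split to handle the endpoints where pointwise convergence is unavailable. The paper's proof is terser but makes the identical observations; you have merely made explicit the averaging argument behind the pointwise convergence and the role of Proposition \ref{chap1cnvector}, which the paper leaves implicit.
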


\begin{proof}
\textbf{(i)}
Take $(\xi,y,h)\in \P_2.$ First observe that $y_i\equiv y_{i,j}^k$ over $(c^{k}_{ij},d^{k}_{ij}).$
As $c^{k}_{ij}\neq 0,$ two possible situations can arise,
\begin{itemize}
 \item [(a)] for $t<c^{k}_{ij}:$ $\uh_j=0,$ thus $y_j$ is
constant, and consequently $y_j(c^{k}_{ij}-)$ is
well-defined,
\item [(b)] for $t<c^{k}_{ij}: $ $\uh_i>0$ and $\uh_j>0,$ thus
$V_{ij}\lam(c^{k}_{ij})=0$ since $\lambda\in G(\Lambda).$
\end{itemize}
The same analysis can be done for $t>d^{k}_{ij}$ when
$d^{k}_{ij}\neq T.$ We conclude that $\Xi$ is correctly defined.

\noindent \textbf{(ii)} Observe that since $y^{\nu}$ converges to $y$ in
the $\Uspace_2-$topology and since $y_i^{\nu}$ is constant over
$I_{ij},$ then $y_i$ is constant as well, and $y_i^{\nu}$
goes to $y_i$ pointwise on $I_{ij}.$ Thus, $
y_i^{\nu}(c^{k}_{ij})\longrightarrow y^{k}_{i,j},
$ and
$
y_i^{\nu}(d^{k}_{ij})\longrightarrow y^{k}_{i,j}.$
Now, for the terms on $y_j,$ the same analysis can be made, which yields either
$y_j^{\nu}(c^{k}_{ij})\longrightarrow y_j(c^{k}_{ij}-)$ or
$V_{ij}\lam (c^{k}_{ij})=0;$ and, either
$y_j^{\nu}(d^{k}_{ij})\longrightarrow y_j(d^{k}_{ij}+)$ or
$V_{ij}\lam (d^{k}_{ij})=0,$ when $d^{k}_{ij}<T.$ For
$d^{k}_{ij}=T,$  $y_j^{\nu}(T)\longrightarrow h_j$ holds.
\end{proof}

\begin{definition} For $(\xi,y,h)\in \P_2$ and $\lambda\in G(\Lambda)$ define
\benl
\begin{split}
\Omega_{\P_2}\lam(\xi,y,h):=&g\lam (\xi(T),h) +\Xi\lam(\xi,y,h)\\
&+\intT ((Q\lam\xi,\xi)+2(M\lam \xi,y)+(R\lam y,y))\dtt.
\end{split}
\eenl
\end{definition}
\begin{remark}\label{chap1OmegasRemark}
 Observe that when $m=1,$ the mapping $\Xi\equiv0$ since $V\equiv 0.$ Thus, in this case, $\Omega_{\P_2}$ can be defined for any element $(\xi,y,h)\in \X_2\times\U_2\times \cR$ and any $\lambda\in\Lambda.$ If we take $(z,v)\in \W$ satisfying \eqref{chap1lineareq}, and define $(\xi,y)$ by \eqref{chap1yxi}, then
\benl
\Omega\lam(z,v)=\Omega_{\P}\lam(\xi,y,\dot y)
=\Omega_{\P_2}\lam(\xi,y,y(T)).
\eenl
For $m\gr 1,$ the previous equality holds for $(z,v)\in\C_{\infty}.$
\end{remark}

\begin{lemma}\label{chap1limitOmega} Let  $\{(\xi^{\nu},y^{\nu},y^{\nu}(T)\}\subset \P$ be a sequence converging to  $(\xi,y,h)\in \P_2$ in the $\X_2\times \U_2\times \cR^m-$topology. Then
\benl
\lim_{\nu\rightarrow \infty}\Omega_{\P}\lam(\xi^{\nu},y^{\nu},\dot{y}^{\nu})
=\Omega_{\P_2}\lam(\xi,y,h).
\eenl
\end{lemma}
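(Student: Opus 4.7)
The plan is to split $\Omega_{\P}[\lambda](\xi^{\nu}, y^{\nu}, \dot y^{\nu})$ into three groups of summands and check separately that each converges to the corresponding group in $\Omega_{\P_2}[\lambda](\xi, y, h)$. All the genuinely delicate work has already been done in Proposition \ref{chap1Vquad}(ii); what remains are routine continuity assemblies.

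First I would handle the endpoint piece $g[\lambda](\xi^{\nu}(T), y^{\nu}(T))$. Convergence in $\X_2 = W^1_2(0,T;\cR^n)$ implies uniform convergence by the one-dimensional Sobolev embedding $W^{1,2}(0,T) \hookrightarrow C([0,T])$; in particular $\xi^{\nu}(T) \to \xi(T)$. Combined with the hypothesis $y^{\nu}(T) \to h$ and the fact that $g[\lambda]$ is a quadratic form in $(\zeta, h)$ with coefficients evaluated at the fixed time $t = T$, this yields $g[\lambda](\xi^{\nu}(T), y^{\nu}(T)) \to g[\lambda](\xi(T), h)$.

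Next I treat the $L_2$-continuous part, namely the integrals of $(Q[\lambda]\xi^{\nu}, \xi^{\nu})$, $(M[\lambda]\xi^{\nu}, y^{\nu})$ and $(R[\lambda] y^{\nu}, y^{\nu})$. The matrices $Q[\lambda]$, $M[\lambda]$, $R[\lambda]$ belong to $L_{\infty}(0,T)$: they are built from the essentially bounded control $\uh$, the Lipschitz costate $\psi$, and the Lipschitz evaluations of $f_i$, $f_i'$, $f_i''$ along the reference trajectory $\wh$ (here one uses that $B$ and $C[\lambda]$ are Lipschitz, so the derivatives $\dot B$, $\dot C[\lambda]$ are essentially bounded). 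Combining uniform boundedness of the coefficients with the $L_2$ convergences $\xi^{\nu} \to \xi$ and $y^{\nu} \to y$ (the former even uniform by Step~1), each of the three bilinear integrals converges to its analogue in $(\xi, y)$ by the Cauchy-Schwarz inequality.

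The only summand that genuinely involves the derivative $\dot y^{\nu}$ is $\int_0^T (V[\lambda] y^{\nu}, \dot y^{\nu})\dtt$, and this is precisely the content of Proposition \ref{chap1Vquad}(ii): it tends to $\Xi[\lambda](\xi, y, h)$. Assembling the three limits and matching with the definition of $\Omega_{\P_2}$ yields the claim. The main obstacle --- namely, that $\dot y^{\nu}$ need not converge in any reasonable sense under the $\X_2 \times \U_2 \times \cR^m$ topology --- has been resolved ahead of time by the integration-by-parts computation underlying Proposition \ref{chap1Vquad}(ii), so the present lemma reduces to a continuity bookkeeping.
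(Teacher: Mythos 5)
Your proof is correct and is exactly the argument the paper intends: Lemma \ref{chap1limitOmega} is stated without proof, placed immediately after Proposition \ref{chap1Vquad}(ii), and your three-part decomposition (endpoint term via the Sobolev embedding $W^{1,2}(0,T)\hookrightarrow C([0,T])$, the bilinear integrals via $L_{\infty}$-boundedness of $Q[\lambda]$, $M[\lambda]$, $R[\lambda]$ and Cauchy--Schwarz, and the $V$-term via Proposition \ref{chap1Vquad}(ii)) fills in precisely the routine continuity bookkeeping the authors leave to the reader. As a side remark, $\xi^{\nu}(T)\to\xi(T)$ can alternatively be seen by applying Gronwall's Lemma to the linear equation \eqref{chap1xieq} satisfied by $\xi^{\nu}-\xi$ with source $B_1(y^{\nu}-y)$.
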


\if{
\begin{definition} Consider the subset of Lagrange multipliers
 \be
\Lambda_1:=\{\lambda\in\Lambda:\ \sum_{i=0}^{d_{\varphi}} \alpha_i=1\}.
\ee
\end{definition}

\begin{lemma}
 If \eqref{chap1cq} holds, then $\Lambda_1$
is compact and convex.
\end{lemma}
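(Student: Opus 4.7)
The plan is to treat convexity and compactness of $\Lambda_1$ separately, since they rely on different ingredients.

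\emph{Convexity.} I would observe that all of the conditions cutting out $\Lambda_1$ are linear or convex in $\lambda=(\alpha,\beta,\psi)$. The costate equation \eqref{chap1costateeq} and the transversality condition are linear; the sign constraint $\alpha\geq 0$ and the affine normalization $\sum_{i=0}^{d_\varphi}\alpha_i=1$ are convex/affine; and by strict complementarity (Assumption 1) together with Remark~\ref{chap1Hucont}, the minimum condition \eqref{chap1maxcond} is equivalent to the family of linear constraints $H_{u_i}[\lambda](t)\geq 0$ on $[0,T]$ together with $H_{u_i}[\lambda](t)=0$ on $I^i_+$, for $i=1,\ldots,m$. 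Hence $\Lambda_1$ is the intersection of a convex cone with an affine hyperplane, and is therefore convex.

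\emph{Compactness.} By Remark~\ref{chap1Lambdacompact} it suffices to show that the projection of $\Lambda_1$ onto the finite-dimensional parameter space $(\alpha,\beta)\in\cR^s$ is closed and bounded, since $\psi$ depends linearly on $(\alpha,\beta)$ via the adjoint system. Closedness is immediate because each defining constraint passes to the limit. The simplex condition $\alpha\geq 0$, $\sum_i\alpha_i=1$ bounds $|\alpha|$. The essential step is the boundedness of $|\beta|$, for which the nondegeneracy assumption \eqref{chap1cq} enters. I would argue by contradiction: if $\lambda_n\in\Lambda_1$ had $|\beta_n|\to\infty$, then rescaling by $|\beta_n|^{-1}$ and extracting a subsequence one obtains a nonzero homogeneous-PMP limit $\tilde\lambda=(0,\tilde\beta,\tilde\psi)$ with $|\tilde\beta|=1$. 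Integration by parts, along the lines of the derivation leading to \eqref{chap1firstorder}, yields the identity
\benl
\sum_{j=1}^{d_\eta}\tilde\beta_j\,\bar\eta'_j(\uh)v
=\int_0^T H_u[\tilde\lambda](t)\,v(t)\,\dtt,\qquad \forall v\in\U,
\eenl
and by \eqref{chap1cq} together with the Open Mapping Theorem one can pick a bounded preimage $v\in\U$ with $\bar\eta'(\uh)v=\tilde\beta$. Exploiting complementarity—$H_{u_i}[\tilde\lambda]=0$ on $I^i_+$—and adjusting $v$ by an element of $\ker\bar\eta'(\uh)$ so that $v$ vanishes on the active set $\bigcup_i I_0^i$, the right-hand integral is forced to vanish, producing $|\tilde\beta|^2=0$ against $|\tilde\beta|=1$, a contradiction.

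The main obstacle is precisely the kernel-adjustment step. The hypothesis \eqref{chap1cq} delivers surjectivity of $\bar\eta'(\uh)$ from all of $\U$, not from the subspace of controls supported on $\bigcup_i I^i_0$; to eliminate the right-hand side of the identity above one needs a preimage supported (or at least reducible by a kernel element to being supported) on the active set. The proof is completed by combining \eqref{chap1cq} with a density argument within $\ker\bar\eta'(\uh)$, which is the technical heart of the result; the remaining pieces—convexity, closedness, and boundedness of $\alpha$—are essentially structural.
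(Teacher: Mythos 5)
Your argument follows exactly the same skeleton as the paper's. Convexity is routine once the normalization defining $\Lambda_1$ is the affine constraint $\sum_i\alpha_i=1$ rather than $|\alpha|+|\beta|=1$, and for compactness you rescale a hypothetical unbounded sequence by $|\beta_n|^{-1}$, pass to a subsequence, and obtain a nonzero limit $\tilde\lambda=(0,\tilde\beta,\tilde\psi)\in\Lambda$ with $|\tilde\beta|=1$. That is precisely the paper's reduction. The divergence is in what is done with the limit. The paper closes the argument in one line by citing Remark \ref{chap1remarkLambda1}, which asserts that under \eqref{chap1cq} every $\lambda\in\Lambda$ has $\alpha\neq 0$, so the very existence of such a $\tilde\lambda$ is already a contradiction. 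You instead try to re-derive that fact from the stationarity identity $\sum_j\tilde\beta_j\,\bar\eta'_j(\uh)v=\intT H_u[\tilde\lambda]v\,\dtt$, and you yourself flag the resulting gap: to make the right-hand side vanish you would need a preimage $v$ of $\tilde\beta$ under $\bar\eta'(\uh)$ that vanishes on the active set $\bigcup_i I_0^i$, whereas \eqref{chap1cq} only provides surjectivity from all of $\U$, not from that smaller subspace. That diagnosis is correct; surjectivity on $\U$ does not by itself give a preimage with the needed support, and the missing ingredient is exactly the content of Remark \ref{chap1remarkLambda1}, which the paper states but does not prove. So your proof is the paper's proof with the citation replaced by an honest but unfinished attempt to establish the cited fact.

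One small direction slip: in the final paragraph you twice say that the missing preimage should be "supported on the active set," but what your own reduction needs is a preimage vanishing on the active set, i.e., supported on $\bigcup_i I^i_+$, since it is on $I^i_+$ that $H_{u_i}[\tilde\lambda]=0$. Your earlier sentence ("adjusting $v$ \dots\ so that $v$ vanishes on the active set") has the direction right, so this reads as a typo rather than a conceptual error, but as written it contradicts the preceding sentence and would not make the right-hand integral vanish.
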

\begin{proof}
 Convexity is immediate. In order to show compactness, suppose on the contrary that there exists a sequence $(\lambda^k)\subset \Lambda_1$ with $\lambda^k=(\alpha^k,\beta^k,\psi^k),$ such that
$|\beta^k|\rightarrow \infty.$
Note that $(\lambda^k/|\beta^k|)\subset \Lambda$ is a bounded sequence, and hence it contains a converging subsequence. Denote with $\bar\lambda=(\bar\alpha,\bar\beta,\bar\psi)$ its limit. Thus, $\bar\lambda\in\Lambda,$ $\bar\alpha=0$ and $|\bar\beta|=1,$ contradicting Remark \ref{chap1remarkLambda1}.
\end{proof}
}\fi

Denote with $\mr{co}\,\Lambda$ the convex hull of $\Lambda.$

\begin{theorem}\label{chap1NCnew}
Let  $\wh$ be a weak minimum, then
\be
\label{chap1NCcoLambda}
\max_{\lambda\in G(\mr{co}\,\Lambda)}
\Omega_{\P_2}\lam(\xi,y,h)
\geq 0,\quad \mr{for}\ \mr{all}\ (\xi,y,h)\in \P_2.
\ee
\end{theorem}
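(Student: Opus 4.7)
The plan is to chain together three ingredients already at our disposal: the necessary condition on $\P$ from Corollary \ref{chap1CN2vector}, the Goh-type reduction of multipliers from Theorem \ref{chap1goh}, and the density/passage-to-the-limit provided by Lemma \ref{chap1DenseC2} and Lemma \ref{chap1limitOmega}. First, since $\wh$ is a weak minimum, Corollary \ref{chap1CN2vector} yields that for every $(\xi,y,h)\in\P$, $\max_{\lambda\in\Lambda}\Omega_{\P}\lam(\xi,y,\dot y)\ge 0$. Because $\Omega_{\P}\lam$ is linear in $\lambda\in\cR^s$ (the costate $\psi$ depends linearly on $(\alpha,\beta)$, as does $\ell''\lam$, $Q\lam$, $R\lam$, $M\lam$, $V\lam$, $g\lam$), the maximum over $\Lambda$ equals the maximum over its convex hull, so
\[
\max_{\lambda\in\mr{co}\,\Lambda}\Omega_{\P}\lam(\xi,y,\dot y)\ge 0,\qquad \text{for all }(\xi,y,h)\in\P.
\]
The set $\mr{co}\,\Lambda$ is convex and compact (Remark \ref{chap1Lambdacompact} plus finite-dimensionality), so Theorem \ref{chap1goh} applies with $M=\mr{co}\,\Lambda$, producing
\[
\max_{\lambda\in G(\mr{co}\,\Lambda)}\Omega_{\P}\lam(\xi,y,\dot y)\ge 0,\qquad \text{for all }(\xi,y,h)\in\P.
\]

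Next, I extend the inequality from $\P$ to $\P_2$. Given $(\bar\xi,\bar y,\bar h)\in\P_2$, Lemma \ref{chap1DenseC2} furnishes a sequence $\{(\xi^{\nu},y^{\nu},y^{\nu}(T))\}\subset\P$ converging to $(\bar\xi,\bar y,\bar h)$ in $\X_2\times\U_2\times\cR^m$. For each $\nu$ choose $\lambda^{\nu}\in G(\mr{co}\,\Lambda)$ realizing the previous maximum, so that $\Omega_{\P}[\lambda^{\nu}](\xi^{\nu},y^{\nu},\dot y^{\nu})\ge 0$. The set $G(\mr{co}\,\Lambda)$ is defined inside the compact set $\mr{co}\,\Lambda$ by the closed conditions $V_{ij}\lam(t)=0$ on $I^i_+\cap I^j_+$ (closed because $(\alpha,\beta)\mapsto V_{ij}\lam(t)$ is continuous, in fact linear), hence it is compact. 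Extract a subsequence $\lambda^{\nu}\to\bar\lambda\in G(\mr{co}\,\Lambda)$.

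The final step is to pass to the limit in $\Omega_{\P}[\lambda^{\nu}](\xi^{\nu},y^{\nu},\dot y^{\nu})$ and identify the limit with $\Omega_{\P_2}[\bar\lambda](\bar\xi,\bar y,\bar h)$. This is where I expect the main technical difficulty, since Lemma \ref{chap1limitOmega} is stated for a fixed $\lambda$ but we need joint convergence in $(\lambda,\xi,y,h)$. I would resolve this by splitting
\[
\Omega_{\P}[\lambda^{\nu}](\xi^{\nu},y^{\nu},\dot y^{\nu})-\Omega_{\P_2}[\bar\lambda](\bar\xi,\bar y,\bar h)
=\bigl(\Omega_{\P}[\lambda^{\nu}]-\Omega_{\P}[\bar\lambda]\bigr)(\xi^{\nu},y^{\nu},\dot y^{\nu})
+\Omega_{\P}[\bar\lambda](\xi^{\nu},y^{\nu},\dot y^{\nu})-\Omega_{\P_2}[\bar\lambda](\bar\xi,\bar y,\bar h).
\]
The second difference tends to $0$ by Lemma \ref{chap1limitOmega}. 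For the first, linearity in $\lambda$ together with uniform boundedness of $\|\xi^{\nu}\|_{\infty}$, $\|y^{\nu}\|_{\infty}$ (which follow from the convergence in $\X_2\times\U_2$ via Hoffman-type bounds used in the construction of $\P$ and Gronwall applied to \eqref{chap1xieq}, but more simply from the compactness of $G(\mr{co}\,\Lambda)$ giving a uniform Lipschitz bound on the coefficients) lets one estimate the difference by $|\lambda^{\nu}-\bar\lambda|$ times a constant, which goes to $0$. Combining these gives $\Omega_{\P_2}[\bar\lambda](\bar\xi,\bar y,\bar h)\ge 0$, and since $\bar\lambda\in G(\mr{co}\,\Lambda)$, \eqref{chap1NCcoLambda} follows.
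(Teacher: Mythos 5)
Your plan is the same as the paper's: Corollary \ref{chap1CN2vector} plus Theorem \ref{chap1goh} with $M=\mr{co}\,\Lambda$ give the inequality on $\P$, then Lemma \ref{chap1DenseC2} and Lemma \ref{chap1limitOmega} (together with compactness of $G(\mr{co}\,\Lambda)$) extend it to $\P_2$. The passage from $\Lambda$ to $\mr{co}\,\Lambda$ is fine; in fact you do not even need linearity, since $\Lambda\subset\mr{co}\,\Lambda$ makes the maximum over the convex hull at least as large.

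There is, however, a gap in your estimate of the first difference. You bound $\bigl(\Omega_{\P}[\lambda^{\nu}]-\Omega_{\P}[\bar\lambda]\bigr)(\xi^{\nu},y^{\nu},\dot y^{\nu})=\Omega_{\P}[\lambda^{\nu}-\bar\lambda](\xi^{\nu},y^{\nu},\dot y^{\nu})$ by $|\lambda^{\nu}-\bar\lambda|$ times a constant using only uniform bounds on $\|\xi^{\nu}\|_{\infty}$ and $\|y^{\nu}\|_{\infty}$. This does not control the term $\int_0^T(V[\lambda^{\nu}-\bar\lambda]y^{\nu},\dot y^{\nu})\,\dtt$ appearing in $\Omega_{\P}$: it involves $\dot y^{\nu}$, whose $L_1$-norm is not bounded along the density sequence (only $\|y^{\nu}\|_2$ is controlled, so $\dot y^{\nu}$ can oscillate wildly). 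The rescue is exactly the $G$-structure that you invoke elsewhere but do not use here: since $\lambda^{\nu},\bar\lambda\in G(\mr{co}\,\Lambda)$, the difference $\lambda^{\nu}-\bar\lambda$ also satisfies $V_{ij}[\lambda^{\nu}-\bar\lambda]=0$ on $I_+^i\cap I_+^j$, so Proposition \ref{chap1cnvector} lets you integrate the $V$-term by parts and rewrite it without $\dot y^{\nu}$, after which the bound by $|\lambda^{\nu}-\bar\lambda|\cdot\bigl(\|\xi^{\nu}\|_2^2+\|y^{\nu}\|_2^2+\dots\bigr)$ is legitimate. Equivalently, since $(\xi^{\nu},y^{\nu},y^{\nu}(T))\in\P$ and $\lambda^{\nu}-\bar\lambda$ satisfies the $G$-vanishing condition, you have $\Omega_{\P}[\lambda^{\nu}-\bar\lambda](\xi^{\nu},y^{\nu},\dot y^{\nu})=\Omega_{\P_2}[\lambda^{\nu}-\bar\lambda](\xi^{\nu},y^{\nu},y^{\nu}(T))$ (cf.~Remark \ref{chap1OmegasRemark}), which is manifestly continuous in both arguments on the relevant topology. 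With this correction the argument closes.
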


\begin{proof}
Corollary \ref{chap1CN2vector} together with Theorem \ref{chap1goh} applied to $M:=\mr{co}\,\Lambda$ yield
\benl
\max_{\lambda\in G(\mr{co}\,\Lambda)}
\Omega_{\P}\lam(\xi,y,\dot y)
\geq 0,\quad \mr{for}\ \mr{all}\ (\xi,y,y(T))\in \P.
\eenl
The result follows from Lemma \ref{chap1DenseC2} and Lemma \ref{chap1limitOmega}.
\end{proof}

\begin{remark}\label{chap1remarkLambda1}
Notice that in case \eqref{chap1cq} is not satisfied, condition \eqref{chap1NCcoLambda} does not provide any useful information as $0\in\mr{co}\,\Lambda.$ On the other hand, if \eqref{chap1cq} holds, every $\lambda= (\alpha,\beta,\psi)\in\Lambda$ necessarily has $\alpha\neq0,$ and thus $0\notin \mr{co}\,\Lambda.$
\end{remark}


\section{Sufficient condition}

Consider the problem for a scalar control, i.e.  let $m=1.$ This section provides a sufficient condition for  Pontryagin
optimality.
\begin{definition} \label{chap1defgamma}
Given $(y,h)\in\U_2\times \cR,$ let
\benl\label{chap1gamma}
\gamma(y,h):=\intT y(t)^2\dtt+|h|^2.
\eenl
\end{definition}

\begin{definition}
A sequence $\{v_k\}\subset \Uspace$ \textit{converges to 0 in the Pontryagin sense} if $\|v_k\|_1\rightarrow 0$ and there exists $N$ such that $\|v_k\|_{\infty}<N.$
\end{definition}
\begin{definition}
We say that $\wh$ satisfies  \textit{$\gamma-$quadratic growth condition in the Pontryagin sense} if there exists $\rho>0$ such that, for every sequence of feasible variations $\{(\delta x_k,v_k)\}$ with
$\{v_k\}$ converging to 0 in the Pontryagin sense,
\be\label{chap1qgdef}
J(\uh+v_k)-J(\uh)\geq \rho\gamma(y_k,y_k(T)),
\ee
holds for a large enough $k,$ where $y_k$ is defined by \eqref{chap1yxi}. Equivalently, for all $N\gr 0,$ there exists $\varepsilon\gr 0$ such that if $\|v\|_{\infty}\mi N$ and $\|v\|_{1}\mi\varepsilon,$ then
\eqref{chap1qgdef} holds.
\end{definition}

\begin{definition}
 We say that $\wh$ is \textit{normal} if $\alpha_0\gr 0$ for every $\lambda\in\Lambda.$
\end{definition}

\begin{theorem}
\label{chap1sc2scalar}
Suppose that
there exists $\rho\gr 0$ such that
\be\label{chap1unifpos}
\max_{\lambda\in\Lambda}\Omega_{\P_2}\lam(\xi,y,h)\geq \rho\gamma(y,h),\quad \mr{for}\ \mr{all}\ (\xi,y,h)\in\P_2.\ee
Then $\wh$ is a Pontryagin minimum satisfying $\gamma-$ quadratic growth. Furthermore, if $\wh$ is normal, the converse holds.
\end{theorem}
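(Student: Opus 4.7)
The plan is to treat the sufficient direction by a blow-up/contradiction argument on a counter-example sequence, and the converse under normality by a direct Lagrangian expansion. Throughout I would rely on Lemma \ref{chap1expansionlagrangian}, Theorem \ref{chap1secondvariation}, Lemma \ref{chap1limitOmega}, and the compactness of $\Lambda$ (Remark \ref{chap1Lambdacompact}). In the scalar case $m=1$, the advantage is that Remark \ref{chap1OmegasRemark} makes $\Omega_{\P_2}[\lambda]$ a well-defined quadratic form on $\X_2\times\U_2\times \cR$ depending only on $(\xi,y,h)$, not on $v$.

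\textbf{Sufficient direction.} Suppose $\wh$ fails $\gamma$-quadratic growth. Then there exist $N\gr 0$ and a sequence $\{v_k\}\subset\U$ of feasible variations with $\|v_k\|_\infty\le N$, $\|v_k\|_1\to 0$, and $J(\uh+v_k)-J(\uh)<\rho_k\,\gamma_k$ with $\rho_k\searrow 0$, where $y_k(t):=\int_0^t v_k$, $\delta x_k$ is the associated state variation, $\xi_k:=\delta x_k-B\,y_k$, and $\gamma_k:=\gamma(y_k,y_k(T))$. I would rescale by $\sqrt{\gamma_k}$ to obtain $(\tilde\xi_k,\tilde y_k,\tilde h_k)$ with $\gamma(\tilde y_k,\tilde h_k)=1$. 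Gronwall on \eqref{chap1xieq} gives $\|\xi_k\|_\infty=O(\|y_k\|_2)=O(\sqrt{\gamma_k})$, so $\|\tilde\xi_k\|_\infty$ is bounded; extracting, Arzel\`a-Ascoli yields uniform convergence $\tilde\xi_k\to\bar\xi$, weak $L_2$-convergence $\tilde y_k\rightharpoonup\bar y$, and $\tilde h_k\to\bar h$, with $(\bar\xi,\bar y,\bar h)\in\P_2$ (the constant-on-active-set and endpoint conditions defining $\P_2$ pass to the limit). Expanding $\Phi[\lambda]$ via Lemma \ref{chap1expansionlagrangian}, using $\sum_{i\ge 1}\alpha_i\varphi_i(\xh(T)+\delta x_k(T))\le 0$, $\beta_j\eta_j=0$ at the perturbed endpoint, and $\intT H_u[\lambda]v_k\,\dtt\ge 0$ (from $H_u[\lambda]\ge 0$ and $v_{k,i}\ge 0$ on $I_0^i$), I would obtain
\[
\Omega[\lambda](\delta x_k,v_k)\le\alpha_0\rho_k\,\gamma_k+R_k[\lambda], \qquad R_k[\lambda]=o(\gamma_k),
\]
uniformly in $\lambda\in\Lambda$. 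By Theorem \ref{chap1secondvariation} and Remark \ref{chap1OmegasRemark}, this rewrites as $\Omega_{\P_2}[\lambda](\xi_k,y_k,y_k(T))\le \alpha_0\rho_k\gamma_k+o(\gamma_k)$. Dividing by $\gamma_k$, choosing $\lambda_k$ realising the maximum in the hypothesis at $(\tilde\xi_k,\tilde y_k,\tilde h_k)$, and extracting $\bar\lambda\in\Lambda$ by compactness,
\[
\rho\le\limsup_k\Omega_{\P_2}[\lambda_k](\tilde\xi_k,\tilde y_k,\tilde h_k)\le 0,
\]
which is the desired contradiction.

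\textbf{Main obstacle.} The most delicate step is the limit passage at the last displayed inequality: one must upgrade the weak convergence $\tilde y_k\rightharpoonup \bar y$ to strong $L_2$-convergence in order to conclude $\gamma(\bar y,\bar h)=1$ and obtain equality, not merely $\le$, in $\Omega_{\P_2}[\bar\lambda](\bar\xi,\bar y,\bar h)\ge \rho$. The strategy follows Dmitruk \cite{Dmi77,Dmi87}: split $\Omega_{\P_2}[\lambda]$ into a weakly-continuous part and the purely quadratic $R[\lambda](y,y)$-term; the uniform coercivity $\Omega_{\P_2}[\lambda]\ge\rho\gamma$ forces $R[\lambda]$ to be itself coercive in $y$, so weak convergence plus convergence of the $R$-norm upgrades to strong convergence. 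The cubic and $H_{uxx}$-remainders hidden in $R_k[\lambda]$ are controlled via $\|\delta x_k\|_\infty=O(\sqrt{\gamma_k})$ (from the ODE) combined with $\|v_k\|_\infty\le N$ and $\|v_k\|_1\to 0$: interpolating, $\intT|v_k|\,|\delta x_k|^2\,\dtt=o(\gamma_k)$, as required.

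\textbf{Converse under normality.} Assume $\wh$ is normal and satisfies $\gamma$-quadratic growth with constant $\rho$. Fix any $\lambda\in\Lambda$: by normality $\alpha_0\gr 0$. For any feasible Pontryagin-small variation $\delta w$,
\[
\Phi[\lambda](\wh+\delta w)-\Phi[\lambda](\wh)\ge\alpha_0\big(J(\uh+v)-J(\uh)\big)\ge\alpha_0\rho\,\gamma(y,y(T)).
\]
Expanding $\Phi$ via Lemma \ref{chap1expansionlagrangian}, discarding the non-negative $\intT H_u[\lambda]v\,\dtt$-term, applying Theorem \ref{chap1secondvariation} and Remark \ref{chap1OmegasRemark}, and using Lemma \ref{chap1DenseC2} to approximate an arbitrary $(\xi,y,h)\in\P_2$ by elements of $\P$ arising from feasible variations, I would obtain $\Omega_{\P_2}[\lambda](\xi,y,h)\ge\alpha_0\rho\,\gamma(y,h)$ for every $(\xi,y,h)\in\P_2$, which is the required uniform positivity (with $\rho$ replaced by $\alpha_0\rho$).
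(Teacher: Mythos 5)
Your \textbf{sufficient direction} follows the paper's overall route (blow-up/rescaling, weak limit, Dmitruk-style Legendre-form argument to upgrade to strong convergence), and you correctly flag the key obstacle. Two steps are glossed over, however. First, the displayed chain $\rho\le\limsup_k\Omega_{\P_2}[\lambda_k](\tilde\xi_k,\tilde y_k,\tilde h_k)\le 0$ is not legitimate as written: the hypothesis \eqref{chap1unifpos} applies only to elements of $\P_2$, and the rescaled sequence $(\tilde\xi_k,\tilde y_k,\tilde h_k)$ is generically \emph{not} in $\P_2$ (for instance, $\tilde y_k$ need not be constant on $I_0$, since the perturbation $v_k$ is free to be positive there). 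Only the \emph{limit} can be placed in $\P_2$, and proving that it does lie in $\P_2$ is a substantive argument (essentially part~(a) of the paper's proof, using $\int H_u[\lambda]v_k=O(\gamma_k)$, strict complementarity, and the fact that a weak limit of nondecreasing functions is nondecreasing, Lemma~\ref{chap1l1}); the phrase ``pass to the limit'' hides this. Second, when you say the coercivity hypothesis ``forces $R[\lambda]$ to be itself coercive,'' note that the hypothesis is a \emph{max} over $\Lambda$, not a uniform bound; passing from the max to a single multiplier whose form is weakly l.s.c.\ is exactly the content of Lemma~\ref{chap1quadform}, which is needed here and should be cited.

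The \textbf{converse} argument has a genuine error. The claimed inequality $\Phi[\lambda](\wh+\delta w)-\Phi[\lambda](\wh)\ge\alpha_0\bigl(J(\uh+v)-J(\uh)\bigr)$ goes the wrong way: for a feasible point one has $\Phi[\lambda](\wh+\delta w)=\ell[\lambda](x(T))=\alpha_0\varphi_0(x(T))+\sum_{i\ge1}\alpha_i\varphi_i(x(T))\le\alpha_0 J(\uh+v)$, because the omitted terms are nonpositive, while $\Phi[\lambda](\wh)=0=\alpha_0 J(\uh)$; so in fact $\Phi[\lambda](\wh+\delta w)-\Phi[\lambda](\wh)\le\alpha_0\bigl(J(\uh+v)-J(\uh)\bigr)$. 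Consequently a direct Lagrangian expansion turns quadratic growth of $J$ into an \emph{upper} bound on $\Omega$, not the lower bound you need. There are two further structural problems: fixing a single $\lambda\in\Lambda$ cannot yield a statement whose truth depends on \emph{maximizing} over $\Lambda$ (if the argument worked it would prove the inequality for every multiplier, which is false in general); and you cannot simply approximate an arbitrary $(\xi,y,h)\in\P_2$ by ``elements of $\P$ arising from feasible variations'' — elements of $\P$ solve the linearized system, and relating them to genuine feasible variations is precisely what requires a constraint-qualification/implicit-function argument and is bypassed by the necessary-condition machinery. The paper's converse proof is entirely different and avoids all of this: it observes that $\gamma$-quadratic growth means $(\wh,\yh)$ is a Pontryagin minimum of the auxiliary problem \eqref{chap1PA} with cost $J-\rho'\gamma(y-\yh,y(T)-\yh(T))$, applies the already-proved second order necessary condition (Theorem~\ref{chap1NC2}) to that problem — which yields, for each critical direction, \emph{some} multiplier $\lambda$ with $\Omega[\lambda]-\alpha_0\rho'\gamma\ge0$ — and then uses normality to bound $\alpha_0$ away from zero and the Goh transformation to land on \eqref{chap1unifpos}. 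This ``perturb the cost and invoke the necessary condition'' device is the missing idea.
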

\begin{remark}
 In case the bang arcs are absent, i.e.  the control is totally singular, this theorem reduces to one proved in Dmitruk \cite{Dmi83,Dmi87}.
\end{remark}

Recall that $\Phi$ is defined in \eqref{chap1lagrangian}.
We will use the following technical result.
\begin{lemma}
\label{chap1lemmasc2}
Consider $\{v_k\}\subset \U$ converging to 0 in the Pontryagin sense. Let $u_k:=\uh+v_k$ and let $x_k$ be the corresponding solution of equation \eqref{chap1stateeq}. Then for every $\lambda\in\Lambda,$
\be
\label{chap1taylor0}
\Phi[\lambda](x_k,u_k)
=\Phi[\lambda](\xh,\uh)+\int_0^T H_u\lam(t)v_k(t)\dtt+
\Omega[\lambda](z_k,v_k)+o(\gamma_k),
\ee
where $z_k$ is defined by \eqref{chap1lineareq}, $\gamma_k:=\gamma(y_k,y_k(T)),$ and $y_k$ is defined by \eqref{chap1yxi}.
\end{lemma}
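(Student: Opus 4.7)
The plan is to start from the expansion in Lemma~\ref{chap1expansionlagrangian} applied with $\delta u = v_k$ and $\delta x = x_k - \hat x$, which gives
\begin{align*}
\Phi[\lambda](x_k,u_k) - \Phi[\lambda](\hat x, \hat u) &= \int_0^T H_u[\lambda] v_k\, dt + \Omega[\lambda](\delta x_k, v_k) \\
&\quad + \tfrac12 \int_0^T (H_{uxx}[\lambda] \delta x_k, \delta x_k, v_k)\, dt\\
&\quad + \mathcal{O}(|\delta x_k(T)|^3) + \int_0^T |\hat u + v_k|\, \mathcal{O}(|\delta x_k|^3)\, dt.
\end{align*}
So two things must be done: (a) replace $\Omega[\lambda](\delta x_k, v_k)$ by $\Omega[\lambda](z_k,v_k)$ at cost $o(\gamma_k)$, and (b) show each of the three remainder terms on the last two lines is $o(\gamma_k)$.

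First I would establish the basic a priori estimates. Applying the nonlinear Goh substitution $\tilde\xi_k := \delta x_k - B(t) y_k$ (with $A,B$ as in \eqref{chap1AB}) and using Gronwall for the resulting equation $\dot{\tilde\xi}_k = A \tilde\xi_k + B_1 y_k + \mathcal O(|\delta x_k|^2 + |v_k||\delta x_k|)$, one obtains for $k$ large enough
\[
\|\delta x_k\|_\infty \le C\|y_k\|_\infty,\qquad |\delta x_k(T)|^2 + \int_0^T |\delta x_k|^2\, dt \le C\gamma_k.
\]
In particular $\|\delta x_k\|_\infty \to 0$, since Pontryagin convergence yields $\|y_k\|_\infty \le \|v_k\|_1 \to 0$. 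The endpoint cubic is then $|\delta x_k(T)|^3 \le \|\delta x_k\|_\infty \cdot C\gamma_k = o(\gamma_k)$ and the running cubic is $\le \|(\hat u+v_k)\|_\infty \|\delta x_k\|_\infty \int|\delta x_k|^2 = o(\gamma_k)$. To replace $\delta x_k$ by $z_k$ in $\Omega[\lambda](\cdot,v_k)$, set $e_k := \delta x_k - z_k$; it satisfies $\dot e_k = A e_k + r_k$ with $|r_k| = \mathcal O(|\delta x_k|^2 + |v_k||\delta x_k|)$, and Gronwall plus the previous bounds give $\|e_k\|_\infty = o(\sqrt{\gamma_k})$, enough so that both the quadratic state term in $\Omega$ and the mixed $H_{ux}\,\delta x \cdot v_k$ term change by $o(\gamma_k)$ when $\delta x_k$ is replaced by $z_k$.

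The main obstacle is the cubic term $\tfrac12\int H_{uxx}[\lambda](\delta x_k, \delta x_k, v_k)\, dt$: the naive bound is $\mathcal O(\|v_k\|_\infty \int|\delta x_k|^2) = \mathcal O(\gamma_k)$, which is too weak. The trick is that $v_k = \dot y_k$ is a derivative, so one integrates by parts to trade $v_k$ for $y_k$. The boundary contribution is $H_{uxx}(T)(\delta x_k(T))^2 y_k(T) = \mathcal O(\gamma_k |y_k(T)|) = o(\gamma_k)$. The interior terms produced are either of the form $\int \dot H_{uxx}(\delta x_k)^2 y_k\, dt$ and $\int H_{uxx}\,\delta x_k \cdot A\delta x_k \cdot y_k\, dt$, both bounded by $C\|\delta x_k\|_\infty \|\delta x_k\|_2 \|y_k\|_2 = o(\gamma_k)$ via Cauchy--Schwarz; or of the form $\int H_{uxx}\delta x_k\cdot Bv_k\cdot y_k\,dt$, where $v_k$ reappears but now paired with a factor of $y_k$. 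For this last piece I would use $v_k y_k = \tfrac12 \tfrac{d}{dt}(y_k^2)$ and integrate by parts again, and if a residual $\int g(t) v_k y_k^2\,dt$ remains, apply $v_k y_k^2 = \tfrac13 \tfrac{d}{dt}(y_k^3)$ once more; all boundary contributions are $o(\gamma_k)$ because they carry a factor $|y_k(T)| \to 0$, and all remaining integrals acquire at least one extra power of $\|y_k\|_\infty \to 0$ multiplying $\int y_k^2 \le \gamma_k$. This iterated elimination of $v_k$ against powers of $y_k$ is the standard Goh-type bookkeeping and is where the real work lies; I would encapsulate it as a single appendix lemma stating that for any smooth tensor field $g$ and any monomial $P(\delta x, y)$ of total degree at least two, one has $\int g(t) P(\delta x_k, y_k) v_k\, dt = o(\gamma_k)$. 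Combining the three pieces yields \eqref{chap1taylor0}.
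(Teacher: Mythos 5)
Your overall strategy matches the paper's: start from Lemma~\ref{chap1expansionlagrangian}, establish a priori $L_2$- and $L_\infty$-bounds on $\delta x_k$ and on $\eta_k:=\delta x_k-z_k$ (these are the content of Lemmas~\ref{chap1lemmabound2} and~\ref{chap1etaPont}), and then eliminate $v_k$ by repeated integration by parts against powers of $y_k$. The treatment of the endpoint and running cubic remainders and of the $H_{uxx}$ term is essentially the paper's argument. However, there is a concrete gap in your treatment of the mixed term.

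You assert that $\|e_k\|_\infty=o(\sqrt{\gamma_k})$ is ``enough'' to conclude that replacing $\delta x_k$ by $z_k$ in the mixed term $\int_0^T (H_{ux}[\lambda]\,e_k,v_k)\,\dtt$ costs only $o(\gamma_k)$. This does not follow from any direct estimate: the best you get is $\lesssim \|e_k\|_\infty\|v_k\|_1 = o(\sqrt{\gamma_k})\cdot o(1)$, and under Pontryagin convergence there is no control of $\|v_k\|_1$ by $\sqrt{\gamma_k}$ (take $v_k$ oscillating in sign rapidly on a shrinking set: $y_k$ can be much smaller in $L_2$ than $\|v_k\|_1$). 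Moreover, your proposed appendix lemma is stated for monomials $P(\delta x_k,y_k)$ of total degree at least two, so it does not cover this degree-one term either. The fix is exactly the move you reserve for the $H_{uxx}$ term: integrate $\int(H_{ux}\eta_k,v_k)\,\dtt$ by parts once to trade $v_k$ for $y_k$, then insert the equation for $\dot\eta_k$. The boundary term and the pieces carrying $\dot H_{ux}\eta_k$, $A\eta_k$, and the $o(\sqrt{\gamma_k})$-error $\zeta_k$ are all $o(\gamma_k)$ by Cauchy--Schwarz together with $\|\eta_k\|_\infty=o(\sqrt{\gamma_k})$; the only surviving piece is $\int H_{ux}\,v_k f_1'(\xh)\delta x_k\,y_k\,\dtt$, which is now degree two in $(\delta x_k,y_k)$ times $v_k$ and hence does fall under your appendix lemma (this is exactly what the paper does in \eqref{chap1deltaOmega2}--\eqref{chap1Huxf1prime}). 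So your sketch is salvageable, but the mixed term is not a side remark: it needs the same preliminary integration by parts as the cubic term before your lemma applies, and your proposal as written passes over this.
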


\begin{proof}
By Lemma \ref{chap1expansionlagrangian} we can write
\benl
\label{chap1taylorlemma}
\Phi[\lambda](x_k,u_k)
=\Phi[\lambda](\xh,\uh)+\int_0^T H_u\lam(t)v_k(t)\dtt+
\Omega[\lambda](z_k,v_k)+R_k,
\eenl
where, in view of Lemma \ref{chap1lemmabound2},
\be
\label{chap1Rk}
 R_k:=
\Delta_k\Omega[\lambda]
+\int_0^T (H_{uxx}[\lambda](t)\delta x_k(t),\delta
x_k(t),v_k(t))\dtt + o(\gamma_k),
\ee
with $\delta x_k:=x_k-\xh,$ and
\be
\label{chap1DeltaOmega}
\Delta_k\Omega[\lambda]:= \Omega[\lambda](\delta
x_k,v_k)-\Omega[\lambda](z_k,v_k).
\ee
Next, we prove that
\begin{equation}
\label{chap1rest}
R_k=o(\gamma_k).
\end{equation}
Note  that $\Q(a,a)-\Q(b,b)=\Q(a+b,a-b),$ for any bilinear mapping $\Q,$ and any pair $a,b.$ Put
$\eta_k:=\delta x_k-z_k.$
Hence, from \eqref{chap1DeltaOmega}, we get
\begin{align*}
\Delta_k\Omega[\lambda]
=&\half\ell''\lam(\xh(T))(\delta x_k(T)+z_k(T),\eta_k(T))\\
&+\half\int_0^T(H_{xx}[\lambda](\delta x_k+z_k),\eta_k)\dtt
+\int_0^T (H_{ux}[\lambda]\eta_k,v_k)\dtt.
\end{align*}
By Lemmas \ref{chap1lemmabound2} and \ref{chap1etaPont} in the Appendix, the
first and the second terms are of order $o(\gamma_k).$  Integrate by parts the last
term to obtain
\begin{align}
\label{chap1deltaOmega2}
\int_0^T &(H_{ux}\lam\eta_k,v_k)\dtt\\
&=\left[(H_{ux}\lam\eta_k,y_k)\right]_0^T-\int_0^T \{(\dot
H_{ux}\lam\eta_k,y_k)+(H_{ux}\lam\dot\eta_k,y_k)\}\dtt.
\label{chap1dotetak}
\end{align}
Thus, by Lemma \ref{chap1etaPont} we deduce that the first two terms in \eqref{chap1dotetak} are of order $o(\gamma_k).$
It remains to deal with last term in the integral. Replace $\dot{\eta}_k$ by its expression in equation \eqref{chap1etadot} of Lemma \ref{chap1etaPont}:
\be
\label{chap1Huxetadot}
\begin{split}
\int_0^T (H_{ux}\lam\dot\eta_k,y_k)\dtt
&=\int_0^T (H_{ux}\lam\left(\sum_{i=0}^1 \uh_if_i'(\xh)\eta_k+v_kf_1'(\xh)\delta x_k+\zeta_k\right),y_k)\dtt\\
&=o(\gamma_k)+\int_0^T \ddt \left(\frac{y_k^2}{2}\right) H_{ux}\lam f_1'(\xh)\delta x_k\dtt,
\end{split}
\ee
where the second equality follows from Lemmas \ref{chap1lemmabound2} and \ref{chap1etaPont}. Integrating the last term by parts, we obtain
\be
\begin{split}\label{chap1Huxf1prime}
&\int_0^T \ddt \left(\frac{y_k^2}{2}\right) H_{ux}\lam
f_1'(\xh)\delta x_k\dtt
= \left[\frac{y_k^2}{2}H_{ux}\lam f_1'(\xh)\delta x_k\right]_0^T\\
&-\intT \frac{y_k^2}{2}\ddt \left(H_{ux}\lam f_1'(\xh)\right)\delta x_k\dtt
-\intT \frac{y_k^2}{2}H_{ux}\lam f_1'(\xh)\dot{\delta x_k}\dtt\\
&=o(\gamma_k)-\intT \ddt\left(\frac{y_k^3}{6}\right)H_{ux}\lam f_1'(\xh) f_1(\xh)\dtt \\
&=o(\gamma_k)-\left[\frac{y_k^3}{6} H_{ux}\lam f_1'(\xh) f_1(\xh)\right]_0^T
+\intT \frac{y_k^3}{6}\ddt\left(H_{ux}\lam f_1'(\xh) f_1(\xh)\right)\dtt\\
&=o(\gamma_k),
\end{split}
\ee
where we used Lemma \ref{chap1etaPont} and, in particular, equation \eqref{chap1deltaxdot}.
From \eqref{chap1Huxetadot} and \eqref{chap1Huxf1prime}, it follows that the term in \eqref{chap1deltaOmega2} is of order $o(\gamma_k).$ Thus,
\be\label{chap1DeltaOmega'}
\Delta_k\Omega[\lambda]\leq o({\gamma_k}).
\ee
Consider now the third order term in \eqref{chap1Rk}:
\be\label{chap1Huxx}
\begin{split}
&\intT (H_{uxx}\lam \delta x_k,\delta x_k,v_k)\dtt
=[y_k\delta x_k^\top H_{uxx}\lam \delta x_k]_0^T\\
&-\intT y_k\delta x_k^\top \dot{H}_{uxx}\lam\delta x_k\dtt
-2\intT y_k\delta x_k^\top H_{uxx}\lam\dot{\delta x_k}\dtt
\\
&=o(\gamma_k)-\intT \ddt(y_k^2) \delta x_k^\top H_{uxx}\lam f_1(\xh)\dtt \\
&=o(\gamma_k)-\left[y_k^2\delta x_k^\top H_{uxx}\lam f_1(\xh)\right]_0^T
-\intT y_k^2v_kf_1(\xh)^\top H_{uxx}\lam f_1(\xh)\dtt
=o(\gamma_k),
\end{split}
\ee
by Lemmas \ref{chap1lemmabound2} and \ref{chap1etaPont}.
The last inequality follows from integrating by parts one more time as it was done in \eqref{chap1Huxf1prime}.
Consider expression \eqref{chap1Rk}. By inequality
\eqref{chap1DeltaOmega'} and equation \eqref{chap1Huxx}, equality \eqref{chap1rest} is obtained and thus, the desired result follows.

\end{proof}


\begin{proof}

[of Theorem \ref{chap1sc2scalar}] \textit{Part 1.} First we prove that if $\wh$ is a normal Pontryagin minimum satisfying the $\gamma-$quadratic growth condition in the Pontryagin sense then \eqref{chap1unifpos} holds for some $\rho>0.$
Here the necessary condition of Theorem \ref{chap1NC2} is used.
Define $\yh(t):=\int_0^t \uh(s)\mr{d}s,$ and note that  $(\wh,\yh)$ is, for some $\rho'>0,$ a Pontryagin minimum of
\be\label{chap1PA}
\begin{split}
 &\tilde J:=J-\rho' \gamma(y-\yh,y(T)-\yh(T))\rightarrow \min,\\
 &\text{\eqref{chap1stateeq}-\eqref{chap1finalcons},}\quad
 \dot y=u,\quad y(0)=0.
\end{split}
\ee
Observe that the critical cone $\tilde\C_2$ for \eqref{chap1PA} consists of the points $(z,v,\delta y)$ in $\X_2 \times \U_2\times W^1_2(0,T;\cR)$ verifying $(z,v)\in\C_2,$ $\dot{\delta y}=v$ and $\delta y(0)=0.$ Since the pre-Hamiltonian at point $(\wh,\yh)$ coincides with the original pre-Hamiltonian, the set of multipliers for \eqref{chap1PA} consists of the points $(\lambda,\psi_y)$ with $\lambda\in\Lambda.$

Applying the second order necessary condition of Theorem \ref{chap1NC2}  at the point $(\wh,\yh)$ we see that,
for every $(z,v)\in\C_2$ and $\delta y(t):=\int_0^tv(s)\mr{d}s,$ there exists $\lambda\in\Lambda$ such that
\be\label{chap1Omegaminusgamma}
\Omega\lam(z,v)-\alpha_0\rho'(\|\delta y\|_2^2+\delta y^2(T))
\geq0,
\ee
where $\alpha_0\gr 0$ since $\wh$ is normal. Take $\rho:=\min_{\lambda\in\Lambda}\alpha_0\rho'\gr 0.$
Applying the Goh transformation in \eqref{chap1Omegaminusgamma}, condition \eqref{chap1unifpos} for the constant $\rho$ follows.

\textit{Part 2.} We shall prove that if \eqref{chap1unifpos} holds for some $\rho>0,$ then $\wh$ satisfies $\gamma-$quadratic growth in the Pontryagin sense.
On the contrary, assume that the
quadratic growth condition \eqref{chap1qgdef} is not valid. Consequently,  there exists
a sequence $\{v_k\}\subset \Uspace$ converging to 0 in the Pontryagin sense such that, denoting $u_k:=\uh+v_k,$
\begin{equation}\label{chap1qgrowth}
J(\uh+v_k)\leq J(\uh)+o(\gamma_k),
\end{equation}
where $y_k(t):=\int_0^tv_k(s)ds$ and
$\gamma_k:=\gamma(y_k,y_k(T)).$
 Denote by $x_k$ the solution
of equation \eqref{chap1stateeq} corresponding to $u_k,$ define
$w_k:=(x_k,u_k)$ and let $z_k$ be the
solution of \eqref{chap1lineareq} associated with $v_k.$   Take any $\lambda\in \Lambda.$ Multiply
inequality \eqref{chap1qgrowth} by $\alpha_0,$ add the
nonpositive term
$\sum_{i=0}^{d_{\varphi}}\alpha_i\varphi_i(x_k(T))+\sum_{j=1}^{d_{\eta}}\beta_j\eta_j(x_k(T))$
to its left-hand side, and obtain the inequality
\begin{equation}\label{chap1quadlag}
\Phi[\lambda](x_k,u_k)\leq\Phi[\lambda](\xh,\uh)+o(\gamma_k).
\end{equation}
Recall expansion \eqref{chap1taylor0}.
Let
$(\bar{y}_k,\hb_k):=(y_k,y_k(T))/\sqrt{\gamma_k}.$ Note that the
elements of this sequence have unit norm in $\Uspace_2\times
\mathbb{R}.$ By the Banach-Alaoglu Theorem, extracting if
necessary a sequence, we may assume that there exists
$(\bar{y},\hb)\in \Uspace_2\times \cR$  such that
\begin{equation}\label{chap1limityk}
\bar{y}_k\rightharpoonup \bar{y},\ \mr{and}\
\bar{h}_k\rightarrow \bar{h},
\end{equation}
where the first limit is taken in the weak topology of
$\Uspace_2.$
 The remainder of the proof is split into two parts.
\begin{itemize}
 \item [{\bf(a)}] Using equations
 \eqref{chap1taylor0} and \eqref{chap1quadlag} we prove that
$(\bar{\xi},\yb,\hb)\in\P_2,$ where $\bar{\xi}$ is a solution of \eqref{chap1xieq}.
 \item [{\bf(b)}] We prove that $(\yb,\hb)=0$ and that it is
the limit of $\{(\yb_k,\hb_k)\}$ in the strong sense. This
leads to a contradiction since each  $(\yb_k,\hb_k)$ has unit norm.
\end{itemize}

\noindent\textbf{(a)} We shall prove that $(\bar\xi,\bar{y},\hb)\in\P_2.$
From \eqref{chap1taylor0} and \eqref{chap1quadlag} it
follows that
\benl
0\leq \int_0^TH_u[\lambda](t)v_k(t)\dtt\leq
-\Omega_{\P_2}\lam(\xi_k,y_k,h_k)+o(\gamma_k),
\eenl
where $\xi_k$ is solution of \eqref{chap1xieq} corresponding to $y_k.$ The first inequality holds as $H_u[\lambda]v_k\geq 0$ almost everywhere on $[0,T]$ and we replaced $\Omega_{\P}$ by $\Omega_{\P_2}$ in view of Remark \ref{chap1OmegasRemark}.
By the continuity of mapping $\Omega_{\P_2}\lam$ over
$\X_2\times\Uspace_2\times \cR$ deduce that
\benl
0\leq \int_0^TH_u[\lambda](t)v_k(t)\dtt\leq O(\gamma_k),
\eenl
and thus, for each composing interval $(c,d)$ of $I_0,$
\be \label{chap1cseq1}
\lim_{k\rightarrow \infty} \int_c^d H_u
[\lambda](t)\varphi(t)\frac{v_k(t)}{\sqrt{\gamma_k}}dt=0,
\ee
for every nonnegative Lipschitz continuous function $\varphi$ with $\supp \varphi\subset (c,d).$
The latter expression means that the support of $\varphi$ is included in $(c,d).$
 Integrating by parts in \eqref{chap1cseq1} and by \eqref{chap1limityk} we
obtain
\benl\label{chap1cseq2}
0=\lim_{k\rightarrow \infty} \int_c^d
\ddt\left(H_u[\lambda](t)\varphi(t)\right)
\yb_k(t)\dtt=\int_c^d
\ddt\left(H_u[\lambda](t)\varphi(t)\right)
\yb(t)\dtt.
\eenl
By Lemma \ref{chap1l1}, $\yb$ is nondecreasing over $(c,d).$ Hence, in view of Lemma \ref{chap1l3}, we can integrate by parts in the previous equation to get
\be\label{chap1cseq3}
\int_c^dH_u\lam(t)\varphi(t)\mr{d}\yb(t)=0.
\ee
Take $t_0\in(c,d).$ By the strict complementary in Assumption 1, there exists $\lambda_0\in \Lambda$ such that $H_u[\lambda_0](t_0)>0.$ Hence, in view of the continuity of $H_u[\lambda_0]$, there exists $\varepsilon>0$ such that $H_u[\lambda_0]>0$  on $(t_0-2\varepsilon,t_0+2\varepsilon)\subset (c,d).$ Choose $\varphi$ such that $\supp\varphi\subset (t_0-2\varepsilon,t_0+2\varepsilon),$ and $H_u[\lambda_0](t)\varphi(t)=1$ on $(t_0-\varepsilon,t_0+\varepsilon).$
Since $\mr{d}\yb\geq0,$ equation \eqref{chap1cseq3} yields
\benl
\label{chap1cseq4}
\begin{split}
0
&=\int_c^d H_u[\lambda](t)\varphi(t) \mr{d}\yb(t)
\geq
\int_{t_0-\varepsilon}^{t_0+\varepsilon}
H_u[\lambda](t)\varphi(t) \mr{d}\yb(t)\\
&=
\int_{t_0-\varepsilon}^{t_0+\varepsilon} \mr{d}\yb(t)
=\yb(t_0+\varepsilon)-\yb(t_0-\varepsilon).
\end{split}
\eenl
As $\varepsilon$ and $t_0\in (c,d)$ are arbitrary we find that
\be\label{chap1dybar0}
\mr{d}\yb(t)=0,\quad \mr{on}\ I_0,
\ee
and thus \eqref{chap1yconstant} holds.
Let us prove condition \eqref{chap1yi0} for $(\bar{\xi},\yb,\hb).$ Suppose that $0\in I_0.$ Take $\varepsilon>0,$ and notice that by Assumption 1 there exists $\lambda'\in\Lambda$ and $\delta>0$ such that $H_u[\lambda'](t)>\delta$ for $t\in [0,d_1-\varepsilon],$ and thus by \eqref{chap1cseq1} we obtain $
\int_0^{d_1-\varepsilon}v_k(t)/\sqrt{\gamma_k}\dtt\rightarrow 0,$ as $v_k\geq 0.$ Then for all $s\in [0,d_1),$ we have
\benl
\yb_k(s)\rightarrow 0,
\eenl
and thus
\be\label{chap1yi0sc2}
\yb=0,\ \mr{on}\ [0,d_1),\ \mr{if}\ 0\in I_0.
\ee
Suppose that $T\in I_0.$ Then, we can derive
$\int_{a_N+\varepsilon}^T\vb_k(t)\dtt\rightarrow 0$  by an analogous argument. Thus, the pointwise convergence
\benl
\hb_k-\yb_k(s)\rightarrow 0,
\eenl
holds for every $s\in (a_N,T],$
and then,
\be\label{chap1yihsc2}
\yb=\hb,\ \mr{on}\ (a_N,T],\ \mr{if}\ T\in I_0.
\ee
It remains to check the final conditions \eqref{chap1translinearcons} for $\hb.$ Let $0\leq i\leq d_{\varphi},$
\be
\label{chap1phineg'}
\begin{split}
 \varphi'_i(\xh(T))(\bar{\xi}(T)+B(T)\hb)
&= \lim_{k\rightarrow \infty}\varphi'_i(\xh(T))\left(
\frac{\xi_k(T)+B(T)h_k}{\sqrt{\gamma_k}}\right)\\
&= \lim_{k\rightarrow
\infty}\varphi'_i(\xh(T))\frac{z_k(T)}{\sqrt{\gamma_k}}.
\end{split}
\ee
A first order Taylor expansion of the function $\varphi_i$
around $\xh(T)$ gives
\benl
\varphi_i(x_{k}(T))=\varphi_i(\xh(T))+\varphi'_i(\xh(T))\delta
x_k(T)+O(|\delta x_k(T)|^2).
\eenl
By Lemmas  \ref{chap1lemmabound2} and \ref{chap1etaPont} in the
Appendix, we can write
\benl
\label{chap1taylor_phi}
\varphi_i(x_{k}(T))=\varphi_i(\xh(T))+\varphi'_i(\xh(T))z_k(T)+o(\sqrt{\gamma_k}).
\eenl
Thus
\be
\label{chap1difphi}
\varphi'_i(\xh(T))\frac{z_k(T)}{\sqrt{\gamma_k}}=\frac{\varphi_i(x_{k}(T))-\varphi_i(\xh(T))}{\sqrt{\gamma_k}}+o(1).
\ee
Since $x_k$ satisfies \eqref{chap1finalcons}, equations \eqref{chap1phineg'} and \eqref{chap1difphi} yield, for $1\leq i\leq d_{\varphi}:$
\\
$\varphi'_i(\xh(T))(\bar{\xi}(T)+B(T)\hb)\leq 0.$ For $i=0$ use inequality \eqref{chap1qgrowth}.
Analogously,
\benl\label{chap1eta0}
\eta'_j(\xh(T))(\bar{\xi}(T)+B(T)\hb)= 0,\quad \mr{for}\
j=1,\hdots,d_{\eta}.
\eenl
Thus $(\bar{\xi},\yb,\hb)$ satisfies \eqref{chap1translinearcons}, and by \eqref{chap1dybar0}, \eqref{chap1yi0sc2} and  \eqref{chap1yihsc2},
we obtain
\benl
(\bar\xi,\yb,\hb)\in \P_2.
\eenl


\noindent\textbf{(b)} Return to the expansion
\eqref{chap1taylor0}.
Equation \eqref{chap1quadlag} and $H_u\lam\geq 0$ imply
\begin{equation*}
\begin{split}
\Omega_{\P_2}[\lambda]&(\xi_k,y_k,y_k(T))=\\
&\Phi[\lambda](x_k,u_k)-\Phi[\lambda](\xh,\uh)-\intT
H_u\lam v_k\dtt-o(\gamma_k)
\leq o(\gamma_k).
\end{split}
\end{equation*}
Thus
\begin{equation}\label{chap1lims}
\liminf_{k\rightarrow \infty}
\Omega_{\P_2}[\lambda](\bar\xi_k,\bar{y}_k,\bar{h}_k) \leq
\limsup_{k\rightarrow \infty}
\Omega_{\P_2}[\lambda](\bar\xi_k,\bar{y}_k,\bar{h}_k)\leq 0.
\end{equation}
Split $\Omega_{\P_2}$ as follows,
\benl
\Omega_{\P_2,w}\lam  (\xi,y,h):=\int_0^T\{ (Q\lam\xi,\xi)+ (M\lam\xi,y)\}\dtt+g\lam(\xi(T),h),
\eenl
\benl
\Omega_{\P_2,0}\lam (y):=\int_{I_0} (R\lam y,y)\dtt,
\eenl
and
\benl
\Omega_{\P_2,+}\lam(y):=\int_{I_+}
(R\lam y,y)\dtt.
\eenl
Notice that $\Omega_{\P_2,w}\lam$ is weakly continuous in the
space $\X_2\times\Uspace_2\times \cR.$ Consider now the subspace
\benl
\label{chap1chi2}
\Gamma_2:= \left\{
(\xi,y,h)\in \X_2\times\Uspace_2\times \cR:\eqref{chap1xieq},\ \eqref{chap1yconstant}\,\mr{and}\,\eqref{chap1yi0}\ \mr{hold}
\right\}.
\eenl
Notice that $\Gamma_2$ is itself a Hilbert space.
Let $\rho>0$ be the constant in the positivity condition \eqref{chap1unifpos} and define
\benl
\Lambda^{\rho}:=\{\lambda \in
\mr{co}\,\Lambda:\Omega_{\P_2}[\lambda]-\rho \gamma
\ \mr{is}\ \mr{weakly}\ \mr{l.s.c.}\ \mr{on}\ \Gamma_2\}.
\eenl
Equation \eqref{chap1unifpos} and Lemma
\ref{chap1quadform} in the Appendix imply that
\be
\label{chap1max}
\max_{\lambda\in\Lambda^{\rho}}\Omega_{\P_2}[\lambda](\bar\xi,\yb,\hb)\geq
\rho \gamma(\yb,\hb).
\ee
Denote by $\bar{\lambda}$ the element in $\Lambda^{\rho}$
that reaches the maximum in \eqref{chap1max}. Next we show that $R[\bar{\lambda}](t) \geq \rho$ on $I_+.$

Observe that $\Omega_{\P_2,0}[\bar{\lambda}]-\rho\int_{I_0}|y(t)|^2\dtt$ is weakly continuous in the space $\Gamma_2.$ In fact, consider a sequence $\{(\tilde\xi_k,\tilde y_k,\tilde h_k)\}\subset \Gamma_2$ converging weakly to some $(\tilde\xi,\tilde{y},\tilde{h})\in\Gamma_2.$ Since $\tilde{y}_k$ and $\tilde{y}$ are constant on $I_0,$ necessarily $\tilde{y}_k\rightarrow \tilde{y}$ uniformly in every compact subset of $I_0.$ Easily follows that
\be
\lim_{k\rightarrow\infty}
\Omega_{\P_2,0}[\bar{\lambda}](\tilde y_k) -\rho\int_{I_0}|\tilde y_k(t)|^2\dtt
= \Omega_{\P_2,0}[\bar{\lambda}](\tilde{y})-\rho\int_{I_0}|\tilde y(t)|^2\dtt,
\ee
and therefore, the weak continuity of $\Omega_{\P_2,0}[\bar{\lambda}]-\rho\int_{I_0}|y(t)|^2\dtt$ in $\Gamma_2$ holds.
Since $\Omega_{\P_2}[\bar{\lambda}]-\rho\gamma$ is weakly l.s.c. in $\Gamma_2,$ we get that the (remainder) quadratic mapping
\be
\label{chap1lscmapping}
y\mapsto \Omega_{\P_2,+}[\bar{\lambda}](y)-\rho\int_{I_+}|y(t)|^2\dtt,
\ee
is weakly l.s.c. on $\Gamma_2.$ In particular, it is weakly l.s.c. in the subspace of $\Gamma_2$ consisting of the elements for which $y=0$ on $I_0.$ Hence, in view of Lemma \ref{chap1legendre} in the Appendix, we get
\be\label{Rpos}
R[\bar{\lambda}](t)\geq \rho,\quad \mr{on}\ I_+.
\ee

The following step is proving the strong convergence of $\yb_k$ to $\yb.$ With this aim we make use of the uniform convergence on compact subsets of $I_0,$ which is pointed out in Lemma \ref{chap1unifconv}.

  Recall now Assumption 2, and let $N$ be the number of connected components of $I_0.$ Set $\varepsilon>0,$ and for each composing
interval $(c,d)$ of $I_0$, consider a smaller interval of
the form $(c+\varepsilon/2N,d-\varepsilon/2N).$ Denote
 their union as $I_0^{\varepsilon}$. Notice that
$I_0\backslash I_0^{\varepsilon}$ is of measure
$\varepsilon.$ Put $I_+^{\varepsilon}:=[0,T]\backslash
I_0^{\varepsilon}.$
By the Lemma \ref{chap1Rpos} in the Appendix, $R[\bar{\lambda}](t)$ is a continuous function of time, and thus from \eqref{Rpos} we can assure that $R[\bar{\lambda}](t) \geq \rho/2$ on
$I_+^{\varepsilon}$ for $\varepsilon$ sufficiently small.
Consequently,
\benl
\Omega_{\P_2,+}^{\varepsilon}[\bar{\lambda}](y):=\int_{I_+^{\varepsilon}}
(R[\bar{\lambda}] y,y)\dtt,
\eenl
is a Legendre form on $L_2(I_+^{\varepsilon}),$ and thus the following inequality holds for the approximating directions $\yb_k,$
\be
\label{chap1Omega+}
\Omega_{\P_2,+}^{\varepsilon}[\bar{\lambda}](\yb)\leq
\liminf_{k\rightarrow \infty}
\Omega_{\P_2,+}^{\varepsilon}[\bar{\lambda}](\yb_k).
\ee
Since the sequence $\yb_k$ converges uniformly to $\yb$ on every compact subset of $I_0,$ defining
\benl
\Omega_{\P_2,0}^{\varepsilon}[\bar{\lambda}](y):=\int_{I_0^{\varepsilon}} (R[\bar{\lambda}] y,y)\dtt,
\eenl
we get
\be
\label{chap1Omega0}
\lim_{k\rightarrow\infty}
\Omega_{\P_2,0}^{\varepsilon}[\bar{\lambda}](\bar\xi_k,\yb_k,\hb_k) = \Omega_{\P_2,0}^{\varepsilon}[\bar{\lambda}](\bar\xi,\bar{y},\bar{h}).
\ee
Notice that the weak continuity of $\Omega_{\P_2,0}^{\varepsilon}[\bar{\lambda}]$ in $\Gamma_2$ cannot be applied since
\\ $(\bar\xi_k,\yb_k,\hb_k)\notin \Gamma_2.$
From positivity condition \eqref{chap1unifpos}, equations \eqref{chap1Omega+}, \eqref{chap1Omega0}, and the weak continuity of $\Omega_{\P_2,w}[\bar{\lambda}]$ (in $\X_2\times \U_2 \times \cR)$ we get
\benl
\begin{split}
\rho \gamma (\bar{y},\hb)
\leq&\,\Omega_{\P_2}[\bar{\lambda}](\bar\xi,\bar{y},\bar{h})
\leq
\lim_{k\rightarrow\infty} \Omega_{\P_2,w}[\bar{\lambda}](\bar\xi_k,\yb_k,\hb_k)
+
\lim_{k\rightarrow\infty}
\Omega_{\P_2,0}^{\varepsilon}[\bar{\lambda}](\yb_k)\\
&+
\liminf_{k\rightarrow\infty}
\Omega_{\P_2,+}^{\varepsilon}[\bar{\lambda}](\yb_k)
=\liminf_{k\rightarrow\infty}
\Omega_{\P_2}[\bar\lambda](\bar{\xi}_k,\yb_k,\hb_k).
\end{split}
\eenl
On the other hand, inequality
 \eqref{chap1lims} implies that
the right-hand side of the last expression is nonpositive.
Therefore,
\benl
(\yb,\hb)=0,\ \mr{and}\ \lim_{k\rightarrow
\infty}\Omega_{\P_2}[\bar\lambda](\bar\xi_k,\yb_k,\hb_k)=0.
\eenl
Equation \eqref{chap1Omega0} yields $\lim_{k\rightarrow\infty}
\Omega_{\P_2,0}^{\varepsilon}[\bar{\lambda}](\bar\xi_k,\yb_k,\hb_k)=0$ and
thus
\be\label{chap1limOmega+}
\lim_{k\rightarrow
\infty}\Omega_{\P_2,+}^{\varepsilon}[\bar{\lambda}](\yb_k)=0.
\ee
We have: $\Omega_{\P_2,+}^{\varepsilon}[\bar{\lambda}]$ is a
Legendre form on $L_2(I_+^\varepsilon)$ and $\yb_k\rightharpoonup 0$ on
$I_+^{\varepsilon}.$ Thus, by \eqref{chap1limOmega+},
\benl
\yb_k\rightarrow 0,\quad \mr{on}\ L_2(I_+^\varepsilon).
\eenl
As we already noticed, $\{\yb_k\}$ converges uniformly on
$I^{\varepsilon}_0,$ thus the strong convergence holds
on $[0,T].$ Therefore
\be
(\yb_k,\hb_k)\longrightarrow (0,0),\quad \mr{on}\
\Uspace_2\times \cR.
\ee
This leads to a contradiction since $(\yb_k,\hb_k)$ has unit norm
for every $k\in \cN.$ Thus, $\wh$ is a Pontryagin minimum satisfying quadratic growth.

\end{proof}


\section{Extensions and an example}

\subsection{Including parameters}

Consider the following optimal control problem where the initial state is not determined, some parameters are included and a more general control constraint is considered.
\begin{align}
 &\label{chap1cost2} J:=\varphi_0(x(0),x(T),r(0))\rightarrow \min,\\
 &\label{chap1stateeq2}\dot{x}(t)=\sum_{i=0}^m u_i(t) f_i(x(t),r(t)),\\
 &\label{chap1rdot0}\dot r(t)=0,\\
 &\label{chap1controlcons2} a_i\leq u_i(t)\leq b_i,\ \mr{for}\ \mr{a.a.}\
t\in (0,T),\ i=1,\hdots,m \\
 &\label{chap1phicons2}  \varphi_i(x(0),x(T),r(0))\leq 0,\ \mathrm{for}\
i=1,\hdots,d_{\varphi},\\
&\label{chap1etacons2}\eta_j(x(0),x(T),r(0))=0,\ \mathrm{for}\
j=1\hdots,d_{\eta},
\end{align}
where $u\in\U,$ $x\in \X,$ $r\in \cR^{n_r}$ is a parameter considered as a state variable with zero-dynamics, $a,b\in \cR^m,$ functions
$f_i:\cR^{n+n_r}\rightarrow \cR^n,$ $\varphi_i:\cR^{2n+n_r}\rightarrow \cR,$  and
$\eta:\cR^{2n+n_r}\rightarrow \cR^{d_{\eta}}$ are twice continuously differentiable.
As  $r$ has zero dynamics, the costate variable $\psi_r$ corresponding to equation \eqref{chap1rdot0} does not appear in the pre-Hamiltonian. Denote with $\psi$ the costate variable associated with \eqref{chap1stateeq2}. The pre-Hamiltonian function for problem \eqref{chap1cost2}-\eqref{chap1etacons2} is given by
\benl
H\lam(x,r,u,t)=\psi(t)\sum_{i=0}^m u_i f_i(x,r).
\eenl
Let $(\xh,\rh,\uh)$ be a feasible solution for \eqref{chap1stateeq2}-\eqref{chap1etacons2}. Since $\rh(\cdot)$ is constant, we can denote it by $\rh.$ Assume that
\benl
\varphi_i(\xh(0),\xh(T),\rh)=0,\quad \mr{for}\ i=0,\hdots,d_{\varphi}.
\eenl
An element $\lambda=(\alpha,\beta,\psi_x,\psi_r)\in \cR^{d_{\varphi}+ d_{\eta}+1}\times W_{\infty}^1(0,T;\cR^{n,*})\times W_{\infty}^1(0,T;\cR^{n_r,*})$ is a Pontryagin multiplier for $(\xh,\rh,\uh)$ if it satisfies \eqref{chap1nontriv}, \eqref{chap1alphapos}, the costate equation for $\psi$
\benl\label{chap1costateeq2}
\left\{
\begin{split}
-\dot\psi_x(t)&=H_x\lam(\xh(t),\rh,\uh(t),t),\ \mr{a.e.}\ \mr{on}\ [0,T]\\
\psi_x(0)&=-\ell_{x_0}\lam(\xh(0),\xh(T),\rh),\\ \psi_x(T)&=\ell_{x_T}\lam(\xh(0),\xh(T),\rh),
\end{split}
\right.
\eenl
and for $\psi_r$
\be\label{chap1costateeqr}
\left\{
\begin{split}
-\dot\psi_r(t)&=H_r\lam(\xh(t),\rh,\uh(t),t),\ \mr{a.e.}\ \mr{on}\ [0,T]\\
\psi_r(0)&=-\ell_{r}\lam(\xh(0),\xh(T),\rh),\,\,\psi_r(T)=0.
\end{split}
\right.
\ee
Observe that \eqref{chap1costateeqr} implies the stationarity condition
\benl
\ell_r(\xh(0),\xh(T),\rh)+\intT H_r\lam(t)\dtt=0.
\eenl
\if{
\be\label{chap1termLag2}
\ell\lam(x_0,x,r):=\sum_{i=0}^{d_{\varphi}}\alpha_i\varphi_i(x(0),x(T))+\sum_{j=1}^{d_{\eta}}\beta_j\eta_j(x(0),x(T)).
\ee
}\fi
Take $v\in \U$ and consider the linearized state equation
\be
\label{chap1lineareq2}
\left\{
\begin{split}
 \dot z(t)&=\sum_{i=0}^m \uh_i(t)[f_{i,x}(\xh(t),\rh)z(t)+f_{i,r}(\xh(t),\rh)\delta r(t)]+\sum_{i=1}^m v_i(t)f_i(\xh(t),\rh),\\
 \dot{\delta r}(t)&=0,
\end{split}
\right.
\ee
where we can see that $\delta r(\cdot)$ is constant and thus we denote it by $\delta r.$ Let the linearized initial-final constraints be
\be
\label{chap1linearcons2}
\begin{split}
 &\varphi_i'(\xh(0),\xh(T),\rh)(z(0),z(T),\delta r)\leq 0,\quad\mr{for}\ i=1,\hdots,d_{\varphi},\\
&\eta_j'(\xh(0),\xh(T),\rh)(z(0),z(T),\delta r)=0,\quad\mr{for}\  j=1,\hdots,d_{\eta}.
\end{split}
\ee
Define for each $i=1,\hdots,m$ the sets
\begin{align*}
I^i_a&:=\{t\in [0,T]: \max_{\lambda\in \Lambda}H_{u_i}\lam (t)>0\},\\
I^i_b&:=\{t\in [0,T]: \max_{\lambda\in \Lambda}H_{u_i}\lam (t)<0\},\\
I^i_{\mr{sing}}&:=[0,T]\backslash (I^i_a\cup I^i_b).
\end{align*}

\noindent\textbf{Assumption 3.}
Consider the natural extension of Assumption 2, i.e.  for each $i=1,\hdots,m,$ the sets $I^i_a$ and $I^i_b$ are  finite unions of
intervals, i.e.
\benl
I^i_a=\ds\bigcup_{j=1}^{N_a^i} I^i_{j,a},\quad
I^i_b=\ds\bigcup_{j=1}^{N_b^i} I^i_{j,b},
\eenl
for $I^i_{j,a}$ and $I^i_{j,b}$ being subintervals of $[0,T]$ of the form $[0,c),$
$(d,T];$ or $(c,d)$ if $c\neq 0$ and $d\neq T.$
Notice that $I^i_a\cap I^i_b=\emptyset.$
Call
$c_{1,a}^i<d_{1,a}^i<c_{2,a}^i<\hdots<c_{N_a^i,a}^i<d_{N_a^i,a}^i$ the
endpoints of these intervals corresponding to bound $a,$ and  define them analogously for $b.$ Consequently,
$I_{\mr{sing}}^i$ is a finite union of intervals as well.
Assume that a concatenation of a bang arc followed by another bang arc is forbidden.

\noindent\textbf{Assumption 4.}
Strict complementarity assumption for control constraints:
\benl
\left\{
\begin{split}
&I^i_a=\{t\in [0,T]:\uh_i(t)=a_i\},\ \mr{up}\ \mr{to}\ \mr{a}\ \mr{set}\ \mr{of}\ \mr{null}\ \mr{measure,}\\
&I^i_b=\{t\in [0,T]:\uh_i(t)=b_i\},\ \mr{up}\ \mr{to}\ \mr{a}\ \mr{set}\ \mr{of}\ \mr{null}\ \mr{measure.}
\end{split}
\right.
\eenl

Consider
\benl
\C_2:=
\left\{
\begin{split}
&(z,\delta r,v)\in  \X_2\times\cR^{n_r}\times\Uspace_2:\text{\eqref{chap1lineareq2}-\eqref{chap1linearcons2}}\ \mr{hold},\\
&v_i=0\ \mathrm{on}\ I^i_{\mr{a}}\cup I^i_{\mr{b}},\ \mr{for}\ i=1,\hdots,m
\end{split}
\right\}.
\eenl
The Goh transformation allows us to obtain variables $(\xi,y)$ defined by
\benl
 y(t):=\int_0^t v(s)\mr{d}s,\ \xi:=z-\sum_{i=1}^my_if_i.
\eenl
Notice that $\xi$ satisfies the equation
\be \label{chap1xieq2}
\begin{split}
\dot\xi&=A^x\xi+A^r\delta r+B_1^xy
,\\
\xi(0)&=z(0),
\end{split}
\ee
where, denoting $[f_i,f_j]^x:=f_{i,x}f_j-f_{j,x}f_i,$
\benl
A^x:=\sum_{i=0}^m \uh_if_{i,x},\quad
A^r:=\sum_{i=0}^m \uh_if_{i,r},\quad
B_1^xy:=\sum_{j=1}^m y_j\sum_{i=0}^m\uh_i[f_i,f_j]^x.
\eenl
Consider the transformed version of \eqref{chap1linearcons2},
\be\label{chap1transcons2}
\begin{split}
 &\varphi_i'(\xh(0),\xh(T),\rh)(\xi(0),\xi(T)+B(T)h,\delta r)\leq 0,\
i=1,\hdots,d_{\varphi},\\
&\eta_j'(\xh(0),\xh(T),\rh)(\xi(0),\xi(T)+B(T)h,\delta r)=0,\ j=1,\hdots,d_{\eta},
\end{split}
\ee
and let the cone $\P$ be given by
\benl
\mathcal{P}:=
\left\{
\begin{split}
&(\xi,\delta r,y,h)\in \X\times\cR^{n_r}\times\Y\times \cR^m:
y(0)=0,\ h=y(T),\\
&\eqref{chap1xieq2}\ \mr{and}\ \eqref{chap1transcons2}\ \mr{hold},
\ y_i'=0\ \mathrm{on}\
I^i_{\mr{a}}\cup I^i_{\mr{b}},\ \mr{for}\ i=1,\hdots,m
\end{split}
\right\}.
\eenl
Observe that each $(\xi,\delta r,y,h)\in \P$ satisfies
\be\label{chap1yconstant'}
y_i\ \mr{constant}\ \mr{over}\ \mr{each}\ \mr{composing}
\ \mr{interval}\ \mr{of}\ I^i_{\mr{a}}\cup I^i_{\mr{b}},
\ee
and at the endpoints,
\be\label{chap1yi0'}
\left\{
\ba{l}
y_i=0\ \mr{on}\
[0,d],\ \mr{if}\ 0\in I^i_{\mr{a}}\cup I^i_{\mr{b}},\ \mr{and},\\
y_i=h_i\ \mr{on}\
[c,T],\ \mr{if}\ T\in I^i_{\mr{a}}\cup I^i_{\mr{b}},
\ea
\right.
\ee
where $[0,d)$ is the first maximal composing interval of $I^i_{\mr{a}}\cup I^d_{\mr{b}}$ when $0\in I^d_{\mr{a}}\cup I^d_{\mr{b}},$ and $(c,T]$ is its last composing interval when $T\in I^i_{\mr{a}}\cup I^i_{\mr{b}}.$ Define
\benl
\P_2:= \left\{
\begin{split}
&(\xi,\delta r,y,h)\in \X_2\times\cR^{n_r}\times\U_2\times \cR^m:\\
&\eqref{chap1xieq2},\ \eqref{chap1transcons2},\, \eqref{chap1yconstant'}\ \mr{and}\ \eqref{chap1yi0'}\ \mr{hold}\ \mr{for}\ i=1,\hdots,m
\end{split}
\right\}.
\eenl
Recall definitions in equations \eqref{chap1QCM}, \eqref{chap1SV}, \eqref{chap1R}, \eqref{chap1g}, \eqref{chap1OmegaG}. Minor simplifications appear in the computations of these functions as the dynamics of $r$ are null and $\delta r$ is constant. We outline these calculations in an example.

Consider $M\subset \cR^s$ and the subset of $M\subset \cR^s$ defined by
\benl
G(M):=\{\lambda\in M:V_{ij}\lam=0\ \mr{on}\ I^i_{\mr{sing}}\cap
I^j_{\mr{sing}},\ \mr{for}\ \mr{every}\ \mr{pair}\ 1\mi i\neq j\leq m\}.
\eenl
Using the same techniques, we obtain the equivalent of Theorem \ref{chap1NCnew}:
\begin{corollary}
 \label{chap1NCnew2}
Suppose that $(\xh,\rh,\uh)$ is a weak minimum for problem \eqref{chap1cost2}-\eqref{chap1etacons2}. Then
\benl
\max_{\lambda\in G(\mr{co}\,\Lambda)} \Omega_{\P_2}\lam (\xi,\delta r,y,h)\geq 0,\quad \mr{for}\ \mr{all}\ (\xi,\delta r,y,h)\in \P_2.
\eenl
\end{corollary}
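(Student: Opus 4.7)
The plan is to follow the four-step strategy that established Theorem \ref{chap1NCnew}, adapting each step to accommodate (a) the two-sided bound constraint $a_i\leq u_i\leq b_i,$ (b) the free initial state $x(0),$ and (c) the parameter $r$ treated as a state with zero dynamics. Throughout, the parameter variable $\delta r\in\cR^{n_r}$ plays a role analogous to $z(0),$ both entering only through the linearized initial-final constraints \eqref{chap1linearcons2} and through the drift in \eqref{chap1xieq2}; neither is affected by the Goh transformation on the control.

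\textbf{Step 1 (analog of Theorem \ref{chap1NC2}).} First I would set up $\varepsilon$-active sets
\benl
I^i_{a,\varepsilon}:=\{t:\uh_i(t)\leq a_i+\varepsilon\},\quad
I^i_{b,\varepsilon}:=\{t:\uh_i(t)\geq b_i-\varepsilon\},
\eenl
and the corresponding $\varepsilon$-critical cones $\C_{\infty,\varepsilon}$ requiring $v_i\geq 0$ on $I^i_{a,0}$ and $v_i\leq 0$ on $I^i_{b,0}$, vanishing on the $\varepsilon$-active set. Writing the problem as in \eqref{chap1P} with the feasible set $\{a\leq u\leq b\}$ instead of $\U_+$, the nondegenerate case is handled by an auxiliary problem $(\mathrm{QP}_v)$ with two-sided constraints $a_i-\uh_i\leq r_i\leq b_i-\uh_i$ replaced by $\delta\zeta$-perturbed versions; the construction \eqref{chap1usigma} and the dominated-convergence correction to obtain $\tilde{\tilde u}(\sigma)$ go through after splitting $J^i_\sigma$ into pieces near each bound. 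Applying the Dubovitskii--Milyutin theorem as in the proof of Theorem \ref{chap1NC2} yields a Pontryagin multiplier $\lambda$ with $\mr{supp}\,\mu_{i,a}\subset I^i_{a,0}$, $\mr{supp}\,\mu_{i,b}\subset I^i_{b,0}$, such that $\Omega[\lambda](z,\delta r,v)\geq 0$. The degenerate case is again trivial via a nonzero $\beta$ annihilating $\bar\eta'(\uh)$.

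\textbf{Step 2 (Goh transformation).} With $y(t):=\int_0^t v(s)\dd s$ and $\xi:=z-By$, the computation of Theorem \ref{chap1secondvariation} adapts directly: the extra drift $A^r\delta r$ in \eqref{chap1xieq2} and the free initial value $\xi(0)=z(0)$ only contribute to the endpoint term, which becomes $g[\lambda](z(0),\xi(T),\delta r,h):=\half\ell''[\lambda](\xh(0),\xh(T),\rh)(z(0),\xi(T)+B(T)h,\delta r)^2+\half(C[\lambda](T)(2\xi(T)+B(T)h),h),$ while the integrand on $[0,T]$ remains $(Q\xi,\xi)+2(M\xi,y)+(Ry,y)+2(Vy,v)$ (with $Q,M,R,V$ formed from the appropriate $H_{xx},H_{ux}$ evaluated on $(\xh,\rh,\uh)$). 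The integration by parts is unchanged because the $\delta r$ and $z(0)$ contributions are constants/initial values that pass through the derivative-in-time arguments untouched.

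\textbf{Step 3 (analog of Theorem \ref{chap1goh}).} The key technical input, Lemma \ref{chap1approxlemma}, constructs localized oscillating perturbations $\tilde y^\nu(t)=\tilde y(\nu(t-t^*))$ with $t^*\in\mr{int}(I^i_{\mr{sing}}\cap I^j_{\mr{sing}})$ and $\tilde y$ supported away from the endpoints. Such perturbations do not alter $z(0)$ or $\delta r$ and vanish near the bang arcs; hence Hoffman's correction to restore the terminal constraints \eqref{chap1translinearcons} applies verbatim, and the limit identity
\benl
\Omega_{\P}[\lambda](\xi^\nu,\delta r,y^\nu,\dot y^\nu)\longrightarrow\mathpzc p_{\yb}[\lambda]+\mathpzc r[\lambda]
\eenl
is obtained exactly as before. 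The Minimax Theorem argument on $M=\mr{co}\,\Lambda$ then concludes $\max_{\lambda\in G(\mr{co}\,\Lambda)}\Omega_{\P}[\lambda](\xi,\delta r,y,\dot y)\geq 0$ for all $(\xi,\delta r,y,h)\in\P$.

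\textbf{Step 4 (passage to $\P_2$).} Finally, the density of $\P$ in $\P_2$ (analog of Lemma \ref{chap1DenseC2}) and the convergence of $\Omega_\P$ to $\Omega_{\P_2}$ along approximating sequences (analog of Lemma \ref{chap1limitOmega}) complete the proof. The key observation is that the definition of $\Xi[\lambda]$ uses only the bang/singular structure of each $\uh_i$ and the limit behavior of $y_j$ at the switching times; whether a bang arc is of type $a$ or $b$ is irrelevant since both force $\dot y_i=0$. The main obstacle I anticipate is purely bookkeeping: listing all switching-point cases where a singular arc for the $i$-th component meets a bang arc of type $a$ or $b$ for the $j$-th component, and verifying that the definition \eqref{chap1yi0'} of $y_i$ at the endpoints is consistent with $h=y(T)$ in all cases. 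Once this case analysis is recorded, the extension is a direct transcription of the scalar-bound proof.
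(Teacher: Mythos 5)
The paper offers no proof of this corollary, only the one-line remark that it follows ``using the same techniques'' that established Theorem \ref{chap1NCnew}; your four-step adaptation is exactly the intended argument and correctly locates every place where the two-sided bound, free initial state, and parameter $r$ enter. In particular you observe, correctly, that the Goh transformation, the localized oscillation construction of Lemma \ref{chap1approxlemma} (taking $t^*\in\mr{int}(I^i_{\mr{sing}}\cap I^j_{\mr{sing}})$), and the density passage from $\P$ to $\P_2$ are all unaffected by the addition of the finite-dimensional variables $z(0)$ and $\delta r$ and by replacing $I^i_+$ with $I^i_{\mr{sing}}=[0,T]\setminus(I^i_a\cup I^i_b)$, so the structure of the proof carries over verbatim.
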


\if{
Consider the natural extension of conditions \eqref{chap1hypsc2}:
\be\label{chap1hypsc2'}
\begin{split}
\forall\lambda\in\Lambda:
 &\ H_u\lam(0)>0,\ \mr{if}\ 0\in I_a,\quad H_u\lam(T)>0,\ \mr{if}\ T\in I_a,\\
&\ H_u\lam(0)<0,\ \mr{if}\ 0\in I_b,\quad H_u\lam(T)<0,\ \mr{if}\ T\in I_b.
\end{split}
\ee
}\fi

By a simple adaptation of the proof of Theorem \ref{chap1sc2scalar} we get the equivalent result.
\begin{corollary}\label{chap1sc2scalar2}
 Let $m=1.$ Suppose that there exists $\rho>0$ such that
\be\label{chap1unifpos'}
\max_{\lambda\in\Lambda} \Omega_{\P_2}\lam(\xi,\delta r,y,h)\geq \rho\gamma(y,h),\quad \mr{for}\ \mr{all}\ (\xi,\delta r,y,h)\in\P_2.
\ee
Then $(\xh,\rh,\uh)$ is a
 Pontryagin minimum that satisfies $\gamma-$quadratic
growth.
\end{corollary}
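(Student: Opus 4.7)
The plan is to argue by contradiction, mimicking the structure of the proof of Theorem \ref{chap1sc2scalar2} but accommodating the three new ingredients: the two-sided bound constraint $a\leq u\leq b$, the presence of an initial-state appearance in $\varphi_i,\eta_j$, and the constant parameter $\delta r$ generated by \eqref{chap1rdot0}. First I would expand the Lagrangian: introduce the terminal Lagrangian $\ell\lam(x_0,x,r):=\sum_i\alpha_i\varphi_i(x_0,x,r)+\sum_j\beta_j\eta_j(x_0,x,r)$ and use integration by parts together with \eqref{chap1costateeq2}-\eqref{chap1costateeqr} to obtain an expansion analogous to Lemma \ref{chap1expansionlagrangian}, now carrying boundary contributions at both $t=0$ and $t=T$ as well as a term $H_r\lam\,\delta r$ inside the integral that is absorbed by \eqref{chap1costateeqr}. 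This yields the parametric analog of Lemma \ref{chap1lemmasc2}, which is a routine adaptation since $\delta r$ is constant and hence behaves like a finite-dimensional perturbation throughout the third-order estimates.

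Next, suppose $\gamma$-quadratic growth fails and pick $\{v_k\}\subset\U$ converging to $0$ in the Pontryagin sense with $J(\uh+v_k)\leq J(\uh)+o(\gamma_k)$. Set $y_k(t):=\int_0^tv_k(s)\mr{d}s$, $\xi_k:=z_k-f_1(\xh,\rh)y_k$, and normalize $(\bar y_k,\bar h_k):=(y_k,y_k(T))/\sqrt{\gamma_k}$; extract a weakly convergent subsequence with limit $(\bar y,\bar h)\in\U_2\times\cR$, and let $\bar\xi$ be the corresponding solution of \eqref{chap1xieq2}. The most delicate step is to certify that $(\bar\xi,\delta r,\bar y,\bar h)\in\P_2$. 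For each composing interval of $I_a$ I use the strict complementarity Assumption 4 together with positivity of $H_u\lam\cdot(v_k-a+a)$, in the form of the stationarity inequality $\int_c^d H_u\lam\,\varphi(v_k-a_1)\mr{d}t\geq 0$ for nonnegative Lipschitz $\varphi$ supported in $(c,d)$; integrating by parts as in \eqref{chap1cseq3} forces $\mr{d}\bar y\geq 0$ there. The symmetric argument on $I_b$, using the appropriate multiplier whose $H_u\lam\mi 0$, forces $\mr{d}\bar y\leq 0$ on composing intervals of $I_b$. Combined, $\bar y$ is constant on $I_a\cup I_b$, which is exactly \eqref{chap1yconstant'}. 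The boundary conditions \eqref{chap1yi0'} are obtained by the same uniform lower bound on $|H_u\lam|$ near the endpoints, as in \eqref{chap1yi0sc2}-\eqref{chap1yihsc2}. The linearized initial-final constraints \eqref{chap1transcons2} follow by a first-order Taylor expansion of $\varphi_i,\eta_j$ at $(\xh(0),\xh(T),\rh)$ evaluated along $(x_k(0)=\xh(0), x_k(T), r_k\equiv\rh)$; note that in this formulation $z_k(0)=0$ and $\delta r=0$ for the perturbation, so the initial arguments drop out automatically and only the terminal inequalities carry information.

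With $(\bar\xi,\delta r,\bar y,\bar h)\in\P_2$ established, I invoke the positivity hypothesis \eqref{chap1unifpos'} and the analog of Lemma \ref{chap1quadform} to select $\bar\lambda\in\Lambda^\rho$ (the set of multipliers for which $\Omega_{\P_2}\lam-\rho\gamma$ is weakly lower semicontinuous on the parametric version of $\Gamma_2$) realizing the maximum; since $\delta r$ enters only through a finite-dimensional, continuous bilinear block in $\Omega_{\P_2}\lam$, its presence does not disturb the l.s.c.\ analysis. The restriction of the residual quadratic form to $y$ on $I_{\mr{sing}}$ is then the same Legendre-form argument of Lemma \ref{chap1legendre}, giving $R[\bar\lambda](t)\geq\rho$ on $I_{\mr{sing}}$. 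Splitting the singular set into a slightly smaller $I_{\mr{sing}}^{\varepsilon}$ and its complement, and using the weak continuity of $\Omega_{\P_2,w}[\bar\lambda]$ together with the uniform convergence of $\bar y_k\to\bar y$ on each compact subset of $I_a\cup I_b$ (where both are constant), I derive as in \eqref{chap1limOmega+} that $\bar y_k\to 0$ strongly in $L_2(I_{\mr{sing}}^{\varepsilon})$, while the uniform convergence on the bang part yields strong convergence there too; hence $(\bar y_k,\bar h_k)\to 0$ in $\U_2\times\cR$, contradicting $\|(\bar y_k,\bar h_k)\|=1$.

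The main obstacle is the verification that $\bar y$ is \emph{piecewise} constant on $I_a\cup I_b$ with the correct signs on each composing interval: one must select, for each maximal bang arc, a multiplier of Assumption 4 that has the correct sign of $H_u\lam$ on that arc, and then cover the arc by test functions $\varphi$ producing a one-sided variation of $\bar y$. This is slightly more intricate than the one-sided argument in the proof of Theorem \ref{chap1sc2scalar} because on a lower-bang arc the admissible variations $v_k$ satisfy $v_k\geq 0$ a.e.\ while on an upper-bang arc they satisfy $v_k\leq 0$ a.e., so the direction of monotonicity of $\bar y$ flips and must be tracked carefully. Once this piecewise monotonicity is secured, the remainder of the argument is a direct transcription of the proof of Theorem \ref{chap1sc2scalar}.
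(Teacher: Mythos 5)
Your outline correctly identifies the principal new ingredient over Theorem \ref{chap1sc2scalar}: the monotonicity of $\bar y$ flips direction between lower-bang arcs (where $v_k\geq 0$, giving $\mathrm{d}\bar y\geq 0$) and upper-bang arcs (where $v_k\leq 0$, giving $\mathrm{d}\bar y\leq 0$), so that on each composing interval of $I_a\cup I_b$ one selects a multiplier with the correct sign of $H_u\lam$ and integrates against Lipschitz test functions to conclude $\bar y$ is constant there. The parametric Taylor expansion of $\Phi$ and the Legendre-form/strong-convergence machinery then do carry over. This is the same route the paper intends when it asserts the corollary is ``a simple adaptation'' of Theorem \ref{chap1sc2scalar}.

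There is, however, a gap in the middle of your argument: you assert that ``in this formulation $z_k(0)=0$ and $\delta r=0$ for the perturbation,'' and use this to dismiss the initial-state and parameter contributions to the transversality conditions. This is not automatic. In problem \eqref{chap1cost2}--\eqref{chap1etacons2} the initial state $x(0)$ and the parameter $r$ are free decision variables, so the feasible perturbations in the quadratic-growth contradiction argument are triples $(\delta x_k(0),\delta r_k,v_k)$ with $\delta x_k(0)$ and $\delta r_k$ in general nonzero. After normalizing by $\sqrt{\gamma_k}$ you still need to extract limits $\bar\xi(0)$ and an accompanying $\delta r$ for $\delta x_k(0)/\sqrt{\gamma_k}$ and $\delta r_k/\sqrt{\gamma_k}$, and hence you must first establish that these ratios are bounded. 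The hypothesis \eqref{chap1unifpos'} lower-bounds $\Omega_{\P_2}$ only by $\rho\gamma(y,h)$ --- which does not control $\xi(0)$ or $\delta r$ --- so this boundedness is not free; it relies on an additional rigidity (normality, or the way the linearized initial--final constraints pin $(\xi(0),\delta r)$ to $(y,h)$) that your sketch does not supply. In the Markov--Dubins example this issue is invisible only because $x(0)$ is fixed (forcing $z(0)=0$) and $\alpha_0>0$ forces $\delta T=0$ on the critical cone; in general the adaptation must carry $\xi(0)$ and $\delta r$ through the weak-limit step rather than set them to zero at the outset.
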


\if{
 of matrices $Q,$ $M$ and $R$ in \eqref{chap1QCM} and \eqref{chap1R}, and of $\Omega_{\P}$ in \eqref{chap1OmegaG}. Simple computations lead us to
\be
Q\lam=
\begin{pmatrix}
 H_{xx}&H_{xr}\\
 H_{rx}&H_{rr}
\end{pmatrix},
\quad
M\lam=-\psi
\left(\frac{\partial}{\partial x}[f_1,f_0]^x,
\frac{\partial}{\partial r}[f_1,f_0]^x
\right)
\ee
\be
R\lam=\psi[f_1,[f_1,f_0]^x]^x,\quad
C\lam=\psi(f_{1,x},f_{1,r}),
\ee
\be
\begin{split}
g\lam (\zeta_0,\zeta_T,\rho, h)=&
\ \ell''\lam(\xh(0),\xh(T),\rh)(\zeta_0,\zeta_T+f_1(\xh(T))h,\rho)^2
\\
&+(C\lam(T)(2\zeta_T+f_1(\xh(T))h,h),
\end{split}
\ee
where
\be
[f_i,f_j]^x:=f_{i,x}f_j-f_{j,x}f_i.
\ee
We obtain
\be
\begin{split}
\Omega_{\P_2}\lam(\xi,\delta r,y,h)=&
g\lam(\xi(0),\xi(T),\delta r,h)\\
&+
\intT
\left[
Q\lam
\begin{pmatrix}
 \xi\\
\delta r
\end{pmatrix}
^2
+2yM\lam
\begin{pmatrix}
 \xi\\
\delta r
\end{pmatrix}
+y^2R\lam
\right]
\dtt.
\end{split}
\ee
From Theorem \ref{chap1sc2scalar} follows
\begin{corollary}
\label{chap1sc2scalar'}
Suppose that there exists $\alpha>0$ such that
\be
\max_{\lambda\in\Lambda} \Omega_{\P_2}\lam(\xi,\delta r,y,h)\geq \alpha\gamma(\xi,\delta r,y,h),\quad \forall (\xi,\delta r,y,h)\in \P_2.
\ee
Thus $\wh=(\xh,\rh,\uh)$ is a local
 Pontryagin minimum of \eqref{chap1cost2}-\eqref{chap1etacons2} that satisfies $\gamma-$quadratic
growth.
\end{corollary}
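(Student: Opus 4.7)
The plan is to adapt the proof of Theorem \ref{chap1sc2scalar}, Part 2 (the sufficiency direction) to this augmented setting. Assume by contradiction that $\gamma$-quadratic growth in the Pontryagin sense fails at $(\xh,\rh,\uh)$. Then there exists a sequence of feasible variations $(x_k,r_k,u_k)$ with $v_k:=u_k-\uh$ converging to $0$ in the Pontryagin sense, with $z_k(0)\to 0$ and $\delta r_k:=r_k-\rh\to 0$, such that $J(x_k,u_k,r_k)\le J(\xh,\uh,\rh)+o(\gamma_k)$, where $y_k(t):=\int_0^t v_k(s)\,ds$ and $\gamma_k:=\gamma(y_k,y_k(T))$. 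The first step is to extend Lemma \ref{chap1lemmasc2} to this setting: adding the terms coming from $\delta r$ (a constant) and $z_k(0)$ is straightforward because $\delta r$ is constant in time and factors cleanly out of the integration-by-parts manipulations, and the boundary term at $t=0$ in the Goh change of variables now yields an additional contribution $\ell_{x_0}z_k(0)+\ell_r\delta r_k$ which combines with the Pontryagin multiplier transversality at $t=0$ and the stationarity condition for $\psi_r$ to close the expansion
\[
\Phi[\lambda](w_k)=\Phi[\lambda](\wh)+\int_0^T H_u[\lambda]v_k\,dt+\Omega_{\P_2}[\lambda](\xi_k,\delta r_k,y_k,y_k(T))+o(\gamma_k),
\]
valid for every $\lambda\in\Lambda$.

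Next, I would normalize: set $(\bar y_k,\bar h_k,\delta\bar r_k,\bar\xi_k(0)):=(y_k,y_k(T),\delta r_k,z_k(0))/\sqrt{\gamma_k}$ and extract, by Banach--Alaoglu, weak limits $\bar y_k\rightharpoonup\bar y$ in $\U_2$ and strong limits $\bar h_k\to\bar h$, $\delta\bar r_k\to\delta\bar r$, $\bar\xi_k(0)\to\bar\xi(0)$ in finite dimension. As in part (a) of the original proof, the inequality $0\le\int_0^T H_u[\lambda]v_k\,dt\le O(\gamma_k)$ together with Assumption 4 (strict complementarity for both bounds $a$ and $b$) implies that $\bar y$ is constant on each connected component of $I_a\cup I_b$ and satisfies the endpoint conditions \eqref{chap1yi0'}; the only modification is that on an interval of $I_b$ one works with $-v_k\ge 0$ using a multiplier $\lambda$ with $H_u[\lambda]<0$. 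Passing \eqref{chap1linearcons2} to the limit via Taylor expansion of $\varphi_i,\eta_j$ at $(\xh(0),\xh(T),\rh)$ gives the transformed constraints \eqref{chap1transcons2}, so $(\bar\xi,\delta\bar r,\bar y,\bar h)\in\P_2$.

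Finally, the hypothesis \eqref{chap1unifpos'} furnishes $\bar\lambda\in\Lambda$ with $\Omega_{\P_2}[\bar\lambda](\bar\xi,\delta\bar r,\bar y,\bar h)\ge\rho\gamma(\bar y,\bar h)$. Decompose $\Omega_{\P_2}[\bar\lambda]$ into a weakly continuous part (the $g$-term and all terms bilinear in $\xi,\delta r,y$) plus the quadratic integrals of $R[\bar\lambda]y^2$ on $I_{\mr{sing}}$ and on $I_a\cup I_b$. Exactly as in the original proof, weak lower-semicontinuity together with the coercivity $\rho\gamma$ forces $R[\bar\lambda]\ge\rho$ on $I_{\mr{sing}}$ via Lemma \ref{chap1legendre}. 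Uniform convergence of $\bar y_k$ on compact subsets of $I_a\cup I_b$ (where $v_k\equiv 0$, so $\bar y_k$ is locally constant and converges pointwise) combined with the Legendre form argument on $I_{\mr{sing}}^{\varepsilon}$ yields strong $L_2$-convergence $\bar y_k\to 0$, and hence $\bar h_k\to 0$, contradicting $\|(\bar y_k,\bar h_k)\|=1$. The main obstacle is book-keeping at the two new degrees of freedom $\delta r$ and $z(0)$: one must verify that their contribution to the expansion of $\Phi[\lambda]$ is $O(\gamma_k)$ and that the weakly continuous part of $\Omega_{\P_2}[\bar\lambda]$ still controls them in the passage to the limit, which follows because they appear only through the bilinear, finite-dimensional forms in $g$ and in the $Q,M$ couplings against $\xi$, and these are continuous in the strong topology of the finite-dimensional variables that already converge strongly.
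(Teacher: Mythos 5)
Your plan follows the route the paper itself indicates: the corollary is stated as ``a simple adaptation of the proof of Theorem~\ref{chap1sc2scalar},'' and you carry out exactly that adaptation, extending the expansion of $\Phi$, normalizing, extracting weak limits, showing the limit lies in $\P_2$, and closing with the Legendre-form / weak lower-semicontinuity argument. The structure of the argument is the same.

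There is, however, one genuine gap that you acknowledge but do not close. You normalize $(\delta r_k, z_k(0))$ by $\sqrt{\gamma_k}$ and then invoke Banach--Alaoglu to extract limits, but $\gamma_k=\gamma(y_k,y_k(T))$ contains no term controlling $|z_k(0)|$ or $|\delta r_k|$, so boundedness of $z_k(0)/\sqrt{\gamma_k}$ and $\delta r_k/\sqrt{\gamma_k}$ is not automatic. This matters twice: first, the extension of Lemma~\ref{chap1lemmasc2} requires $|\delta x_k(0)|+|\delta r_k|=O(\sqrt{\gamma_k})$ so that the remainder is still $o(\gamma_k)$ (the analogue of Lemma~\ref{chap1lemmabound2}, which in the fixed-initial-state case uses $\delta x(0)=0$, now needs $|\xi(0)|^2+|\delta r|^2$ on the right-hand side); second, without boundedness the ``strong limits in finite dimension'' you claim may not exist, and $\Omega_{\P_2,w}[\bar\lambda](\bar\xi_k,\delta\bar r_k,\bar y_k,\bar h_k)$ may blow up, breaking the decomposition. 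The clean fix is to replace $\gamma(y,h)$ in the coercivity hypothesis and in the normalization by a $\gamma$ that also penalizes $|\xi(0)|^2+|\delta r|^2$ (this is what the statement you were given writes, $\gamma(\xi,\delta r,y,h)$, even though the paper's Definition~\ref{chap1defgamma} only defines the two-argument $\gamma$), or alternatively to prove, via a constraint-qualification argument for the endpoint constraints \eqref{chap1phicons2}--\eqref{chap1etacons2}, that feasibility forces $|z_k(0)|+|\delta r_k|=O(\sqrt{\gamma_k})$. As written, your argument tacitly assumes one of these, and that assumption should be made explicit.
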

}\fi

\subsection{Application to minimum-time problems}

Consider the problem
\benl
\begin{split}
& J:=T\rightarrow \min,\\
& \mr{s.t.}\ \eqref{chap1stateeq2}-\eqref{chap1etacons2}.
\end{split}
\eenl
Observe that by the change of variables:
\be\label{chap1changevar}
x(s)\leftarrow x(Ts),\quad u(s)\leftarrow u(Ts),
\ee
we can transform the problem into the following formulation.
\begin{align*}
 &J:=T(0)\rightarrow \min,\\
 &\dot{x}(s)=T(s)\sum_{i=0}^m u_i(s) f_i(x(s),r(s)),\quad \mr{a.e.}\ \mr{on}\ [0,1],\\
 &\dot r(s)=0,\quad \mr{a.e.}\ \mr{on}\ [0,1],\\
 &\dot T(s)=0,\quad \mr{a.e.}\ \mr{on}\ [0,1],\\
 & a_i\leq u_i(s)\leq b_i,\quad \mr{a.e.}\ \mr{on}\ [0,1],\ i=1,\hdots,m, \\
 &  \varphi_i(x(0),x(1),r(0))\leq 0,\ \mathrm{for}\
i=1,\hdots,d_{\varphi},\\
&\eta_j(x(0),x(T),r(0))=0,\ \mathrm{for}\
j=1\hdots,d_{\eta}.
\end{align*}
We can apply Corollaries \ref{chap1NCnew2} and \ref{chap1sc2scalar2} to the problem written in this form.
We outline the calculations in the following example.

\subsubsection{Example: Markov-Dubins problem}
Consider a problem over the interval $[0,T]$ with free final time $T:$
\be\label{chap1markov}
\begin{split}
&J:=T\rightarrow \min,\\
&\dot{x}_1=-\sin x_3,\ x_1(0)=0,\ x_1(T)=b_1,\\
&\dot{x}_2=\cos x_3,\ x_2(0)=0,\ x_2(T)=b_2,\\
&\dot{x}_3=u,\ x_3(0)=0,\ x_3(T)=\theta,\\
&-1\leq u\leq 1,
\end{split}
\ee
with $0<\theta <\pi,$ $b_1$ and $b_2$ fixed.

This problem  was originally introduced by Markov in \cite{Mar87} and studied by Dubins in \cite{Dub57}. More recently, the problem was investigated by Sussmann and Tang \cite{SusTan91}, Soueres and Laumond \cite{SouLau96}, Boscain and Piccoli \cite{BosPic}, among others.

Here we will study the optimality of the extremal
\be
\uh(t):=
\left\{
\ba{cl}
1\ &\mr{on}\ [0,\theta],\\
0\ &\mr{on}\ (\theta,\Th].
\ea
\right.
\ee
Observe that by the change of variables \eqref{chap1changevar} we can transform \eqref{chap1markov} into the following problem on the interval $[0,1].$
\be\label{chap1markovT}
\begin{split}
&J:=T(0)\rightarrow \min,\\
&\dot{x}_1(s)=-T(s)\sin x_3(s),\ x_1(0)=0,\ x_1(1)=b_1,\\
&\dot{x}_2(s)=T(s)\cos x_3(s),\ x_2(0)=0,\ x_2(1)=b_2,\\
&\dot{x}_3(s)=T(s)u(s),\ x_3(0)=0,\ x_3(1)=\theta,\\
&\dot T(s)=0,\\
&-1\leq u(s)\leq 1.
\end{split}
\ee
\if{
We could then apply Corollary \ref{chap1sc2scalar2}
to prove the optimality of $(\xh,\Th,\uh).$ Actually, in this special case, it is not necessary to use the change of variables. Fixing final time to $\Th$ and changing cost function to  $J:=1,$ we can show that the second order sufficient condition for this problem holds, and thus candidate solution $(\xh,\Th,\uh)$ is a Pontryagin minimum. In fact, as cost functional is constant, $(\xh,\Th,\uh)$ is an isolated feasible solution.
}\fi
We obtain for state variables:
\be
\label{chap1x3}
\xh_3(s)=
\left\{
\ba{cl}
\Th s\ &\mr{on}\ [0,{\theta}/\Th],\\
\theta\ &\mr{on}\ ({\theta}/\Th,1],
\ea
\right.
\ee
\benl\label{chap1x1}
\xh_1(s)=
\left\{
\ba{cl}
\cos (\Th s)-1\ &\mr{on}\ [0,{\theta}/\Th],\\
\Th\sin{\theta}({\theta}/\Th-s)+\cos{\theta}-1 \ &\mr{on}\ ({\theta}/\Th,1],
\ea
\right.
\eenl
\benl
\xh_2(s)=
\left\{
\ba{cl}
\sin {\Th s}\ &\mr{on}\ [0,{\theta}/\Th],\\
\Th\cos{{\theta}}(s-{\theta}/\Th)+\sin{\theta}\ &\mr{on}\ (\theta,\Th].
\ea
\right.
\eenl
Since the terminal values for $x_1$ and $x_2$ are fixed,
the final time $\Th$ is determined by the previous equalities.
\if{
Consider the problem in the interval $[\theta,T],$  i.e,
\be
\begin{split}
&J:=T\leftarrow \min\\
&\dot{x}_1=-\sin x_3,\ x_1(\theta)=a+\cos\theta,\ x_1(T)=0\\
&\dot{x}_2=-\cos x_3,\ x_2(\theta)=\sin\theta,\ x_2(T)=0\\
&\dot{x}_3=u,\ x_3(\theta)=\theta,\ x_3(T)=\theta\\
&-1\leq u\leq 1
\end{split}
\ee
}\fi
The pre-Hamiltonian for problem \eqref{chap1markovT}
is
\be\label{chap1Hexam}
H\lam(s):=T(s)(-\psi_1(s)\sin x_3(s)+\psi_2(s)\cos  x_3(s)+\psi_3(s) u(s)).
\ee
The final Lagrangian is
\benl
\ell:=\alpha_0 T(1)+\sum_{j=1}^3 (\beta^j x_j(0)+
\beta_jx_j(1)).
\eenl
As $\dot{\psi}_1\equiv 0,$ and $\dot{\psi}_2\equiv 0,$ we get
\benl
\psi_1\equiv \beta_1,\quad \psi_2\equiv \beta_2,\quad \mr{on}\ [0,1].
\eenl
Since the candidate control $\uh$ is singular on $[{\theta}/\Th,1],$  we have $H_u\lam\equiv 0.$
By \eqref{chap1Hexam}, we obtain
\be\label{chap1psi30}
\psi_3(s)=0,\quad \mr{on}\ [{\theta}/\Th,1].
\ee
Thus $\beta_3=0.$ In addition, as the costate equation for $\psi_3$ is
\benl
-\dot{\psi}_3=\Th(-\beta_1\cos \hat x_3-\beta_2\sin \hat x_3),
\eenl
by \eqref{chap1x3} and \eqref{chap1psi30}, we get
\be\label{chap1exeq1}
\beta_1\cos {\theta}+\beta_2 \sin {\theta}=0.
\ee
From \eqref{chap1x3} and \eqref{chap1psi30} and since $H$ is constant and equal to $-\alpha_0,$
we get
\be\label{chap1Hexam2}
H=\Th(-\beta_1\sin {\theta}+\beta_2\cos {\theta})\equiv -\alpha_0.
\ee
\begin{proposition}\label{chap1exprop} The following properties hold
\begin{itemize}
\item[(i)] $\alpha_0>0,$
\item[(ii)] $H_u\lam(s)<0$ on $[0,{\theta}/\Th)$ for all $\lambda\in\Lambda.$
\end{itemize}
\end{proposition}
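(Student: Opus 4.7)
The plan is to exploit the Pontryagin Maximum Principle relations already established in equations \eqref{chap1exeq1}--\eqref{chap1Hexam2} together with continuity of the costate $\psi_3$ at the switching instant $s=\theta/\Th$.

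For part \emph{(i)}, I would argue by contradiction: suppose $\alpha_0=0$. Then \eqref{chap1Hexam2} together with \eqref{chap1exeq1} gives the linear system
\begin{equation*}
\begin{pmatrix} \cos\theta & \sin\theta \\ -\sin\theta & \cos\theta \end{pmatrix} \begin{pmatrix} \beta_1 \\ \beta_2 \end{pmatrix} = 0,
\end{equation*}
whose determinant equals $1$, so $\beta_1=\beta_2=0$. Combined with $\beta_3=0$ (already established) and $\psi_1\equiv\beta_1$, $\psi_2\equiv\beta_2$, the costate equation $-\dot\psi_3=\Th(-\beta_1\cos\xh_3-\beta_2\sin\xh_3)$ yields $\psi_3\equiv 0$ on $[0,\theta/\Th]$ by continuity from \eqref{chap1psi30}. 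Finally the transversality conditions at $s=0$ force $\beta^j=-\psi_j(0)=0$ for $j=1,2,3$. This contradicts the nontriviality condition $|\alpha|+|\beta|=1$; hence $\alpha_0>0$.

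For part \emph{(ii)}, since $H_u[\lambda](s) = \Th\,\psi_3(s)$ and $\Th>0$, it suffices to prove $\psi_3(s)<0$ on $[0,\theta/\Th)$. On this interval $\xh_3(s)=\Th s$ and $\psi_3(\theta/\Th)=0$ by continuity with \eqref{chap1psi30}, so integrating the costate equation backwards gives
\begin{equation*}
\psi_3(s) = \int_s^{\theta/\Th} \Th\bigl(-\beta_1\cos(\Th\sigma)-\beta_2\sin(\Th\sigma)\bigr)\mr{d}\sigma = (-\beta_1\sin\theta+\beta_2\cos\theta) + \beta_1\sin(\Th s) - \beta_2\cos(\Th s).
\end{equation*}
By \eqref{chap1Hexam2} the constant term equals $-\alpha_0/\Th$. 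Solving \eqref{chap1exeq1} and \eqref{chap1Hexam2} (viewed as a linear system in $(\beta_1,\beta_2)$ with orthogonal coefficient matrix) yields $\beta_1=\alpha_0\sin\theta/\Th$ and $\beta_2=-\alpha_0\cos\theta/\Th$, and substitution produces the closed form
\begin{equation*}
\psi_3(s) = \frac{\alpha_0}{\Th}\bigl(\cos(\theta-\Th s)-1\bigr).
\end{equation*}

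The final step is to note that for $s\in[0,\theta/\Th)$ we have $\theta-\Th s\in(0,\theta]\subset(0,\pi)$ by the hypothesis $0<\theta<\pi$, so $\cos(\theta-\Th s)<1$. Together with part \emph{(i)} giving $\alpha_0>0$, this yields $\psi_3(s)<0$, hence $H_u[\lambda](s)<0$ on $[0,\theta/\Th)$ for every $\lambda\in\Lambda$. The only minor obstacle is checking that the $2\times 2$ system for $(\beta_1,\beta_2)$ is uniquely solvable (which is immediate from its unit determinant), so the proof is essentially a direct computation once the multiplier structure is untangled.
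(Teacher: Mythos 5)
Your proof is correct, and while part \emph{(i)} follows essentially the paper's route (the paper also argues by contradiction that $\alpha_0=0$ forces $\beta_1=\beta_2=0$, though it expresses the $2\times 2$ system via a case split on $\cos\theta\neq 0$ rather than by observing that the rotation matrix is invertible — your version is cleaner, and you additionally note that the initial-point multipliers $\beta^j$ also vanish via transversality, a small gap the paper glosses over), part \emph{(ii)} is genuinely different. The paper does not compute $\psi_3$ explicitly: it argues indirectly by supposing $\psi_3(s_1)=0$ for some $s_1<\theta/\Th$, invoking Rolle's theorem to produce $s_2\in(s_1,\theta/\Th)$ with $\dot\psi_3(s_2)=0$, and then combining $\beta_1\cos(\Th s_2)+\beta_2\sin(\Th s_2)=0$ with \eqref{chap1exeq1} to conclude $(\cos\theta,\sin\theta)$ and $(\cos(\Th s_2),\sin(\Th s_2))$ are parallel, which is impossible since $0<\theta-\Th s_2<\pi$. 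You instead solve the orthogonal system for $(\beta_1,\beta_2)=\bigl(\alpha_0\sin\theta/\Th,\,-\alpha_0\cos\theta/\Th\bigr)$ and integrate the costate equation backwards from $\psi_3(\theta/\Th)=0$ to obtain the closed form $\psi_3(s)=\tfrac{\alpha_0}{\Th}\bigl(\cos(\theta-\Th s)-1\bigr)$, which is manifestly negative for $\theta-\Th s\in(0,\pi)$. Your computational route is more explicit and yields strict negativity directly, whereas the paper's Rolle argument avoids solving for the multipliers at the cost of a less transparent proof; both establish the same sign condition, and both rely critically on $0<\theta<\pi$.
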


\begin{proof} \textbf{Item (i)}
Suppose that $\alpha_0=0.$ By \eqref{chap1exeq1} and \eqref{chap1Hexam2},
we obtain
\benl
\beta_1\cos\theta+\beta_2\sin\theta=0,\,\,\mr{and}\,\,
-\beta_1\sin\theta+\beta_2\cos\theta=0.
\eenl
Suppose, w.l.g., that $\cos\theta\neq 0.$
Then $\beta_1=-\beta_2\ds\frac{\sin\theta}{\cos\theta}$ and thus
\benl
\beta_2\ds\frac{\sin^2\theta}{\cos\theta}+\beta_2\cos\theta=0.
\eenl
We conclude that $\beta_2=0$ as well. This implies $(\alpha_0,\beta_1,\beta_2,\beta_3)=0,$ which contradicts the non-triviality condition \eqref{chap1nontriv}. So, $\alpha_0>0,$ as required.

\noindent\textbf{Item (ii)}
Observe that
\benl
H_{u}\lam(s)\leq 0,\quad \mr{on}\ [0,\theta/\Th),
\eenl
and $H_u\lam=\psi_3.$ Let us prove that $\psi_3$ is never 0 on $[0,\theta/\Th).$ Suppose there exists $s_1\in [0,\theta/\Th)$ such that $\psi_3(s_1)=0.$ Thus, since $\psi_3(\theta/\Th)=0$ as indicated in \eqref{chap1psi30}, there exists $s_2\in (s_1,\theta/\Th)$ such that $\dot\psi_3(s_2)=0,$ i.e.
\be\label{chap1exeq2}
\beta_1\cos (\Th s_2)+\beta_2\sin (\Th s_2)=0.
\ee
Equations \eqref{chap1exeq1} and \eqref{chap1exeq2} imply that $
\tan (\theta/\Th)=\tan (s_2/\Th).$
This contradicts $\theta<\pi.$ Thus $\psi_3(s)\neq 0$ for every $s\in [0,\theta/\Th),$ and consequently,
\benl
H_u\lam(s) <0,\quad \mr{for}\ s\in[0,\theta/\Th).
\eenl
\end{proof}

Since $\alpha_0\gr 0,$ then $\delta T=0$ for each element of the critical cone, where $\delta T$ is the linearized state variable $T.$
Observe that as $\uh=1$ on $[0,\theta/\Th],$ then
\benl
y=0\ \mr{and}\ \xi=0,\ \mr{on}\ [0,\theta/\Th],\ \mr{for}\ \mr{all}\ (\xi,\delta T,y,h)\in\P_2.
\eenl
We look for the second variation in the interval $[\theta/\Th,1].$ The Goh transformation gives
\benl
\xi_3=z_3-\Th y,
\eenl
and since $\dot{z}_3=\Th v,$ we get $z_3=\Th y$ and thus $\xi_3=0.$ Then, as $H_{ux}=0$ and $\ell''=0,$ we get
\benl
\Omega\lam=\int_{\theta/\Th}^1 (\beta_1\sin\theta-\beta_2\cos\theta)y^2dt=\alpha_0\int_{0}^1y^2dt.
\eenl
Notice that if $(\xi,\delta T,y,h)\in \P_2,$ then $h$ satisfies $\xi_3(T)+\Th h=0,$ and, as $\xi_3(T)=0,$ we get $h=0.$  Thus
\benl
\Omega\lam(\xi,y,h)=\alpha_0\int_0^Ty^2dt=\alpha_0\gamma(y,h),\quad \mr{on}\ \P_2.
\eenl
Since Assumptions 3 and 4 hold, we conclude by Corollary \ref{chap1sc2scalar2}
that $(\xh,\Th,\uh)$ is a Pontryagin minimum satisfying quadratic growth.

\section{Conclusion}

We provided a set of necessary and sufficient conditions for a bang-singular extremal. The sufficient condition is restricted to the scalar control case.
These necessary and sufficient conditions are close in the sense that, to pass from one to the other, one has to strengthen a non-negativity inequality transforming it into a coercivity condition.

This is the first time that a sufficient condition that is `almost necessary'  is established for a bang-singular extremal for the general Mayer problem. In some cases the condition can be easily checked as it can be seen in the example.

\section{Appendix}

\begin{lemma}\label{chap1PdenseHadd}
Let
\benl
X:=\{(\xi,y,h)\in \X_2\times\Uspace_2\times \cR^m:\ \eqref{chap1xieq},\text{\eqref{chap1yconstant}-\eqref{chap1yi0}}\
\mr{hold}\},
\eenl
\benl
L:=\{(\xi,y,y(T))\in \X\times\Y\times \cR^m: y(0)=0,
\ \eqref{chap1xieq}\ \mr{and}\ \eqref{chap1yconstant}\}.
\eenl
Then $L$ is a dense subset of $X$ in the $\X_2\times\Uspace_2\times \cR^m-$topology.
\end{lemma}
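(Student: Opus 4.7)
The plan is to verify the trivial inclusion $L\subset X$ and then prove density by constructing, for each $(\xi,y,h)\in X$, a sequence $(\xi^k,y^k,y^k(T))\in L$ converging to it in the $\X_2\times\U_2\times\cR^m$-topology. Because $\xi$ is the unique solution of \eqref{chap1xieq} with $\xi(0)=0$ driven by $y$, and this input-to-state map is continuous from $\U_2$ to $\X_2$ by Gronwall's inequality, the task reduces to approximating $y\in\U_2$ by a sequence $y^k\in\Y=W^1_\infty(0,T;\cR^m)$ satisfying $y^k(0)=0$, $y^k(T)=h$, and the constancy condition $\dot y^k_i\equiv 0$ on $I_0^i$ for each $i=1,\dots,m$.

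The construction is carried out component by component. Fix $i$; by Assumption 2, both $I_0^i$ and $I_+^i$ are finite unions of subintervals of $[0,T]$, arranged alternately. On every connected component of $I_0^i$, conditions \eqref{chap1yconstant} and \eqref{chap1yi0} force $y_i$ to be a prescribed constant: zero on the leftmost component if $0\in I_0^i$, $h_i$ on the rightmost one if $T\in I_0^i$, and the actual constant value of $y_i$ on every interior component. I define $y^k_i$ to equal that same constant on each such component. On every component $(c,d)$ of $I_+^i$, I use the standard density of Lipschitz functions on $[c,d]$ with prescribed endpoint values in $L_2(c,d)$ to pick $y^k_i|_{[c,d]}$ matching at $c$ and $d$ the values inherited from the neighbouring $I_0^i$-components (or the data $0$ at $c=0$, respectively $h_i$ at $d=T$, when those endpoints happen to lie in $I_+^i$) and converging to $y_i$ in $L_2(c,d)$.

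The globally defined $y^k_i$ is then Lipschitz-continuous on $[0,T]$, constant on each component of $I_0^i$, satisfies $y^k_i(0)=0$ and $y^k_i(T)=h_i$, and converges to $y_i$ in $L_2(0,T)$; collecting components, $y^k\in\Y$ converges to $y$ in $\U_2$. Letting $\xi^k$ be the solution of \eqref{chap1xieq} driven by $y^k$ gives $(\xi^k,y^k,y^k(T))\in L$; continuity of the input-to-state map yields $\xi^k\to\xi$ in $\X_2$, while $y^k(T)=h$ holds exactly for every $k$. This furnishes the required approximating sequence. The only delicate point is arranging the piecewise construction so that it glues into a globally Lipschitz function compatible with the global boundary data $y^k(0)=0$, $y^k(T)=h$; this amounts to matching one scalar at each finite interface and at the two endpoints, so it presents no real obstacle.
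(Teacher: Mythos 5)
Your argument is correct and follows essentially the same route as the paper: reduce to approximating $y$ component-by-component by a Lipschitz function that is exactly the prescribed constant on each bang component $I_0^i$ and approximates $y_i$ in $L_2$ on each singular component of $I_+^i$ with matching endpoint values, then transfer to $\xi$ via continuity of the linear input-to-state map. The paper makes the gluing near the interfaces explicit by first picking a Lipschitz $\phi$ close to $y$ in $L_2$ and then splicing in affine pieces of vanishing width, but this is precisely the construction you summon via the density of Lipschitz functions with prescribed endpoints in $L_2(c,d)$.
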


\begin{proof} (See Lemma 6 in \cite{DmiShi10}.) Let us prove the result for $m=1.$ The general case is a trivial extension.
Let  $(\bar\xi,\yb,\hb)\in X$ and $\varepsilon,\delta\gr 0.$
Consider $\phi\in\Y$ such that $\|\yb-\phi\|_2\mi \varepsilon/2.$
 In order to satisfy condition \eqref{chap1yi0} take
\benl
\left\{
\ba{ll}
y_{\delta}(t):= 0,\quad \mr{for}\ t\in[0,d_1],&\mr{if}\ c_1=0,\\
y_{\delta}(t):= h,\quad \mr{for}\ t\in[c_N,T],&\mr{if}\ d_N=T,
\ea
\right.
\eenl
where $c_j,d_j$ were introduced in Assumption 2.
Since $\yb$ is constant on each $I_j,$ define $y_{\delta}$ constant over these intervals with the same constant value as $\yb.$ It remains to define $y_{\delta}$ over $I_+.$
Over each maximal composing interval $(a,b)$ of $I_+,$ define $y_{\delta}$ as described below.
 Take $c:=\yb(a-)$ if $a\gr0,$ or $c:=0$ if $a=0;$ and let $d:=\yb(b+)$ if $b\mi T,$ or $d:=h$ when $b=T.$
Define two affine functions $\ell_{1,\delta}$ and $\ell_{2,\delta}$ satisfying
\be\label{chap1elldelta}
\begin{split}
&\ell_{1,\delta}(a)=c,\ \ell_{1,\delta}(a+\delta)=\phi(a+\delta),\\
&\ell_{2,\delta}(b)=d,\ \ell_{2,\delta}(b-\delta)=\phi(b-\delta).
\end{split}
\ee
Take
\be\label{chap1ydelta}
y_{\delta}(t):=
\left\{
\begin{split}
\ell_{1,\delta}(t),\quad &\mr{for}\ t\in [a,a+\delta],\\
\phi(t),\quad &\mr{for}\ t\in (a+\delta,b-\delta),\\
\ell_{2,\delta}(t),\quad &\mr{for}\ t\in [b-\delta,b],
\end{split}
\right.
\ee
and notice that
$
\|\phi-y_{\delta}\|_{2,[a,b]}\leq \frac{1}{k}\max{(|c|,|d|,M)},
$
where
$
M:=\sup_{t\in [a,b]}|\phi(t)|.
$
Finally, observe that $y_{\delta}(T)=h,$ and, for sufficiently small $\delta,$
\benl
\|\yb-y_{\delta}\|_2\leq \|\yb-\phi\|_2+\|\phi-y_{\delta}\|_2\mi \varepsilon.
\eenl
Thus, the result follows.
\end{proof}

\begin{lemma}
\label{chap1barell}
Let $\lambda\in\Lambda$ and $(z,v)\in\C_2.$ Then
\be\label{chap1barelleq}
\sum_{i=0}^{d_{\varphi}}\alpha_i\bar\varphi_i''(\uh)(v,v)+\sum_{i=1}^{d_{\eta}}
\beta_j\bar\eta_j''(\uh)(v,v)=\Omega\lam(z,v).
\ee
\end{lemma}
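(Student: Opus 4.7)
The identity is a purely algebraic second-order statement, best proved by matching coefficients along the admissible family of trajectories $x_\sigma$ defined by $x_\sigma(0)=\xh(0)$ and the control $\uh+\sigma v$. Writing $x_\sigma = \xh + \sigma z + \sigma^2 \zeta + o(\sigma^2)$, the first-order coefficient $z$ is precisely the solution of the linearized equation \eqref{chap1lineareq}, and matching $\sigma^2$ in the state equation shows that $\zeta$ satisfies the inhomogeneous linear ODE
\benl
\dot\zeta = A\zeta + \half\sum_{i=0}^m \uh_i f_i''(\xh)(z,z) + \sum_{i=1}^m v_i f_i'(\xh)\,z, \qquad \zeta(0) = 0,
\eenl
where $A(t):=\sum_{i=0}^m \uh_i(t)\,f_i'(\xh(t))$ is the operator appearing in \eqref{chap1lineareq}.

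A second-order Taylor expansion of the composite maps $\bar\varphi_i(\uh+\sigma v)=\varphi_i(x_\sigma(T))$ and $\bar\eta_j(\uh+\sigma v)=\eta_j(x_\sigma(T))$ around $\sigma=0$ then identifies the $\sigma^2$ coefficients as
\benl
\bar\varphi_i''(\uh)(v,v) = \varphi_i'(\xh(T))\,\zeta(T) + \half\,\varphi_i''(\xh(T))(z(T),z(T)),
\eenl
and analogously for $\bar\eta_j''(\uh)(v,v)$. Multiplying by $\alpha_i$ and $\beta_j$ and summing, the left-hand side of \eqref{chap1barelleq} becomes
\benl
\ell'\lam(\xh(T))\,\zeta(T) + \half\,\ell''\lam(\xh(T))(z(T),z(T)).
\eenl

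The key step is to rewrite $\ell'\lam(\xh(T))\zeta(T)=\psi(T)\zeta(T)$ by integration by parts, using the transversality condition $\psi(T)=\ell'\lam(\xh(T))$ and $\zeta(0)=0$:
\benl
\psi(T)\zeta(T) = \intT \bigl(\dot\psi\,\zeta + \psi\,\dot\zeta\bigr)\dtt.
\eenl
Substituting the costate equation $\dot\psi=-H_x\lam=-\psi A$ together with the ODE for $\zeta$, the $\psi A\zeta$ contributions cancel, leaving
\benl
\psi(T)\zeta(T) = \half\intT \bigl[(H_{xx}\lam z,z) + 2(H_{ux}\lam z,v)\bigr]\dtt,
\eenl
where we used $H_{xx}\lam = \psi\sum_i \uh_i f_i''(\xh)$ and $H_{u_i,x}\lam = \psi f_i'(\xh)$. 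Adding the $\half\,\ell''$ term reconstructs exactly $\Omega\lam(z,v)$ as defined, completing the proof. The only subtle point is the cancellation in the integration by parts, which exploits precisely the fact that the costate equation is the adjoint of the linearized dynamics; everything else is routine Taylor expansion. Observe, in particular, that the identity is purely algebraic and does not truly require the membership $(z,v)\in\C_2$ beyond $z$ being the linearization associated with $v$ from zero initial state.
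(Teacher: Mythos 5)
Your proof follows essentially the same route as the paper: both expand the state to second order along a control perturbation (your $\zeta$ is the paper's $\frac{1}{2}z_{vv}$), then convert the terminal term $\ell'[\lambda](\xh(T))\zeta(T)=\psi(T)\zeta(T)$ to an integral by differentiating $\psi\zeta$ and invoking the costate equation, where the $\psi A\zeta$ terms cancel — this is exactly the paper's ``Step 1'' and is the only nontrivial ingredient. Your closing remark is also correct: the identity is algebraic and needs only that $z$ solves the linearized equation from zero initial data, not the full critical cone membership, and this matches how the paper actually uses the computation.

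One small caveat about calibration: under the standard convention that $f''(x)(h,h)$ denotes the full Hessian quadratic form (so the $\sigma^2$-coefficient in the Taylor expansion is $\tfrac{1}{2}f''(x)(h,h)$), the display you write for $\bar\varphi_i''(\uh)(v,v)$ is actually the $\sigma^2$-coefficient and thus should carry an extra factor of $2$. This is not really a flaw in your argument — the paper's own derivation identifies $\Omega[\lambda](z,v)$ with the $\sigma^2$-coefficient of $\sum_i\alpha_i\bar\varphi_i + \sum_j\beta_j\bar\eta_j$ in precisely the same way, so you have inherited that normalization consistently and reach exactly the stated identity. Since the lemma is only invoked through the sign of $\Omega[\lambda](z,v)$ in the proof of Theorem~\ref{chap1NC2}, a normalization factor is harmless; but it is worth being aware that if one reads $\bar\varphi_i''$ as the Hessian form, the stated equality should be $\sum_i\alpha_i\bar\varphi_i''(\uh)(v,v)+\sum_j\beta_j\bar\eta_j''(\uh)(v,v)=2\,\Omega[\lambda](z,v)$.
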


\begin{proof}
Let us compute the left-hand side of \eqref{chap1barelleq}. Notice that
\be\label{chap1secondell}
\sum_{i=0}^{d_{\varphi}}\alpha_i\bar\varphi_i(\uh)
+\sum_{i=1}^{d_{\eta}}\beta_j\bar\eta_j(\uh)=\ell\lam(\xh(T)).
\ee
Let us look for a second order expansion for $\ell.$
Consider first a second order expansion of the state variable:
\benl
x=\xh+z+\half z_{vv}+o(\|v\|_{\infty}^2),
\eenl
where $z_{vv}$ satisfies
\be\label{chap1zvv}
\dot z_{vv}=Az_{vv}+D^2_{(x,u)^2}F(\xh,\uh)(z,v)^2,\quad z_{vv}(0)=0,
\ee
with $F(x,u):=\sum_{i=0}^mu_if_i(x).$ Consider the second order expansion for $\ell:$
\be\label{chap1ellexp}
\begin{split}
\ell\lam(x(T))
=&\ell\lam((\xh+z+\half z_{vv})(T))+o(\|v\|_1^2)\\
=&\ell\lam(\xh(T))+\ell'\lam(\xh(T))(z(T)+\half z_{vv}(T))\\
&+\half\ell''\lam(\xh(T))(z(T)+\half z_{vv}(T))^2+o(\|v\|_1^2).
\end{split}
\ee
\textbf{Step 1.} Compute
\benl
\begin{split}
\ell'&\lam(\xh(T))z_{vv}(T)
=\psi(T)z_{vv}(T)-\psi(0)z_{vv}(0)\\
&=\int_0^T [\dot\psi z_{vv}+\psi\dot z_{vv}]\dtt
=\int_0^T
\{-\psi Az_{vv}+\psi(Az_{vv}+D^2F_{(x,u)^2}(z,v)^2)\}\dtt\\
&=\int_0^T D^2H\lam(z,v)^2\dtt.
\end{split}
\eenl
\textbf{Step 2.} Compute $\ell''\lam(\xh(T))(z(T),z_{vv}(T)).$
Applying Gronwall's Lemma, we obtain $
\|z\|_{\infty}=O(\|v\|_1),$ and $\|z_{vv}\|_{\infty}=O(\|v^2\|_1).$
Thus
\benl
|(z(T),z_{vv}(T))|=O(\|v\|^3_1),
\eenl
and we conclude that
\benl
|\ell''\lam(\xh(T))(z(T),z_{vv}(T))|=O(\|v\|^3_1).
\eenl
\textbf{Step 3.} See that $\ell''\lam(\xh(T))(z_{vv}(T))^2=O(\|v\|_1^4).$
Then  by \eqref{chap1ellexp} we get,
\benl
\begin{split}
\ell\lam(x(T))=&\ell\lam(\xh(T))+\ell'\lam(\xh(T))z(T)\\
&+\half\ell''\lam(\xh(T))z^2(T)+\half\int_0^T D_{(x,u)^2}^2H\lam(z,v)^2\dtt+o(\|v\|_1^2)\\
=&\ell\lam(\xh(T))+\ell'\lam(\xh(T))z(T)+\Omega\lam(z,v)+o(\|v\|_1^2).
\end{split}
\eenl
The conclusion follows by \eqref{chap1secondell}.
\end{proof}

\begin{lemma}\label{chap1lemmabound1}
Given $(z,v)\in\W$ satisfying \eqref{chap1lineareq}, the following estimation holds for some $\rho\gr 0:$
\benl
\|z\|_2^2+|z(T)|^2\leq \rho \gamma(y,y(T)),
\eenl
where $y$ is defined by \eqref{chap1yxi}.
\begin{remark} $\rho$ depends on $\wh,$ i.e.  it does not vary with $(z,v).$
\end{remark}

\end{lemma}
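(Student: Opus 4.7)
The plan is to exploit the Goh transformation \eqref{chap1yxi} in order to rewrite $z$ as $z = \xi + By$, where $\xi$ satisfies the linear ODE \eqref{chap1xieq} driven by $y$, and then bound $\xi$ by Gronwall while treating the $By$ piece directly using the boundedness of $B$.

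Concretely, I would first set $\xi := z - By$, so that $\xi(0) = 0$ and $\dot\xi = A\xi + B_1 y$ with $B_1$ as in \eqref{chap1B1}. Since $A\in L_\infty$ and $B_1\in L_\infty$ (recall $B$ is Lipschitz and $A$ is essentially bounded, both with norms depending only on $\wh$), the integral form
\[
\xi(t) = \int_0^t \bigl(A(s)\xi(s) + B_1(s) y(s)\bigr)\,\mathrm{d}s
\]
combined with Gronwall's inequality yields $\|\xi\|_\infty \leq C_1 \|y\|_1 \leq C_1\sqrt{T}\,\|y\|_2$ for a constant $C_1$ depending only on $\|A\|_\infty$, $\|B_1\|_\infty$ and $T$. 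In particular $\|\xi\|_2^2 \leq T\,\|\xi\|_\infty^2 \leq C_2 \|y\|_2^2$.

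I would then reconstruct $z = \xi + By$ and bound in the $L_2$ norm,
\[
\|z\|_2^2 \leq 2\|\xi\|_2^2 + 2\|B\|_\infty^2\,\|y\|_2^2 \leq C_3\, \|y\|_2^2,
\]
and at the endpoint,
\[
|z(T)|^2 \leq 2|\xi(T)|^2 + 2|B(T)|^2\,|y(T)|^2 \leq 2 C_1^2 T\,\|y\|_2^2 + 2|B(T)|^2\,|y(T)|^2.
\]
Adding these two estimates and absorbing the $T$-dependent constants into a single $\rho>0$ that depends only on the reference trajectory $\wh$ (through $A$, $B$ and $T$), I obtain
\[
\|z\|_2^2 + |z(T)|^2 \leq \rho\bigl(\|y\|_2^2 + |y(T)|^2\bigr) = \rho\,\gamma(y,y(T)),
\]
which is the claim. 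There is essentially no obstacle here; the only point to watch is that the constants come from $A$, $B$ and $T$ alone and are independent of the particular pair $(z,v)$, as noted in the remark.
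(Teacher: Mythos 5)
Your proof is correct and follows essentially the same route as the paper: set $\xi := z - By$, apply Gronwall plus Cauchy--Schwarz to bound $\|\xi\|_\infty$ by $\|y\|_2$ with a constant depending only on $A$, $B$, $T$, and then combine $\|z\|_2 \leq \|\xi\|_2 + \|B\|_\infty\|y\|_2$ with the endpoint estimate. The only cosmetic difference is that you use $(a+b)^2\leq 2a^2+2b^2$ where the paper invokes $ab \leq \tfrac{a^2+b^2}{2}$.
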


\begin{proof}
Every time we mention $\rho_i$ we are referring to a constant depending on $\|A\|_{\infty},$ $\|B\|_{\infty}$ or both.
Consider $\xi,$ the solution of equation \eqref{chap1xieq} corresponding to $y.$ Gronwall's Lemma and the Cauchy-Schwartz inequality imply
\be\label{chap1lemmazxit}
\|\xi\|_{\infty}\leq \rho_1 \|y\|_2.
\ee
This last inequality, together with expression \eqref{chap1yxi}, implies
\be
\label{chap1lemmazz}
\|z\|_2\leq \|\xi\|_2+\|B\|_{\infty}\|y\|_2\leq
\rho_2 \|y\|_2.
\ee
On the other hand, equations \eqref{chap1yxi} and
\eqref{chap1lemmazxit} lead to
\benl
|z(T)|\leq |\xi(T)|+\|B\|_{\infty}|y(T)|\leq
\rho_1 \|y\|_2+\|B\|_{\infty}|y(T)|.
\eenl
Then, by the inequality ${ab}\leq \frac{a^2+b^2}{2},$ we get
\be\label{chap1lemmazzT}
|z(T)|^2\leq
\rho_3(\|y\|_2^2+|y(T)|^2).
\ee
The conclusion follows from equations \eqref{chap1lemmazz}
and \eqref{chap1lemmazzT}.
\end{proof}

The next lemma is a generalization of  the previous result to the nonlinear case. See Lemma 6.1 in Dmitruk \cite{Dmi87}.

\begin{lemma}
\label{chap1lemmabound2}
Let $w=(x,u)$ be the solution of \eqref{chap1stateeq} with $\|u\|_2\leq c$ for some constant $c.$
Put $(\delta x,v):=w-\wh.$ Then
\benl
|\delta x(T)|^{2}+\|\delta x\|_2^{2}\leq \rho
\gamma(y,y(T)),
\eenl
where $y$ is defined by \eqref{chap1yxi} and $\rho$ depends on $c.$
\end{lemma}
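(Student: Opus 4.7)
The strategy is to apply the Goh transformation at the nonlinear level and then reduce the analysis to a Gronwall estimate analogous to the one in Lemma \ref{chap1lemmabound1}. Define
\[
\hh\xi(t):=\delta x(t)-B(t)y(t),\qquad \hh\xi(0)=0,
\]
with $A,B$ given by \eqref{chap1AB}. Subtracting the state equations for $x$ and $\xh$, and writing
$f_i(x)-f_i(\xh)=f_i'(\xh)\delta x+\rho_i(\delta x)$ with $|\rho_i(\delta x)|=\O(|\delta x|^2)$, gives
\[
\dot{\delta x}=A(t)\delta x+\sum_{i=1}^m v_i f_i(\xh)+R_1(t),\qquad |R_1(t)|=\O(|v(t)||\delta x(t)|)+\O(|\delta x(t)|^2).
\]
Integration by parts of $\int_0^t Bv\,\dd s$ (as in the Goh transformation) then yields
\[
\dot{\hh\xi}=A(t)\hh\xi+B_1(t)y+R(t),
\]
where $B_1=AB-\dot B$ as in \eqref{chap1B1}, and $R$ collects the nonlinear remainders, still satisfying $|R(t)|=\O(|v||\delta x|)+\O(|\delta x|^2)$.

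The next step is an \emph{a priori} $L^\infty$ bound on $\delta x$. Since $\|u\|_2\le c$ and each $f_i$ is $C^2$, a standard Gronwall argument applied to the state equation \eqref{chap1stateeq} confines $x(t)$ to a compact set depending only on $c$, $\xh$ and $x_0$; on this set the $f_i$'s and their first and second derivatives are uniformly bounded, with a constant $K=K(c)$. This gives $\|\delta x\|_\infty\le K$, and consequently the pointwise estimate
\[
|R(t)|\le K'\bigl(|v(t)|+K\bigr)|\delta x(t)|\le K'\bigl(|v(t)|+K\bigr)\bigl(|\hh\xi(t)|+\|B\|_\infty|y(t)|\bigr),
\]
where the second inequality uses $\delta x=\hh\xi+By$. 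The key point is that the coefficient $|v(t)|+K$ lies in $L^1(0,T)$ with norm bounded by $\sqrt T\,\|v\|_2+KT\le M(c)$.

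Applying Gronwall's inequality in the $L^1$--coefficient form to
\[
|\hh\xi(t)|\le\int_0^t\bigl[\|A\|_\infty+K'(|v|+K)\bigr]|\hh\xi|\,\dd s+\int_0^t\bigl[\|B_1\|_\infty+K'\|B\|_\infty(|v|+K)\bigr]|y|\,\dd s,
\]
and using Cauchy--Schwarz on the inner product $\int(|v|+K)|y|\,\dd s\le(\|v\|_2+K\sqrt T)\|y\|_2$, I obtain
\[
\|\hh\xi\|_\infty\le C(c)\,\|y\|_2.
\]
Finally, since $\delta x=\hh\xi+By$, one has $\|\delta x\|_2\le\|\hh\xi\|_2+\|B\|_\infty\|y\|_2\le C'(c)\|y\|_2$ and, by the inequality $ab\le\tfrac12(a^2+b^2)$,
\[
|\delta x(T)|^2\le 2|\hh\xi(T)|^2+2|B(T)|^2|y(T)|^2\le C''(c)\bigl(\|y\|_2^2+|y(T)|^2\bigr)=C''(c)\gamma(y,y(T)),
\]
which yields the claim with $\rho=\rho(c)$.

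The main obstacle is the potential circularity in Step 3: the remainder $R$ depends on $\delta x$ itself, so one cannot directly apply Gronwall to $y$ alone. This is resolved by first establishing the uniform bound $\|\delta x\|_\infty\le K(c)$ in Step 2, which turns the quadratic term $\O(|\delta x|^2)$ into a term linear in $|\delta x|$ with a controlled coefficient, and by observing that the resulting Gronwall coefficient $\|A\|_\infty+K'(|v|+K)$ is integrable uniformly in $u$ with $\|u\|_2\le c$, which is exactly the regularity needed to close the estimate.
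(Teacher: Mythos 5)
Your proposal follows essentially the same route as the paper's own (commented-out) argument and the cited Lemma 6.1 of Dmitruk: define the nonlinear Goh variable $\hh\xi:=\delta x-By$, derive $\dot{\hh\xi}=A\hh\xi+B_1y+R$ with a remainder controlled by $|v|\,|\delta x|$ and $|\delta x|^2$, and close via Gronwall together with Cauchy--Schwarz using the $L^1$-integrability of the coefficient $1+|u|$ under $\|u\|_2\le c$. The only difference is that you insert an explicit a priori $L^\infty$ bound on $\delta x$ to turn the quadratic remainder into a linear one, whereas the paper invokes Lipschitz constants of the $C^2$ data directly; both rest on the same implicit compactness of the reachable set, so the proofs are equivalent.
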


\if{
\begin{proof}
Consider
\be\label{chap1l2xi}
\xi:=\delta x-B y.
\ee
Differentiating with respect to the time variable follows
\be\label{chap1l2xidot}
\begin{split}
 \dot{\xi}
=\sum_{i=0}^m u_i f_i(x)-B \uh
-\dot By-Bv.
\end{split}
\ee
Take $L$ the maximum of the Lipschitz constants of all $f_i,$ with $i=1,\hdots,m,$ and obtain from \eqref{chap1l2xidot}:
\benl
|\dot{\xi}(t)|
\leq
L|\delta x(t)|(1+|u(t)|)+\|\dot{B}\|_{\infty}|y(t)|.
\eenl
Replacing $\delta x$ by its expression in \eqref{chap1l2xi} obtain
\benl
|\dot{\xi}(t)|
\leq
L(1+|u(t)|)|\xi(t)|+(L\|B\|_{\infty}(1+|u(t)|)+\|\dot{B}\|_{\infty})|y(t)|
\eenl
Applying Gronwall's Lemma and Cauchy-Schwartz inequality to the last expression follows
\be \label{chap1l2xit}
|\xi(t)|\leq \alpha_1(L,\|B\|_{\infty},\|\dot
B\|_{\infty},\|u\|_2)\left(\int_0^t
|y(s)|^2\mr{d}s\right)^{1/2},
\ee
In view of \eqref{chap1l2xi}, $\|\delta x\|_2\leq \|\xi\|_2+\|B\|_{\infty}\|y\|_2.$ Conclude by \eqref{chap1l2xit} and this last inequality.
\end{proof}
}\fi

\begin{lemma}\label{chap1l1}
Let $\{y_k\}\subset L_2(a,b)$ be a sequence of continuous
non-decreasing functions that converges weakly to $y\in L_2(a,b).$
Then $y$ is non-decreasing.
\end{lemma}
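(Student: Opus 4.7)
The plan is to test weak convergence against carefully chosen indicator-like functions, then pass to the limit using the Lebesgue differentiation theorem.

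First I would fix two Lebesgue points $s<t$ of $y$ in $(a,b)$ and pick $\varepsilon>0$ small enough that $s+\varepsilon<t$ and $t+\varepsilon<b$. Define the test function
\[
\varphi_\varepsilon:=\frac{1}{\varepsilon}\bigl(\chi_{[t,t+\varepsilon]}-\chi_{[s,s+\varepsilon]}\bigr)\in L_2(a,b).
\]
Because each $y_k$ is non-decreasing and the intervals $[s,s+\varepsilon]$ and $[t,t+\varepsilon]$ are disjoint with the first lying to the left of the second, for any $u\in[s,s+\varepsilon]$ and $v\in[t,t+\varepsilon]$ we have $y_k(u)\le y_k(v)$; integrating yields
\[
\int_a^b y_k(\tau)\,\varphi_\varepsilon(\tau)\,{\rm d}\tau=\frac{1}{\varepsilon}\int_t^{t+\varepsilon}y_k-\frac{1}{\varepsilon}\int_s^{s+\varepsilon}y_k\geq 0.
\]

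Next I would use weak convergence $y_k\rightharpoonup y$ in $L_2(a,b)$ against the fixed test function $\varphi_\varepsilon$ to pass to the limit, obtaining
\[
\frac{1}{\varepsilon}\int_t^{t+\varepsilon}y(\tau)\,{\rm d}\tau\;\geq\;\frac{1}{\varepsilon}\int_s^{s+\varepsilon}y(\tau)\,{\rm d}\tau.
\]
Since $s$ and $t$ were chosen to be Lebesgue points of $y$, letting $\varepsilon\to 0^+$ and invoking the Lebesgue differentiation theorem gives $y(t)\ge y(s)$.

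Almost every point of $(a,b)$ is a Lebesgue point of $y$, so the inequality $y(s)\le y(t)$ holds for all $s<t$ in a set of full measure. Hence $y$ admits a non-decreasing representative, which is the intended meaning of ``non-decreasing'' for an $L_2$-class. The only mildly delicate point is the clean choice of test function: a more naive choice (e.g. plain $\chi_{[s,t]}$) would not directly compare $y$ at two points after weakly testing, which is why the difference of two averaging kernels is the right object. No further obstacle is expected.
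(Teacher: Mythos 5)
Your proof is correct and follows essentially the same route as the paper's: use monotonicity of $y_k$ to get an integral inequality over two small intervals, pass to the limit via weak convergence (testing against characteristic functions is exactly what the paper does implicitly), then invoke the Lebesgue differentiation theorem at Lebesgue points. The only cosmetic difference is that the paper uses symmetric intervals $[s-\varepsilon,s+\varepsilon]$ and $[t-\varepsilon,t+\varepsilon]$ while you use one-sided ones; this changes nothing.
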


\begin{proof}
Let $s,t\in (a,b)$ be such that $s<t,$ and $\varepsilon_0>0$
such that $s+\varepsilon_0<t-\varepsilon_0.$ For every $k\in \cN,$ and every
$0<\varepsilon<\varepsilon_0,$ the following inequality holds
\benl
\int_{s-\varepsilon}^{s+\varepsilon}
y_k(\nu)d\nu\leq \int_{t-\varepsilon}^{t+\varepsilon}
y_k(\nu)d\nu.
\eenl
Taking the limit as $k$ goes to infinity and
multiplying by $\frac{1}{2\varepsilon},$ we deduce that
\benl
\frac{1}{2\varepsilon}\int_{s-\varepsilon}^{s+\varepsilon}
y(\nu)d\nu\leq\frac{1}{2\varepsilon}\int_{t-\varepsilon}^{t+\varepsilon}
y(\nu)d\nu.
\eenl
As $(a,b)$ is a finite measure space, $y$ is a function of
$L_1(a,b)$ and almost all points in $(a,b)$
are Lebesgue points (see Rudin \cite[Theorem 7.7]{Rud87}). Thus, by taking $\varepsilon$ to 0, it follows from the previous inequality that
\benl
y(s)\leq y(t),
\eenl
which is what we wanted to prove.
\end{proof}

\begin{lemma}\label{chap1unifconv}
Consider a sequence $\{y_k\}$ of non-decreasing continuous functions in a
compact real interval $I$ and assume that $\{y_k\}$ converges
weakly to 0 in $L_2(I).$ Then it converges uniformly to 0 on any
interval $(a,b)\subset I.$
\end{lemma}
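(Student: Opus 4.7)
The plan is to prove pointwise convergence to zero at interior points of $I$ first, and then exploit monotonicity together with a sandwich argument to upgrade pointwise convergence into uniform convergence on any compactly contained subinterval.

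First I would extract pointwise convergence from the weak $L_2$ hypothesis. Fix a point $t$ in the interior of $I,$ and choose $\varepsilon>0$ small enough that $[t-\varepsilon,t+\varepsilon]\subset I.$ Since each $y_k$ is non-decreasing, for every $s\in[t-\varepsilon,t]$ one has $y_k(s)\leq y_k(t),$ and for every $s\in[t,t+\varepsilon]$ one has $y_k(s)\geq y_k(t).$ Integrating yields the sandwich
\benl
\frac{1}{\varepsilon}\int_{t-\varepsilon}^{t}y_k(s)\,\mr{d}s\ \leq\ y_k(t)\ \leq\ \frac{1}{\varepsilon}\int_{t}^{t+\varepsilon}y_k(s)\,\mr{d}s.
\eenl
Because the indicator functions $\chi_{[t-\varepsilon,t]}$ and $\chi_{[t,t+\varepsilon]}$ belong to $L_2(I),$ the weak convergence $y_k\rightharpoonup 0$ forces both outer terms to tend to $0$ as $k\to\infty,$ and hence $y_k(t)\to 0.$ This is the same Lebesgue-point-style maneuver already used in Lemma \ref{chap1l1}, simply applied at a single point rather than between two points.

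Next I would promote this pointwise statement to uniform convergence on $(a,b).$ Since $(a,b)\subset I$ and $I$ is compact, we may select $a_1,b_1\in I$ with $a_1<a<b<b_1$ (this is the mild interpretation of ``$(a,b)\subset I$'' that matches the usage of the lemma in the proof of Theorem \ref{chap1sc2scalar}, where the compact subsets $I_0^\varepsilon$ are built strictly inside $I_0$). Monotonicity of each $y_k$ then gives, for every $t\in(a,b),$
\benl
y_k(a_1)\ \leq\ y_k(t)\ \leq\ y_k(b_1),
\eenl
so that
\benl
\sup_{t\in(a,b)}|y_k(t)|\ \leq\ \max\bigl(|y_k(a_1)|,|y_k(b_1)|\bigr).
\eenl
By the pointwise step applied at $a_1$ and $b_1,$ the right-hand side tends to $0,$ which delivers uniform convergence.

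The main obstacle is conceptual rather than computational: one has to extract a pointwise bound from a purely weak hypothesis, and the only lever available is monotonicity. The whole proof hinges on recognizing that monotonicity converts integral averages (which weak convergence controls) into two-sided pointwise bounds at every interior point, and that the very same monotonicity then transfers convergence at two carefully placed points into uniform convergence on the whole interval in between.
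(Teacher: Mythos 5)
Your proof is correct and uses the same core mechanism as the paper (monotonicity converts integral averages, controlled by weak convergence, into pointwise bounds), but you make two genuine improvements in exposition. For the pointwise step, the paper argues by contradiction using a single one-sided bound $\varepsilon(b-c) \leq \int_c^b y_{k_j}$, whereas you give a direct two-sided sandwich with a shrinking window around $t$; both are correct and morally the same. More importantly, for the uniform step the paper only asserts that it is ``a direct consequence of monotonicity,'' which as literally stated is not quite right: with $I=[0,b]$, the functions $y_k(t)=\max\bigl(0,k(t-b+1/k)\bigr)$ are continuous, nondecreasing, converge to $0$ in $L_2$ (hence weakly) and pointwise on $(0,b)$, yet $\sup_{(0,b)}y_k=1$ for all $k$. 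So the conclusion requires exactly the compact-containment reading you adopt, and your explicit sandwich $y_k(a_1)\leq y_k(t)\leq y_k(b_1)$ for $t\in(a,b)$ with $a_1<a<b<b_1$ interior to $I$ is the right way to close the gap; it also matches how the lemma is actually invoked in the proof of Theorem \ref{chap1sc2scalar}, where the intervals $I_0^{\varepsilon}$ are built strictly inside the components of $I_0.$
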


\begin{proof}
Take an arbitrary interval $(a,b)\subset I.$ First
prove the pointwise convergence of $\{y_k\}$ to 0. On
the contrary,  suppose that there exists
$c\in (a,b)$ such that $\{y_k(c)\}$ does not converge to 0. Thus
there exist $\varepsilon>0$ and a subsequence $\{y_{k_j}\}$
such that $y_{k_j}(c)>\varepsilon$ for each $j\in\cN,$ or
$y_{k_j}(c)<-\varepsilon$ for each $j\in\cN.$ Suppose, without loss of generality, that the first statement is true. Thus
\be
\label{chap1inequnifconv}
0< \varepsilon
(b-c)<y_{k_j}(c)(b-c)\leq\int_c^{b}y_{k_j}(t)\dtt,
\ee
where the last inequality holds since $y_{k_j}$ is
nondecreasing. But the right-hand side of \eqref{chap1inequnifconv} goes to 0 as $j$ goes to infinity. This contradicts the hypothesis and thus the pointwise
convergence of $\{y_k\}$ to 0 follows.
The uniform convergence is a direct consequence of the
monotonicity of the functions $y_k.$
\end{proof}

\begin{lemma}\cite[Theorem 22, Page 154 - Volume I]{DunSch58}
\label{chap1l3}
Let $a$ and $b$ be two functions of bounded variation in
$[0,T].$ Suppose that one is continuous and the other is
right-continuous. Then
\benl
\int_0^Ta(t)db(t)+\int_0^Tb(t)da(t)=\ds [ab]_{0-}^{T+}.
\eenl
\end{lemma}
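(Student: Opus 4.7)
My plan is to prove this as a classical Lebesgue--Stieltjes integration by parts identity, relying on the regularity hypothesis (one continuous, one right-continuous) to guarantee that no atoms of $da$ coincide with atoms of $db$ inside $(0,T)$ so that the boundary term $[ab]_{0-}^{T+}$ carries all the mass difference.

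First, without loss of generality, assume $a$ is continuous and $b$ is right-continuous, both of bounded variation on $[0,T]$. Extend them outside $[0,T]$ by setting $a(t)=a(0)$, $b(t)=b(0)$ for $t<0$ and $a(t)=a(T)$, $b(t)=b(T)$ for $t>T$; the one-sided limits $a(0-), b(0-), a(T+), b(T+)$ then agree with the corresponding values at the endpoints, except that $b$ may carry jumps at $0$ and $T$ that are captured by the $0-$ and $T+$ conventions. The associated signed Borel measures $da$ and $db$ are finite on $[0,T]$, and the hypothesis that $a$ is continuous means $da$ has no atoms; consequently $a$ and $b$ share no common jump.

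Second, the core step is a Fubini argument on the product measure $da \otimes db$ over the square $[0-,T+]^2$. Partition the square by the diagonal: let $\Delta:=\{(s,t): s\leq t\}$ and $\Delta':=\{(s,t): s>t\}$. Since $a$ is continuous, the diagonal has zero $da\otimes db$--measure. Fubini then yields
\[
(ab)(T+) - (ab)(0-) \;=\; \int_{[0-,T+]^2} d(ab) \;=\; \int_0^T a(t)\,db(t) + \int_0^T b(t-)\,da(t),
\]
where the first integral is obtained by integrating $da$ first on $\Delta$ and the second by integrating $db$ first on $\Delta'$, together with the product-measure identity for $d(ab)=a\,db+b_-\,da$ on a rectangle (Vol'pert's formula). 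Because $a$ is continuous, $b(t-)\,da(t) = b(t)\,da(t)$ as measures (the set of jumps of $b$ is $da$--null), and this gives the desired identity.

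The alternative approach I would fall back on, if one wishes to avoid product measures, is the Riemann--Stieltjes route. Given a partition $0=t_0<t_1<\cdots<t_n=T$, one verifies by direct expansion the telescoping identity
\[
\sum_{i=0}^{n-1} a(t_i)\bigl(b(t_{i+1})-b(t_i)\bigr) + \sum_{i=0}^{n-1} b(t_{i+1})\bigl(a(t_{i+1})-a(t_i)\bigr) \;=\; a(t_n)b(t_n) - a(t_0)b(t_0).
\]
Refining the partition, the continuity of $a$ guarantees that the first sum converges to $\int_0^T a\,db$ (as $a$ admits no jump to clash with those of $b$), while the right-continuity of $b$ makes the tag $b(t_{i+1})$ converge to $\int_0^T b\,da$. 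The boundary values pick up $[ab]_{0-}^{T+}$ once we account for possible jumps of $b$ at $0$ and $T$ (whence the $0-$ and $T+$ convention).

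The main obstacle is the endpoint convention: one must be careful that the integrals $\int_0^T$ are interpreted over the closed interval $[0,T]$ so as to include the atoms of $db$ at the endpoints, which is precisely what the notation $[ab]_{0-}^{T+}$ encodes. Once this bookkeeping is set, the rest is bookkeeping of measures and a standard limit argument.
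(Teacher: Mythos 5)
The paper does not prove this lemma at all: it is quoted verbatim from Dunford--Schwartz, with only the citation \cite[Theorem 22, p.\ 154]{DunSch58}, so there is no ``paper's proof'' to compare your attempt against. What you have written is a from-scratch reconstruction, and the two routes you sketch (Fubini on the square; Riemann--Stieltjes telescoping) are indeed the standard ones for Lebesgue--Stieltjes integration by parts, with the hypothesis that one function is continuous serving exactly to make the diagonal $da\otimes db$-null (resp.\ to make the choice of tags irrelevant).

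However, your central chain of equalities in the Fubini step is not correct as written and glosses over the nontrivial algebra. What Fubini over the square gives you after splitting along $\Delta$ and $\Delta'$ is the \emph{product of increments}
\[
\bigl(a(T+)-a(0-)\bigr)\bigl(b(T+)-b(0-)\bigr)
=\int_0^T\bigl(a(t)-a(0-)\bigr)\,db(t)+\int_0^T\bigl(b(s-)-b(0-)\bigr)\,da(s),
\]
whereas the right-hand side of the lemma is the \emph{increment of the product}, $a(T+)b(T+)-a(0-)b(0-)$. These two are not equal; they differ by the cross-terms $a(0-)\bigl(b(T+)-b(0-)\bigr)+b(0-)\bigl(a(T+)-a(0-)\bigr)$, and the identity only drops out once you expand both sides and match. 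Your displayed line writes $(ab)(T+)-(ab)(0-)=\int_{[0-,T+]^2} d(ab)$, which conflates a measure on $[0,T]$ with an integral over the square, and then invokes Vol'pert's formula $d(ab)=a\,db+b_-\,da$---but that formula (in the absence of the covariation term because $a$ is continuous) is essentially equivalent to the result you are trying to prove, so as phrased the argument is circular. The fix is simply to carry out the expansion of $(a(T+)-a(0-))(b(T+)-b(0-))$ and cancel against the constant-offset terms in the two iterated integrals; once that bookkeeping is done the Fubini route is complete, and the continuity of $a$ is used precisely to replace $b(t-)\,da$ by $b(t)\,da$ and to kill the diagonal. The Riemann--Stieltjes fallback suffers a milder version of the same problem: the telescoping sum gives $a(T)b(T)-a(0)b(0)$, and turning that into $[ab]_{0-}^{T+}$ while making the sums converge to integrals over the \emph{closed} interval requires keeping track of the endpoint atoms of $db$, which you acknowledge but do not carry out.
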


\begin{lemma}\label{chap1Rpos}
Let $m=1, $ i.e.  consider a scalar control variable. Then, for any $\lambda\in \Lambda,$ the function $R\lam(t)$ defined in \eqref{chap1R} is continuous in $t.$
\end{lemma}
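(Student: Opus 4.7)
The plan is to show that although $R[\lambda](t)$ a priori involves the possibly discontinuous control $\uh(t)$, the $\uh$-dependent contributions in the four terms defining $R[\lambda]$ cancel, leaving a function of $t$ that depends only on $\xh(t)$ and $\psi(t)$, both of which are Lipschitz. First I would identify the pieces of $R[\lambda] = B^\top Q[\lambda] B - C[\lambda]B_1 - (C[\lambda]B_1)^\top - \dot S[\lambda]$ that are immediately Lipschitz. Since $\xh$ and $\psi$ are Lipschitz and the $f_i$ are $C^2$, both $B(t) = f_1(\xh(t))$ and $C[\lambda](t) = \psi(t) f_1'(\xh(t))$ are Lipschitz in $t$.

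Next I would show that $B_1$ itself is Lipschitz in the scalar case. By \eqref{chap1B1} and \eqref{chap1AB}, using the product rule for $\dot B = f_1'(\xh)\dot{\xh}$,
$$B_1 = AB - \dot B = \sum_{i=0}^{1}\uh_i\bigl(f_i'(\xh) f_1(\xh) - f_1'(\xh) f_i(\xh)\bigr) = \sum_{i=0}^{1}\uh_i [f_i, f_1](\xh).$$
Since $[f_1, f_1] = 0$, the coefficient of $\uh_1$ vanishes and $B_1 = [f_0, f_1](\xh)$ depends only on $\xh$, and is therefore Lipschitz. Consequently $C[\lambda]B_1 + (C[\lambda]B_1)^\top$ is Lipschitz in $t$.

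The crucial step is to verify that the remaining combination $B^\top Q[\lambda] B - \dot S[\lambda]$ is continuous, which is where the $\uh$-dependence really has to cancel. For this I would compute both pieces explicitly. On the one hand,
$$B^\top Q[\lambda]B = \sum_{i=0}^{1} \uh_i\,\psi\, f_i''(\xh)\bigl(f_1(\xh), f_1(\xh)\bigr).$$
On the other, writing $S[\lambda] = \psi\, G(\xh)$ with $G(x) := f_1'(x) f_1(x)$, and using $\dot\psi = -\psi\sum_i \uh_i f_i'(\xh)$, $\dot{\xh} = \sum_i \uh_i f_i(\xh)$, together with $G'(\xh) f_i(\xh) = f_1''(\xh)(f_1(\xh), f_i(\xh)) + f_1'(\xh) f_1'(\xh) f_i(\xh)$, one gets
$$\dot S[\lambda] = \sum_{i=0}^{1} \uh_i\,\psi\bigl[-f_i'(\xh) f_1'(\xh) f_1(\xh) + f_1''(\xh)(f_1(\xh), f_i(\xh)) + f_1'(\xh) f_1'(\xh) f_i(\xh)\bigr].$$
Subtracting, the coefficient of $\uh_i$ in $B^\top Q[\lambda]B - \dot S[\lambda]$ is
$$\psi\bigl[f_i''(\xh)(f_1,f_1) + f_i'(\xh)f_1'(\xh)f_1 - f_1''(\xh)(f_1, f_i) - f_1'(\xh)f_1'(\xh) f_i\bigr],$$
which is identically zero for $i=1$ by the symmetry of $f_1''$ in its two vector arguments and termwise cancellation. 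Only the $i=0$ contribution survives (with $\uh_0 \equiv 1$), and since it depends only on $\xh(t)$ and $\psi(t)$ it is Lipschitz.

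Combining these observations, $R[\lambda](t)$ is a sum of three Lipschitz continuous functions of $t$ and is in particular continuous. The main obstacle is the bookkeeping in the third step: one must correctly differentiate the vector-valued composition $f_1'(\xh(t)) f_1(\xh(t))$ in $t$, which produces both a Hessian-contraction term $f_1''(\xh)(f_1, f_i)$ and a Jacobian-squared term $f_1' f_1' f_i$, and then verify by inspection that every monomial in the $\uh_1$ coefficient cancels. Once this symbolic identity is verified, no further estimates are needed.
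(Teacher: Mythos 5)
Your proof is correct and follows essentially the same approach as the paper: a direct computation showing that the $\uh$-dependent terms in $R[\lambda]$ cancel, leaving a function built only from $\psi(t)$, $\xh(t)$, and the $C^2$ vector fields $f_0,f_1$. The paper pushes the algebra one step further and identifies the result in closed form as the iterated Lie bracket $R[\lambda]=\psi[f_1,[f_1,f_0]]$, but that extra identity is not needed for continuity; your term-by-term cancellation (together with the observation that $B_1=[f_0,f_1]$ is $\uh$-independent) suffices.
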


\begin{proof}
Consider definition \eqref{chap1SV}. Condition $V\lam\equiv 0$ yields $S\lam=C\lam B,$ and since $R\lam$ is scalar, we can write
\benl
R\lam=B^\top Q\lam B-2C\lam B_1-\dot C\lam B-C\lam\dot B.
\eenl
Note that $
B=f_1,$ $B_1=[f_0,f_1],$ $C\lam=-\psi f_1',$ and $Q\lam=-\psi(f_0''+\uh f_1''.$
Thus
\begin{align*}
R\lam
=&\psi(f_0''+\uh f_1'')(f_1,f_1)-2\psi
f_1'(f_0'f_1-f_1'f_0)\\
&+\psi(f_0'+\uh f_1')f_1'f_1-\psi f_1''(f_0+\uh
f_1)f_1-\psi f_1'f_1'(f_0+\uh f_1)\\
=&\psi[f_1,[f_1,f_0]].
\end{align*}
Since $f_0$ and $f_1$ are twice continuously differentiable, we conclude that $R\lam$ is continuous in time.
\end{proof}

\begin{lemma}\label{chap1legendre0}\cite{Hes51}
Consider a quadratic form $\Q=\Q_1+\Q_2$ where $\Q_1$ is a
Legendre form and $\Q_2$ is weakly continuous over some Hilbert space. Then $\Q$ is
a Legendre form.
\end{lemma}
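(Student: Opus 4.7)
The proof will be a straightforward unwinding of the two defining properties of a Legendre form, so I do not expect any serious obstacle; the only care needed is to have the correct definition at hand. Recall that a quadratic form $\mathcal{Q}$ on a Hilbert space $H$ is called a \emph{Legendre form} provided that (i) $\mathcal{Q}$ is weakly lower semicontinuous on $H$, and (ii) whenever $x_n \rightharpoonup x$ weakly in $H$ and $\mathcal{Q}(x_n)\to \mathcal{Q}(x)$, the sequence $\{x_n\}$ converges to $x$ strongly in $H$. I would verify properties (i) and (ii) for $\mathcal{Q}=\mathcal{Q}_1+\mathcal{Q}_2$ in turn.

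For (i), observe that by hypothesis $\mathcal{Q}_1$ is weakly lower semicontinuous (as part of it being a Legendre form), and $\mathcal{Q}_2$ is weakly continuous, hence a fortiori weakly lower semicontinuous. The sum of two weakly lower semicontinuous functionals is weakly lower semicontinuous, so $\mathcal{Q}$ satisfies (i).

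For (ii), take any sequence $\{x_n\}\subset H$ with $x_n\rightharpoonup x$ and $\mathcal{Q}(x_n)\to \mathcal{Q}(x)$. Since $\mathcal{Q}_2$ is weakly continuous, $\mathcal{Q}_2(x_n)\to \mathcal{Q}_2(x)$, and therefore
\[
\mathcal{Q}_1(x_n)=\mathcal{Q}(x_n)-\mathcal{Q}_2(x_n)\;\longrightarrow\;\mathcal{Q}(x)-\mathcal{Q}_2(x)=\mathcal{Q}_1(x).
\]
Now $x_n\rightharpoonup x$ together with $\mathcal{Q}_1(x_n)\to \mathcal{Q}_1(x)$ and the fact that $\mathcal{Q}_1$ is a Legendre form yield $x_n\to x$ strongly in $H$, which is property (ii) for $\mathcal{Q}$.

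Combining (i) and (ii), $\mathcal{Q}$ is a Legendre form, as desired. The only point that might merit a line of explanation is the fact that weak continuity implies weak lower semicontinuity in step (i), but this is immediate from the definitions, so no genuine difficulty arises.
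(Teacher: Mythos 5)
The paper does not give its own proof of this lemma; it simply cites Hestenes \cite{Hes51}. Your argument is correct and is the standard one: it verifies the two defining properties of a Legendre form directly, using that weak continuity of $\Q_2$ lets you transfer both lower semicontinuity and the ``$\Q(x_n)\to\Q(x)$ forces strong convergence'' property from $\Q_1$ to $\Q$, so there is nothing to object to.
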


\begin{lemma}\label{chap1legendre}\cite[Theorem 3.2]{Hes51}
Consider a real interval $I$ and a quadratic form $\Q$ over the Hilbert space
$L_2(I),$ given by
\benl
\Q(y):=\int_I y^\top(t)
R(t)y(t)
\dtt.
\eenl
Then $\Q$ is weakly l.s.c. over $L_2(I)$ iff
\be\label{chap1Rsucceq0}
R(t)\succeq 0,\quad \mr{a.e.}\ \mr{on}\ I.
\ee
\end{lemma}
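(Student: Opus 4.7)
The statement is a classical Hestenes-type result, and the plan is to prove the two directions separately.

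For the easy direction ($R(t)\succeq 0$ a.e.\ implies weak l.s.c.), the idea is that $\Q(y)=\int_I y^\top R(t)y\,\dtt\ge 0$ for every $y\in L_2(I)$, so $\Q$ is a nonnegative quadratic form and hence convex. Since $R$ is (by assumption) bounded, $\Q$ is continuous in the strong topology of $L_2(I)$. A convex, strongly l.s.c.\ functional on a Banach space is weakly l.s.c.\ by Mazur's theorem, so the conclusion follows immediately.

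For the nontrivial direction, I would argue by contradiction. Suppose \eqref{chap1Rsucceq0} fails. Then there exist a vector $\xi\in\cR^m$, a measurable set $E\subset I$ of positive measure, and a constant $\delta>0$ such that
\benl
\xi^\top R(t)\xi\le -\delta,\quad \mr{for}\ \mr{a.a.}\ t\in E.
\eenl
The plan is to construct a sequence $\{y_k\}\subset L_2(I,\cR^m)$ that converges weakly to $0$ but whose values $\Q(y_k)$ stay uniformly negative, contradicting weak l.s.c.\ (since $\Q(0)=0$). Set
\benl
y_k(t):=\mathbf{1}_E(t)\sin(2\pi k t)\,\xi.
\eenl
First, one verifies that $y_k\rightharpoonup 0$ in $L_2(I,\cR^m)$: this is the Riemann--Lebesgue lemma, since for every $\phi\in L_2(I,\cR^m)$ the scalar function $t\mapsto \mathbf{1}_E(t)\phi(t)^\top\xi$ belongs to $L_2(I)$ and its Fourier coefficients tend to $0$.

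Next, I would compute $\Q(y_k)$ using $\sin^2(2\pi k t)=\tfrac{1}{2}-\tfrac{1}{2}\cos(4\pi k t)$:
\benl
\Q(y_k)=\int_E \sin^2(2\pi k t)\,\xi^\top R(t)\xi\,\dtt
=\frac{1}{2}\int_E \xi^\top R(t)\xi\,\dtt-\frac{1}{2}\int_E \cos(4\pi kt)\,\xi^\top R(t)\xi\,\dtt.
\eenl
Riemann--Lebesgue applied to the second integral (using that $t\mapsto \mathbf{1}_E(t)\xi^\top R(t)\xi$ lies in $L_1(I)$) yields
\benl
\lim_{k\to\infty}\Q(y_k)=\frac{1}{2}\int_E \xi^\top R(t)\xi\,\dtt\le -\frac{\delta\,\mr{meas}(E)}{2}<0.
\eenl
This contradicts $0=\Q(0)\le\liminf_{k\to\infty}\Q(y_k)$ required by weak lower semi-continuity, completing the proof.

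The main point that needs care is the measurability of the exceptional set $E$ together with the vector $\xi$: strictly speaking one should first fix a countable dense set of directions $\{\xi_n\}\subset\cR^m$ and the measurable sets $E_n:=\{t\in I:\xi_n^\top R(t)\xi_n<0\}$, and if \eqref{chap1Rsucceq0} fails then at least one $E_n$ has positive measure (since $R(t)\succeq 0$ iff $\xi^\top R(t)\xi\ge 0$ for a dense set of $\xi$). Once such an $E_n$ and $\xi_n$ are fixed, one may further restrict $E_n$ to a subset where $\xi_n^\top R(t)\xi_n\le -\delta$ for some $\delta>0$, and the oscillatory construction above goes through unchanged.
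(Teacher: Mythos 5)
The paper does not give its own proof of this lemma; it is cited verbatim as Theorem~3.2 of Hestenes~\cite{Hes51}. Your self-contained argument is correct and is the standard way to prove it, so there is nothing to reconcile with an internal proof.

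A few confirmations and one remark. For the easy direction, you correctly observe that positive semidefiniteness makes $\Q$ a nonnegative quadratic form, hence convex; with $R$ essentially bounded (which is the setting throughout the paper, where $R[\lambda]$ is even continuous by Lemma~\ref{chap1Rpos}) $\Q$ is norm-continuous, and a convex strongly-l.s.c.\ functional is weakly l.s.c.\ by Mazur. For the hard direction, your refinement through a countable dense set of directions $\{\xi_n\}$ is exactly the right fix to the measurability issue: failure of \eqref{chap1Rsucceq0} on a set of positive measure forces some fixed direction $\xi_n$ and some $\delta>0$ to work on a set $E$ of positive measure, which cannot be concluded for a single uniform direction without this step. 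After that, the oscillating sequence $y_k=\mathbf{1}_E\sin(2\pi k t)\,\xi_n$, Riemann--Lebesgue to get $y_k\rightharpoonup 0$ and to kill the $\cos(4\pi k t)$ term, and the resulting strictly negative limit of $\Q(y_k)$ give the contradiction with $\Q(0)\le\liminf\Q(y_k)$. One could equivalently use an orthonormal scalar sequence $\phi_k$ in $L_2(E)$ and set $y_k:=\phi_k\,\xi_n$ (which is closer to the flavor of Hestenes's argument), but the Riemann--Lebesgue route is equally valid and self-contained.
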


\if{
\begin{proof}
Define
\be
\Q_1(y):=\half \int_0^T y^\top(t)R(t)y(t)\dtt,\quad
\mr{and}\quad
\Q_2(y):=\Q(y)-\Q_1(y).
\ee
Notice that $\Q_2$ is weakly continuous.
\noindent{\bf Part a.} Let us prove that if $\Q$ is a
Legendre form then \eqref{chap1Rsucceq0} holds. If $R(t)$ is not
positive definite a.e. on $[0,T]$ then there exist
$\beta>0$ and an interval $I\subset [0,T]$ of positive
measure such that
\benl
h^\top R(t)h\leq -\beta \|h\|^2,\quad \forall h\in \cR^m,\
\mr{a.e.}\ \mr{on}\ I.
\eenl
Define $\Uspace_2^I$ the subset of $\Uspace_2$ containing the
functions with support in $[0,T]\backslash I.$ As $\Uspace_2^I$
is an infinite dimension space, there exists an orthonormal
sequence $(y_k)\subset \Uspace_2^I.$ Easily follows that
$y_k\rightharpoonup 0$ in $\Uspace_2.$ As $\Q$ is w.l.s.c. we
get
\benl
0=\Q(0)\leq \liminf \Q(y_k)=\liminf \Q_1(y_k).
\eenl
Thus
\benl
0\leq \liminf \int_I y_k^\top(t)R(t)y_k(t)\dtt\leq
\liminf-\beta \int_I|y_k(t)|^2\dtt=-\beta,
\eenl
that leads us to a contraction. We conclude that condition
\eqref{chap1Rsucceq0} holds.
\noindent{\bf Part b.} Suppose now that condition
\eqref{chap1Rsucceq0} holds. Then $\sqrt{\Q_1}$ defines a norm
on $\Uspace_2$ equivalent to $\|.\|_2,$ and thus it is a
Legendre form. As $\Q_2$ is weakly continuous, we obtain by
Lemma \ref{chap1legendre0} thus $\Q$ is a Legendre form.
\end{proof}
}\fi

\begin{lemma}\label{chap1quadform}\cite[Theorem 5]{Dmi84}
 Given a Hilbert space $H,$ and $a_1,a_2,\hdots,a_p\in H,$ set
\benl
K:=\{x\in H:(a_i,x)\leq 0,\ \mr{for}\ i=1,\hdots,p\}.
\eenl
Let $M$ be a convex and compact subset of $\cR^s,$ and let
$\{Q^{\psi}:\psi\in M\}$ be a family of continuous
quadratic forms over $H$ with  the mapping $\psi \rightarrow
Q^{\psi}$ being affine. Set $M_{\#}:=\{ \psi \in M:\ Q^{\psi}\ \mr{is}\ \mr{weakly}\
\mr{l.s.c.}\}$ and assume that
\benl
\max_{\psi\in M} Q^{\psi}(x)\geq 0,\ \mr{for}\
\mr{all}\ x\in K.
\eenl
Then
\benl
\max_{\psi\in
M_{\#}} Q^{\psi}(x)\geq 0,\ \mr{for}\ \mr{all}\ x\in
K.
\eenl
\end{lemma}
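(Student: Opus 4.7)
The plan is to argue by contradiction using a perturbation of $x_0$ in a direction that exposes the non-weak-lower-semicontinuity of the maximizer. Suppose the conclusion fails at some $x_0\in K$, so that $Q^{\psi}(x_0)< 0$ for every $\psi\in M_{\#}$; since $M_{\#}$ is closed in the compact set $M$ (a short argument using the continuity of $\psi\mapsto A^{\psi}$ in operator norm and the boundedness of weakly convergent sequences shows this), we may strengthen this to $Q^{\psi}(x_0)\leq -\delta$ on $M_{\#}$ for some $\delta>0$. By the hypothesis, however, there exists $\psi^{*}\in M$ with $Q^{\psi^{*}}(x_0)\geq 0$, and necessarily $\psi^{*}\notin M_{\#}$, so the quadratic form $Q^{\psi^{*}}$ fails to be weakly l.s.c.: there is a weakly null sequence $y_n\rightharpoonup 0$ in $H$ with $\liminf_{n}Q^{\psi^{*}}(y_n)<0$.

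Next I would adjust $y_n$ so that adding it to $x_0$ preserves membership in $K$. Let $I_0:=\{i\,:\,(a_i,x_0)=0\}$, let $V_0:=\mathrm{span}\{a_i\,:\,i\in I_0\}$ (finite-dimensional), and set $\tilde y_n:=P_{V_0^{\perp}}y_n$. Since the projection onto the finite-dimensional space $V_0$ is strongly continuous and $y_n\rightharpoonup 0$, one has $y_n-\tilde y_n\to 0$ in norm; hence $\tilde y_n\rightharpoonup 0$ and $\liminf Q^{\psi^{*}}(\tilde y_n)=\liminf Q^{\psi^{*}}(y_n)<0$. By construction $(a_i,\tilde y_n)=0$ for $i\in I_0$, and for $i\notin I_0$ the inequality $(a_i,x_0)<0$ together with $(a_i,\tilde y_n)\to 0$ gives $x_n:=x_0+t\tilde y_n\in K$ for every fixed $t>0$ and every sufficiently large $n$.

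Now apply the hypothesis to $x_n$: there exist $\psi_n\in M$ with $Q^{\psi_n}(x_n)\geq 0$. By compactness of $M$ extract a subsequence $\psi_n\to\hat\psi\in M$. Expand
\[
Q^{\psi_n}(x_n)=Q^{\psi_n}(x_0)+2t(A^{\psi_n}x_0,\tilde y_n)+t^{2}Q^{\psi_n}(\tilde y_n).
\]
Using continuity of $\psi\mapsto A^{\psi}$ in operator norm and $\tilde y_n\rightharpoonup 0$, the cross term goes to zero and $Q^{\psi_n}(\tilde y_n)=Q^{\hat\psi}(\tilde y_n)+o(1)$, so passing to the limit yields
\[
0\leq Q^{\hat\psi}(x_0)+t^{2}\liminf_{n}Q^{\hat\psi}(\tilde y_n).
\]
If $\hat\psi\in M_{\#}$ this forces $Q^{\hat\psi}(x_0)\geq -t^{2}\liminf Q^{\hat\psi}(\tilde y_n)\geq 0$, contradicting $Q^{\hat\psi}(x_0)\leq -\delta$. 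This step closes the argument in the generic case.

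The main obstacle will be ensuring $\hat\psi\in M_{\#}$, because a priori the limit multiplier could again fail weak lower semicontinuity. I would handle this by exploiting the finite-dimensionality of $M$: the set $M\setminus M_{\#}$ is an open, affinely structured subset of a finite-dimensional compact, and the construction above can be iterated with $\hat\psi$ replacing $\psi^{*}$. At each step one either obtains the desired $\hat\psi\in M_{\#}$ (and a contradiction) or a new multiplier that lies strictly closer, in a finite-dimensional sense, to the convex set $M_{\#}$ (since $M_{\#}$ is convex in $M$ because the condition $A^{\psi}\succeq 0$ modulo compacts is convex in the affine parameter $\psi$). A compactness/separation argument — separating $M_{\#}$ from the compact convex set $\{\psi\in M:Q^{\psi}(x_0)\geq 0\}$ by an affine functional and reducing to the face on which this functional is maximized — should reduce the problem to a lower-dimensional instance of the same lemma, permitting an induction on $\dim M$. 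This dimensional reduction, together with the perturbation argument above, is the delicate part of the proof.
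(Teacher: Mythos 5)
Your approach differs substantially from Dmitruk's. Dmitruk's proof (which the paper cites and whose mechanism is reused in the proof of Theorem~\ref{chap1goh}) is a primal--dual argument: to each bounded weakly null sequence $(x_k)$ in $H$ one associates the affine functional $r(\psi):=\lim_k Q^{\psi}(x_k)$ on $M$; the collection $\R$ of such $r$ is shown to be a convex cone; one proves, using a Hoffman-lemma correction to stay in $K$, that $\inf_{r\in\R}\max_{\psi\in M}(Q^{\psi}(x)+r(\psi))\geq 0$; a minimax theorem swaps $\inf$ and $\max$; and the inner infimum is $0$ precisely when $r(\psi)\geq 0$ for all $r\in\R$, i.e.\ when $\psi\in M_{\#}$, and $-\infty$ otherwise. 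You instead attempt a direct perturbation-and-contradiction argument, which is a genuinely different route.

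However, your central step does not close. From $0\leq Q^{\hat\psi}(x_0)+t^{2}\liminf_n Q^{\hat\psi}(\tilde y_n)$ you assert $Q^{\hat\psi}(x_0)\geq -t^{2}\liminf_n Q^{\hat\psi}(\tilde y_n)\geq 0$, but the second inequality would require $\liminf_n Q^{\hat\psi}(\tilde y_n)\leq 0$, whereas if $\hat\psi\in M_{\#}$ weak lower semicontinuity together with $\tilde y_n\rightharpoonup 0$ gives the opposite sign, $\liminf_n Q^{\hat\psi}(\tilde y_n)\geq Q^{\hat\psi}(0)=0$. Thus the display only yields that $Q^{\hat\psi}(x_0)$ is bounded below by a nonpositive quantity, which is entirely compatible with $Q^{\hat\psi}(x_0)\leq-\delta$; no contradiction follows. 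The negative liminf you constructed holds for $\psi^{*}$, while $\hat\psi$ is an unrelated limit of the new multipliers $\psi_n$, and nothing ties the two together. Beyond this, the dimensional-reduction/induction meant to guarantee $\hat\psi\in M_{\#}$ — which you correctly flag as the delicate point — is only a plan, and it is not explained how the perturbing sequence and the limit multiplier can be controlled simultaneously across iterations. As written the proof has a genuine gap, and it is precisely the difficulty that Dmitruk's $\R$-cone and minimax argument is designed to circumvent.
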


\if{
\textbf{Part a.} Let us consider a sequence $(x_k)\subset
H$ such that $x_k\rightharpoonup 0,$ and
$(Q^{\psi}(x_{k}))$ converges for every $\psi\in M. $ We
can see that every sequence $(x_k)$ that converges strongly
to 0 satisfies this hypothesis. We associate to the
sequence a mapping $r:M\rightarrow \cR$ such that
$r(\psi)=\lim Q^{\psi}(x_k).$ We define the set $\R\subset
\cR$ as the set of these limits points.

Let us prove that $\R$ is a closed covex cone.

Closedness follows easily.

$\R$ is a cone because if a sequence $(x_k)$ converges
weakly to 0 and the $(Q^{\psi}(x_k))$ exists then any
positive multiple of $(x_k)$ satisfy the same properties.

Let us prove that it is convex. It remains only to prove
that the sum of 2 element of $\R$ is in $\R.$ Let us
consider $r,\rho\in \R$ such that to $r$ the sequence
$(x_k)$ is associated and $\rho$ we associate $(y_k).$ Let
$\psi\in M$ and consider $a^{\psi}$ the bilinear form
associated to $Q^{\psi}.$ There exists a sequence $(j_k)$
such that
\be
|a^{\psi}(x_k,y_{j_k})|\leq 2^{-k}.
\ee
Let us define $z_k:=x_k+y_{j_k}.$ Then $z_k\rightharpoonup
0,$ and
\be
Q^{\psi}(z_k)=Q^{\psi}(x_k)+2a^{\psi}(x_k,y_{j_k})+Q^{\psi}(y_{j_k})\rightarrow
r(\psi)+\rho(\psi).
\ee

We have then that there exists $(z_k)$ such that
$z_k\rightharpoonup 0,$ and
\benl
\lim Q^{\psi}(z_k)=r+\rho.
\eenl
We can conclude that $\R$ is convex.

\textbf{Part b.} We want to prove now that
\benl
\inf_{r\in \R}\max_{\psi \in M}(Q^{\psi}(x)+r(\psi))\geq
0,\quad x\in K.
\eenl
In order to achieve this take $x\in K$ and $r\in \R$ with
associated sequence $(x_k).$ Define $y_k:=x+x_k$ that
converges weakly to $x.$
\benl
Q^{\psi}(y_k)=Q^{\psi}(x)+2a^{\psi}(x,x_k)+Q(x_k)\rightarrow
Q^{\psi}(x)+r(\psi).
\eenl
By Hoffman Lemma, given $x\in K$ there exists a sequence
$(z_k)\subset K$ such that $y_k-z_k\rightarrow 0.$By
hypothesis
\benl
0\leq \max_{\psi\in M} Q^{\psi}(z_k)=\max_{\psi\in
M}\left[Q^{\psi}(y_k)+2a^{\psi}(y_k,z_k-y_k)+Q^{\psi}(z_k-y_k)
\right].
\eenl
As $M$ is compact we can take limit in the last expression
and obtain
\benl
0\leq \max_{\psi\in M}\left[Q^{\psi}(x)+r(\psi)\right].
\eenl
As the sequence $(x_k)$ considered is arbitrary we obtain
\benl
\inf_{r\in \R}\max_{\psi \in M}(Q^{\psi}(x)+r(\psi))\geq
0,\quad x\in K.
\eenl

\textbf{Part c.} We can apply the Minmax Theorem in \cite{Roc70} and get
\benl
\sup_{\psi\in M}\inf_{r\in
\R}\left[Q^{\psi}(x)+r(\psi)\right]\geq 0.
\eenl
Given $\psi\in M:$
\benl
\inf_{r\in \R}r(\psi)=
\left\{
\ba{cl}
0&\mr{si}\ r(\psi)\geq 0,\ \forall r,\\
\\
-\infty&\mr{si}\ \exists r/r(\psi)<0.
\ea
\right.
\eenl
Taking $x=0,$ $\sup_{\psi\in M}\inf_{r\in \R}r(\psi)\geq
0.$ Then it must exist $\psi\in M$ such that $r(\psi)\geq
0$ for every $r\in \R.$
We obtain for this $\psi:$
\benl
Q^{\psi}(0)= 0 \leq \inf_{r\in \R}r(\psi) = \liminf
Q^{\psi}(x_k).
\eenl
}\fi

The following result is an adaptation of Lemma 6.5 in \cite{Dmi87}.

\begin{lemma}\label{chap1etaPont}
Consider a sequence $\{v_k\}\subset \Uspace$ and $\{y_k\}$ their primitives defined by \eqref{chap1yxi}. Call $u_k:=\uh+v_k,$  $x_k$ its corresponding solution of
\eqref{chap1stateeq}, and let $z_k$ denote the linearized state
corresponding to $v_k,$ i.e. the solution of \eqref{chap1lineareq}.
Define, for each $k\in \cN,$
\be\label{chap1eta}
\delta x_k:=x_k-\xh,\quad \eta_k:=\delta x_k-z_k, \quad \gamma_k:=\gamma(y_k,y_k(T)).
\ee
Suppose that $\{v_k\}$ converges to 0 in the Pontryagin sense. Then
\begin{itemize}
\item[(i)]
\be\label{chap1etadot}
\dot\eta_k
=\sum_{i=0}^m\uh_if'_i(\xh)\eta_k+\sum_{i=1}^m
v_{i,k}f_i'(\xh)\delta x_k+\zeta_k,
\ee
\be\label{chap1deltaxdot}
\dot\delta x_k=\sum_{i=0}^m u_{i,k}f_i'(\xh)\delta
x_k+\sum_{i=1}^m v_{i,k}f_i(\xh)
+\zeta_k,
\ee
where $\|\zeta_k\|_{2}\leq o(\sqrt{\gamma_k})$ and $\|\zeta_k\|_{\infty}\rightarrow 0,$
\item[(ii)] $\|\eta_k\|_{\infty}\leq o(\sqrt{\gamma_k}).$
\end{itemize}
\end{lemma}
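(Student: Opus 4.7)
The plan is to obtain the two differential equations in (i) by a second-order Taylor expansion of each $f_i$ about $\xh$, and then to bound the remainder term $\zeta_k$ in a way sharp enough to run Gronwall's inequality for (ii).

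First I would write, using the state equation for both $x_k$ and $\xh$,
\[
\dot{\delta x}_k=\sum_{i=0}^m u_{i,k}f_i(x_k)-\sum_{i=0}^m\uh_i f_i(\xh)
=\sum_{i=0}^m u_{i,k}f'_i(\xh)\delta x_k+\sum_{i=1}^m v_{i,k}f_i(\xh)+\zeta_k,
\]
where
\[
\zeta_k(t):=\sum_{i=0}^m u_{i,k}(t)\bigl[f_i(x_k(t))-f_i(\xh(t))-f'_i(\xh(t))\delta x_k(t)\bigr].
\]
This gives the second identity in (i). Subtracting the linearised equation $\dot z_k=\sum \uh_i f'_i(\xh)z_k+\sum v_{i,k}f_i(\xh)$ and regrouping the terms $\sum u_{i,k}f'_i(\xh)\delta x_k=\sum\uh_i f'_i(\xh)\eta_k+\sum\uh_i f'_i(\xh)z_k+\sum v_{i,k}f'_i(\xh)\delta x_k$ yields the first identity in (i).

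Next I would estimate $\zeta_k$. Since $f_i$ is $C^2$, the Taylor remainder satisfies $|f_i(x_k)-f_i(\xh)-f'_i(\xh)\delta x_k|\le C|\delta x_k|^2$ on a neighbourhood of the reference trajectory, and since $\|u_k\|_\infty\le\|\uh\|_\infty+N$, we get
\[
\|\zeta_k\|_\infty\le C'\|\delta x_k\|_\infty^2,\qquad
\|\zeta_k\|_2^2\le C''\|\delta x_k\|_\infty^2\,\|\delta x_k\|_2^2.
\]
The crucial input is that $\|\delta x_k\|_\infty\to 0$: applying Lemma \ref{chap1lemmabound2} (whose proof decomposes $\delta x_k=\xi_k+By_k$, with Gronwall giving $\|\xi_k\|_\infty\le C\|y_k\|_2$) together with $\|y_k\|_\infty\le\|v_k\|_1\to 0$ shows that $\|\delta x_k\|_\infty\to 0$ while $\|\delta x_k\|_2^2=O(\gamma_k)$. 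Combining,
\[
\|\zeta_k\|_2=o(\sqrt{\gamma_k}),\qquad \|\zeta_k\|_\infty\to 0,
\]
which settles (i).

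For (ii), apply Gronwall to the linear equation for $\eta_k$, which has the form $\dot\eta_k=A(t)\eta_k+r_k$ with $\eta_k(0)=0$ and $r_k:=\sum_{i=1}^m v_{i,k}f'_i(\xh)\delta x_k+\zeta_k$. Then $\|\eta_k\|_\infty\le e^{T\|A\|_\infty}\|r_k\|_1$. For the first piece of $r_k$, the Cauchy--Schwarz inequality and the Pontryagin bound $\|v_k\|_\infty\le N$ give
\[
\|v_{i,k}f'_i(\xh)\delta x_k\|_1\le C\|v_k\|_2\|\delta x_k\|_2\le C\sqrt{\|v_k\|_\infty\|v_k\|_1}\cdot O(\sqrt{\gamma_k})=o(\sqrt{\gamma_k}),
\]
and for $\zeta_k$ we have $\|\zeta_k\|_1\le\sqrt T\,\|\zeta_k\|_2=o(\sqrt{\gamma_k})$. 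Hence $\|\eta_k\|_\infty=o(\sqrt{\gamma_k})$.

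The only real subtlety is the sharpness of the bound on $\zeta_k$. A naive use of $\|\delta x_k\|_\infty=O(1)$ would lose the $o$ and yield only an $O(\sqrt{\gamma_k})$ estimate on $\eta_k$, which is insufficient for Lemma \ref{chap1lemmasc2}. The key observation that rescues the proof is that Pontryagin convergence of $v_k$ gives $\|y_k\|_\infty\to 0$, which upgrades the $L_2$ bound of Lemma \ref{chap1lemmabound2} into $\|\delta x_k\|_\infty\to 0$ and hence produces the required $o(\sqrt{\gamma_k})$.
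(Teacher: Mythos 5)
Your proposal is correct and follows essentially the same route as the paper: a Taylor expansion of $f_i$ about $\xh$ to produce the two ODEs in (i), then Gronwall together with $\|v_k\|_2\to 0$ and Lemma~\ref{chap1lemmabound2} for (ii), with the Pontryagin bound $\|y_k\|_\infty\le\|v_k\|_1\to 0$ supplying the crucial $\|\delta x_k\|_\infty\to 0$. The only (cosmetic) difference is that you bound $\|\zeta_k\|_2$ via $\|\zeta_k\|_2\le C\|\delta x_k\|_\infty\|\delta x_k\|_2$, which is a slightly cleaner statement of the estimate the paper writes more loosely as $const.\|u_k\|_\infty\|\delta x_k\|_2^2 = O(\gamma_k)$, and you use the full first-order Taylor remainder rather than the paper's explicit second-order expansion; both yield the same $o(\sqrt{\gamma_k})$.
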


\begin{proof}
\textbf{(i,ii)}
Consider the second order  Taylor expansions of
$f_i,$
\benl
f_i(x_k)=f_i(\xh)+f'_i(\xh)\delta x_k+\half
f_i''(\xh)(\delta x_k,\delta x_k)+o(|\delta x_k(t)|^2).
\eenl
We can write
\be \label{chap1deltaxk1}
\dot\delta x_k=\sum_{i=0}^m u_{i,k}f_i'(\xh)\delta
x_k+\sum_{i=1}^m v_{i,k}f_i(\xh)
+\zeta_k,
\ee
with
\be\label{chap1zeta}
\zeta_k:=\half \sum_{i=0}^m u_{i,k}f_i''(\xh)(\delta
x_k,\delta x_k)+o(|\delta x_k(t)|^2)\sum_{i=0}^m u_{i,k}.
\ee
As $\{u_k\}$ is bounded in $L_{\infty}$ and $\|\delta x_k\|_{\infty}\rightarrow 0,$ we get $\|\zeta_k\|_{\infty}\rightarrow 0$ and the
following $L_2-$norm bound:
\be \label{chap1zetak1}
\begin{split}
\|\zeta_k\|_{2}
&\leq const.\sum_{i=0}^m\|u_{i,k}(\delta x_k,\delta
x_k)\|_2+o(\gamma_k)\|\sum_{i=0}^m u_{i,k}\|_1\\
&\leq const.\|u_k\|_{\infty}\|\delta
x_k\|_2^2= O({\gamma_k})\leq o(\sqrt{\gamma_k}).
\end{split}
\ee
Let us look for the differential equation of $\eta_k$
defined in \eqref{chap1eta}. By \eqref{chap1deltaxk1}, and adding and
substracting the term $\sum_{i=1}^m \uh_if_i'(\xh)\delta
x_k$ we obtain
\benl
\dot\eta_k
=\sum_{i=0}^m\uh_if'_i(\xh)\eta_k+\sum_{i=1}^m
v_{i,k}f_i'(\xh)\delta x_k+\zeta_k.
\eenl
Thus we obtain \textbf{(i).} Applying Gronwall's Lemma to this last differential
equation we get
\be\label{chap1etak1}
\|\eta_k\|_{\infty}\leq \|\sum_{i=1}^m
v_{i,k}f'_i(\xh)\delta x_k+\zeta_k\|_1.
\ee
Since $\|v_k\|_{\infty}<N$ and $\|v_k\|_1\rightarrow 0,$ we
also find that $\|v_k\|_2\rightarrow 0.$ Applying the Cauchy-Schwartz
inequality to \eqref{chap1etak1}, from \eqref{chap1zetak1} we get \textbf{(ii).}
\if{
From items \textbf{(i)} and \textbf{(ii)} we ontain easily  item \textbf{(iii).}
\noindent\textbf{(iv)} Observe that
\be\label{chap1Huxx1}
\intT (H_{uxx}\lam\delta x_k,\delta x_k,v_k)\dtt
=a_k+2b_k+c_k,
\ee
where
\be
a_k:=\intT (H_{uxx}\lam z_k,z_k,v_k)\dtt,\ b_k:=\intT (H_{uxx}\lam z_k,\eta_k,v_k)\dtt,
\ee
\be
c_k:=\int (H_{uxx}\lam\eta_k,\eta_k,v_k)\dtt.
\ee
Let us prove that $a_k\leq o(\gamma_k).$ Write
\be\label{chap1ak}
a_k=\intT  [(H_{uxx}\lam\xi_k,\xi_k,v_k)+2(H_{uxx}\lam\xi_k,By_k,v_k)+(H_{uxx}\lam By_k,By_k,v_k)]\dtt.
\ee
For the first term in \eqref{chap1ak} obtain
\begin{align}
\intT& (H_{uxx}
\lam\xi_k,\xi_k,v_k)\dtt
= [(H_{uxx}\lam\xi_k,\xi_k,y_k)]_0^T \\
&-\intT (\dot{H}_{uxx}\lam \xi_k,\xi_k,y_k)\dtt-2\intT  (H_{uxx}\lam(A\xi_k+B_1y_k),\xi_k,y_k)\dtt<o(\gamma_k).
\label{chap1ak1}\end{align}
For the second term in \eqref{chap1ak} we get
\begin{align}
\intT  &(H_{uxx}\lam\xi_k,By_k,v_k)\dtt
=\intT \xi^{\top}_kH_{uxx}\lam B\ddt\frac{y_k^2}{2}\dtt\\
&=[\xi^{\top}_kH_{uxx}\lam B \frac{y_k^2}{2}]_0^T
-\intT \ddt(\xi^{\top}_kH_{uxx}\lam B)\frac{y_k^2}{2}\dtt< o(\gamma_k),
\label{chap1ak2}\end{align}
where for the integrand we get
\benl
\ddt(\xi^{\top}_kH_{uxx}\lam B)=(\ddt \xi_k)^{\top}H_{uxx}\lam B+\xi^{\top}_k\ddt(H_{uxx}\lam B)\rightarrow 0\ \mr{in}\ L_{\infty},
\eenl
as $\|\xi_k\|_{\infty}\rightarrow 0,$ $\|y_k\|_{\infty}\rightarrow 0,$ and $H_{uxx}\lam B$ is a bounded variation function. For the third term in \eqref{chap1ak} we get
\begin{align}
\intT & (H_{uxx}\lam By_k,By_k,v_k)\dtt
=\intT B^{\top}H_{uxx}\lam B \ddt \frac{y_k^3}{3}\dtt\\
&=[B^{\top}H_{uxx}\lam B \frac{y_k^3}{3}]_0^T
-\intT \ddt(B^{\top}H_{uxx}\lam B)\frac{y_k^3}{3}\dtt <o(\gamma_k).
\label{chap1ak3}\end{align}
From \eqref{chap1ak1},\eqref{chap1ak2} and \eqref{chap1ak3} follows $a_k\leq (\gamma_k).$
Consider $b_k.$  Write
\be\label{chap1bk1}
b_k=\intT [(H_{uxx}\lam \xi_k,\eta_k,v_k)+(H_{uxx}\lam By_k,\eta_k,v_k)]\dtt.
\ee
Obtain for the first term in the integrand:
\begin{align}
\intT &(H_{uxx}\lam \xi_k,\eta_k,v_k)\dtt
=[(H_{uxx}\lam \xi_k,\eta_k,v_k)]_0^T
-\intT (\dot{H}_{uxx}\lam\xi_k,\eta_k,v_k)\dtt\\
&-\intT (H_{uxx}\lam(A\xi_k+B_1 y_k),\eta_k,y_k)\dtt
-\intT (H_{uxx}\lam \xi_k,\dot{\eta}_k,y_k)\dtt<o(\gamma_k),
\end{align}
where we used item \textbf{(iii)} for the last inequality. Go back to \eqref{chap1bk1} and get
\begin{align}
\intT (H_{uxx}\lam By_k,&\eta_k,v_k)\dtt
=\intT B^{\top}H_{uxx}\lam \eta_k\ddt \frac{y^2_k}{2}\dtt\\
&=[B^{\top}H_{uxx}\lam \eta_k \frac{y_k^2}{2}]_0^T
-\intT \ddt(B^{\top}H_{uxx}\eta_k)\frac{y_k^2}{2}\dtt<o(\gamma_k),
\end{align}
as $\ddt(B^{\top}H_{uxx}\eta_k)\rightarrow 0$ in $L_{\infty}$ in light of item \textbf{(iii)}. Thus $b_k\leq o(\gamma_k).$ For $c_k$ we get
\begin{align}
c_k=&\intT (H_{uxx}\lam \eta_k,\eta_k,v_k)\dtt
=[ (H_{uxx}\lam\eta_k,\eta_k,y_k)]_0^T\\
&-\intT (\dot{H}_{uxx}\lam \eta_k,\eta_k,y_k)\dtt
-2\intT (H_{uxx}\lam\dot{\eta}_k,\eta_k,y_k)\dtt
\leq o(\gamma_k).
\end{align}
Then $c_k\leq o(\gamma_k)$ as well and from \eqref{chap1Huxx1} we get item \textbf{(iv)}.
}\fi

\end{proof}

\section*{Acknowledgments}
The authors thank Professor Helmut Maurer for his useful remarks.


\medskip
Received July 2011; 1$^{st}$ revision February 2012; 2$^{nd}$ revision June 2012.
\medskip


\begin{thebibliography}{99}



\bibitem{AgrGam76}
\newblock A. A. Agrachev and R. V. Gamkrelidze,
\newblock \emph{Second order optimality principle for a time-optimal problem},
\newblock Math. USSR, Sbornik, \textbf{100}, (1976).

\bibitem{AgrSac} 
\newblock A. A. Agrachev and Y. L. Sachkov,
\newblock ``Control theory from the geometric viewpoint,"
\newblock volume~\textbf{87} of  Encyclopaedia of Mathematical Sciences,
 Springer-Verlag, Berlin, 2004.

\bibitem{ASZ02} 
\newblock A. A. Agrachev, G.~Stefani and P. L. Zezza,
\newblock \emph{Strong optimality for a bang-bang trajectory},
\newblock SIAM J. Control and Optimization, \textbf{41} (2002), 991--1014.

\bibitem{AleTikFom79}
\newblock  V. M. Alekseev, V. M. Tikhomirov and S. V. Fomin,
\newblock ``Optimal Control,"
\newblock Nauka, Moscow, 1979,
 in {R}ussian.

\bibitem{ABM11}
\newblock  M. S. Aronna, J. F. Bonnans and P. Martinon,
\newblock \emph{A shooting algorithm for problems with singular arcs},
\newblock INRIA Research Rapport Nr. 7763, October 2011,

\bibitem{BCT05b} 
\newblock B. Bonnard, J. B. Caillau and E. Tr{\'e}lat,
\newblock \emph{Geometric optimal control of elliptic {K}eplerian orbits},
\newblock Discrete Contin. Dyn. Syst. Ser. B, \textbf{5} (2005), 929--956 (electronic).

\bibitem{BosPic}
\newblock  U.~Boscain and B.~Piccoli,
\newblock ``Optimal Syntheses for Control Systems on 2-{D} Manifolds,"
\newblock Springer-Verlag, Berlin, 2004.

\bibitem{BryHo} 
\newblock  A. E. Bryson, Jr. and Y. C. Ho,
\newblock ``Applied optimal control,"
\newblock Hemisphere Publishing Corp. Washington, D. C., 1975.

\bibitem{CJT06} 
\newblock Y.~Chitour, F.~Jean and E.~Tr{\'e}lat,
\newblock \emph{Genericity results for singular curves},
\newblock J. Differential Geom., \textbf{73} (2006), 45--73.

\bibitem{CJT08} 
\newblock  Y.~Chitour, F.~Jean and E.~Tr{\'e}lat,
\newblock \emph{Singular trajectories of control-affine systems},
\newblock SIAM J. Control Optim., \textbf{47} (2008), 1078--1095.

\bibitem{ComPen97} 
\newblock  R.~Cominetti and J.-Penot,
\newblock \emph{Tangent sets of order one and two to the positive cones of some functional spaces},
\newblock Applied Mathematics and Optimization, \textbf{36} (1997), 291--312.

\bibitem{Dmi77}
\newblock A. V. Dmitruk,
\newblock \emph{Quadratic conditions for a weak minimum for singular regimes in optimal control problems},
\newblock Soviet Math. Doklady, \textbf{18} (1977).

\bibitem{Dmi83} 
\newblock A. V. Dmitruk,
\newblock \emph{Quadratic conditions for a {P}ontryagin minimum in an optimal control problems, linear in the control, with a constraint on the control},
\newblock Dokl. Akad. Nauk SSSR, \textbf{28} (1983), 364--368.

\bibitem{Dmi84}
\newblock A. V. Dmitruk,
\newblock \emph{Jacobi-type conditions for the problem of {B}olza with inequalities},
\newblock Math. Notes, \textbf{35} (1984), 427--435.

\bibitem{Dmi87}
\newblock A. V. Dmitruk,
\newblock \emph{Quadratic order conditions for a {P}ontryagin minimum in an optimal control problem linear in the control},
\newblock Math. USSR Izvestiya, \textbf{28} (1987), 275--303.

\bibitem{Dmi08} 
\newblock  A. V. Dmitruk,
\newblock \emph{Jacobi type conditions for singular extremals},
\newblock Control \& Cybernetics, \textbf{37} (2008), 285--306.

\bibitem{DmiShi10} 
\newblock  A. V. Dmitruk and K. K. Shishov,
\newblock \emph{Analysis of a quadratic functional with a partly singular Legendre condition},
\newblock Moscow University Comput. Math. and Cybernetics, \textbf{34} (2010), 16--25.

\bibitem{Dub57} 
\newblock  L. E. Dubins,
\newblock \emph{On curves of minimal length with a constraint on average curvature, and with prescribed initial and terminal positions and tangents},
\newblock Amer. J. Math., \textbf{79} (1957), 497--516.

\bibitem{DubMil}
\newblock  A. Ya. Dubovitskii and A. A. Milyutin,
\newblock \emph{Extremum problems with constraints},
\newblock USSR Comp. Math. and Math. Phys., \textbf{5} (1965), 1--80.

\bibitem{DunSch58} 
\newblock  N. Dunford and J.~Schwartz,
\newblock ``Linear Operators, Vol I,"
\newblock Interscience, New York, 1958.

\bibitem{Fel03}
\newblock U. Felgenhauer,
\newblock \emph{On stability of bang-bang type controls},
\newblock SIAM J. Control Optim., \textbf{41} (2003), 1843--1867 (electronic).

\bibitem{Fel04} 
\newblock  U. Felgenhauer,
\newblock \emph{Optimality and sensitivity for semilinear bang-bang type optimal control problems},
\newblock Int. J. Appl. Math. Comput. Sci., \textbf{14} (2004), 447--454.

\bibitem{Fel05} 
\newblock  U. Felgenhauer,
\newblock \emph{Optimality properties of controls with bang-bang components in problems with semilinear state equation},
\newblock Control Cybernet., \textbf{34} (2005), 763--785.

\bibitem{GabKir72} 
\newblock R. Gabasov and F. M. Kirillova,
\newblock \emph{High-order necessary conditions for optimality},
\newblock J. SIAM Control, \textbf{10} (1972), 127--168.

\bibitem{GRR08} 
\newblock P.~Gajardo, H.~Ram{\'{\i}}rez~C. and A.~Rapaport,
\newblock \emph{Minimal time sequential batch reactors with bounded and impulse
  controls for one or more species},
\newblock SIAM J. Control Optim., \textbf{47} (2008), 2827--2856.

\bibitem{Goh66} 
\newblock B. S. Goh,
\newblock \emph{Necessary conditions for singular extremals involving multiple
  control variables},
\newblock J. SIAM Control, \textbf{4} (1966), 716--731.

\bibitem{Goh66a} 
\newblock  B.S. Goh,
\newblock \emph{The second variation for the singular {B}olza problem},
\newblock J. SIAM Control, \textbf{4} (1966), 309--325.

\bibitem{Hes51} 
\newblock  M. R. Hestenes,
\newblock \emph{Applications of the theory of quadratic forms in {H}ilbert space to the calculus of variations},
\newblock Pacific J. Math., \textbf{1} (1951), 525--581.

\bibitem{Hof52} 
\newblock A.~Hoffman,
\newblock \emph{On approximate solutions of systems of linear inequalities},
\newblock Journal of Research of the National Bureau of Standards, Section B, Mathematical Sciences, \textbf{49} (1952), 263--265.

\bibitem{JacSpe71} 
\newblock D. H. Jacobson and J. L. Speyer,
\newblock \emph{Necessary and sufficient conditions for optimality for singular
  control problems: {A} limit approach},
\newblock J. Math. Anal. Appl., \textbf{34} (1971), 239--266.

\bibitem{JacLelSpe71} 
\newblock D. H. Jacobson, M. M. Lele and J. L. Speyer,
\newblock \emph{New necessary conditions of optimality for control problems with state-variable inequality constraints},
\newblock Journal of Mathematical Analysis and Applications, \textbf{35} (1971), 255--284.

\bibitem{Kel64} 
\newblock H. J. Kelley,
\newblock \emph{A second variation test for singular extremals},
\newblock AIAA Journal, \textbf{2} (1964), 1380--1382.

\bibitem{KelKopMoy67} 
\newblock H. J. Kelley, R. E. Kopp and H. G. Moyer,
\newblock \emph{Singular extremals},
\newblock in ``Topics in Optimization", Academic Press, New York, (1967), 63--101.

\bibitem{KopMoy65}
\newblock R. E. Kopp and H. G. Moyer,
\newblock \emph{Necessary conditions for singular extremals},
\newblock AIAA Journal, \textbf{3} (1965), 1439--1444.

\bibitem{Kre77} 
\newblock A. J. Krener,
\newblock \emph{The high order maximal principle and its application to singular
  extremals},
\newblock SIAM J. on Control, \textbf{15} (1977), 256--293.

\bibitem{KurZow} 
\newblock S.~Kurcyusz and J.~Zowe,
\newblock \emph{Regularity and stability for the mathematical programming problem in
  {B}anach spaces},
\newblock in ``Applied Mathematics and Optimization", Springer, New York, (1979), 49--62.

\bibitem{LedSch12}
\newblock U.~Ledzewicz and H.~Sch{\"a}ttler,
\newblock \emph{Multi-input optimal control problems for combined tumor   anti-angiogenic and radiotherapy treatments},
\newblock Journal of Optimization Theory and Applications,
 published as `online first' [to appear 2012].

\bibitem{LMO} 
\newblock  E.~S. Levitin, A.~A. Milyutin and N.~P. Osmolovski{\u\i},
\newblock \emph{Higher order conditions for local minima in problems with constraints},
\newblock Uspekhi Mat. Nauk, \textbf{33} (1978), 85--148, 272.

\bibitem{Mar87}
\newblock  A. A. Markov,
\newblock \emph{Some examples of the solution of a special kind of problem on
  greatest and least quantities},
\newblock Soobshch. Karkovsk. Mat. Obshch., \textbf{1} (1887), 250--276,
 (in Russian).

\bibitem{Mau76} 
\newblock H.~Maurer,
\newblock \emph{Numerical solution of singular control problems using multiple shooting techniques},
\newblock J. of Optimization theory and applications, \textbf{18} (1976), 235--257.

\bibitem{MauOsm03b}
\newblock  H.~Maurer and N. P. Osmolovskii,
\newblock \emph{Second order optimality conditions for bang-bang control problems},
\newblock Control and Cybernetics, \textbf{32} (2003), 555--584.

\bibitem{MauOsm03} 
\newblock H. Maurer and N. P. Osmolovskii,
\newblock \emph{Second order sufficient conditions for time-optimal bang-bang control},
\newblock SIAM J. Control Optim., \textbf{42} (2003), 2239--2263.

\bibitem{Mil81} 
\newblock A. A. Milyutin,
\newblock \emph{On quadratic conditions for an extremum in smooth problems with a finite-dimensional range},
\newblock Methods of the Theory of Extremal Problems in Economics, (1981), 138--177.

\bibitem{MilOsm98} 
\newblock A. A. Milyutin and N. P. Osmolovskii,
\newblock ``Calculus of variations and optimal control,"
\newblock American Mathematical Society, 1998.

\bibitem{Moy73} 
\newblock  H. G. Moyer,
\newblock \emph{Sufficient conditions for a strong minimum in singular control
  problems},
\newblock SIAM J. Control, \textbf{11} (1973), 620--636.

\bibitem{Osm04} 
\newblock N. P. Osmolovskii,
\newblock \emph{Quadratic extremality conditions for broken extremals in the general
  problem of the calculus of variations},
\newblock J. Math. Sci. (N. Y.), \textbf{123} (2004), 3987--4122.

\bibitem{PogSpa11} 
\newblock L. Poggiolini and M. Spadini,
\newblock \emph{Strong local optimality for a bang-bang trajectory in a {M}ayer
  problem},
\newblock SIAM J. Control Optimization, \textbf{49} (2011), 140--161.

\bibitem{PogSte05}
\newblock L. Poggiolini and G. Stefani,
\newblock \emph{On second order sufficient conditions for a bang-singular arc},
\newblock Proceedings of science - SISSA, (2005).

\bibitem{PogSte07} 
\newblock  L.~Poggiolini and G.~Stefani,
\newblock \emph{Minimum time optimality of a partially singular arc: second order conditions},
\newblock In ``Lagrangian and {H}amiltonian methods for nonlinear control 2006 (Lecture Notes in Control and Inform. Sci.), Springer, Berlin, \textbf{366} (2007), 281--291.

\bibitem{PogSte08} 
\newblock  L.~Poggiolini and G.~Stefani,
\newblock \emph{Sufficient optimality conditions for a bang-singular extremal in the
  minimum time problem},
\newblock Control Cybernet., \textbf{37} (2008), 469--490.

\bibitem{Roc70} 
\newblock R. T. Rockafellar,
\newblock ``Convex Analysis,"
\newblock Princeton University Press, Princeton, New Jersey, 1970.

\bibitem{Rud87} 
\newblock W.~Rudin,
\newblock ``Real and complex analysis,"
\newblock Mc Graw-Hill, New York, 1987.

\bibitem{Sar97} 
\newblock A. V. Sarychev,
\newblock \emph{First- and second-order sufficient optimality conditions for bang-bang controls},
\newblock SIAM J. Control Optim., \textbf{3} (1997), 565--588.

\bibitem{Sch91} 
\newblock H.~Sch{\"a}ttler,
\newblock \emph{A local feedback synthesis of time-optimal stabilizing controls in
  dimension three},
\newblock Math. Control Signals Systems, \textbf{4} (1991), 293--313.

\bibitem{SchJan93} 
\newblock H.~Sch{\"a}ttler and M.~Jankovic,
\newblock \emph{A synthesis of time-optimal controls in the presence of saturated singular arcs},
\newblock Forum Math., \textbf{5} (1993), 203--241.

\bibitem{SouLau96} 
\newblock P.~Sou{\`e}res and J. P. Laumond,
\newblock \emph{Shortest paths synthesis for a car-like robot},
\newblock  IEEE Trans. Automat. Control, \textbf{41} (1996), 672--688.

\bibitem{Sus87c} 
\newblock H. J. Sussmann,
\newblock \emph{Regular synthesis for time-optimal control of single-input real analytic systems in the plane},
\newblock SIAM J. Control Optim., \textbf{25} (1987), 1145--1162.

\bibitem{Sus87b} 
\newblock H. J. Sussmann,
\newblock \emph{The structure of time-optimal trajectories for single-input systems
  in the plane: the {$C^\infty$} nonsingular case},
\newblock SIAM J. Control Optim., \textbf{25} (1987), 433--465.

\bibitem{Sus87} 
\newblock H. J. Sussmann,
\newblock \emph{The structure of time-optimal trajectories for single-input systems
  in the plane: the general real analytic case},
\newblock SIAM J. Control Optim., \textbf{25}(1987), 868--904.

\bibitem{SusTan91}
\newblock H. J. Sussmann and G.~Tang,
\newblock \emph{Shortest paths for the reeds-shepp car: A worked out example of the
  use of geometric techniques in nonlinear optimal control},
\newblock Rutgers Center for Systems and Control Technical  Report 91-10, 1991.

\end{thebibliography}
\end{document}